\theoremstyle{definition}
\theoremstyle{theorem}
\newtheorem{theorem}{Theorem}
\newtheorem{proposition}[theorem]{Proposition}
\newtheorem{fact}[theorem]{Fact}
\newtheorem{lemma}[theorem]{Lemma}
\theoremstyle{definition}
\newtheorem{defn}{Definition}
\newtheorem{rmrk}{Remark}
\def\aga{\texttt{a}}
\newcommand{\ssRESalphaProxy}{\,\rotatebox[origin=c]{90}{$\scriptstyle\{\rotatebox[origin=c]{-90}{$\scriptstyle \alpha$}\}$}\,}
\newcommand{\rab}{\RESalphaBox}
\newcommand{\bbZ}{\mathbb{Z}}
\newcommand{\lsem}{\mathopen{[\![}}
\newcommand{\rsem}{\mathclose{]\!]}}
\newcommand{\sem}[1]{\lsem #1 \rsem}
\newcommand{\Mbb}{{\mathbb{ M}}}
\newcommand{\Zbb}{{\mathbb{ Z}}}
\newcommand{\fBox}[1]{[#1]}
\newcommand{\pBox}[1]{[#1{^{\circ}}]}
\newcommand{\fDiam}[1]{\langle#1\rangle}
\newcommand{\pDiam}[1]{\langle#1{^{\circ}}\rangle}
\newcommand{\conv}[1]{#1{^{\!\circ}}}
\newcommand{\Bigsemic}{\mbox{\Large {\bf ;}}}
\newcommand{\Bdia}{\blacklozenge}
\newcommand{\RESagaProxy}{\,\rotatebox[origin=c]{90}{$\{\rotatebox[origin=c]{-90}{$\aga$}\}$}\,}
\newcommand{\fns}{\footnotesize}
\newcommand{\mc}{\multicolumn}
\newcommand{\RESalphaProxy}{\,\rotatebox[origin=c]{90}{$\{\rotatebox[origin=c]{-90}{$\alpha$}\}$}\,}
\newcommand{\RESalphajProxy}{\,\rotatebox[origin=c]{90}{$\{\rotatebox[origin=c]{-90}{$\!\alpha_{\!j}$}\,\}$}\,}
\newcommand{\RESalphaBox}{\,\rotatebox[origin=c]{90}{$[\mkern1.8mu\rotatebox[origin=c]{-90}{$\alpha$}\mkern1.8mu]$}\,}
\newcommand{\RESalphaDia}{\,\rotatebox[origin=c]{90}{$\langle\rotatebox[origin=c]{-90}{$\alpha$}\rangle$}\,}
\def\aga{\texttt{a}}
\newcommand{\RESagaDia}{\,\rotatebox[origin=c]{90}{$\langle\rotatebox[origin=c]{-90}{$\aga$}\rangle$}\,}
\newcommand{\RESagaBox}{\,\rotatebox[origin=c]{90}{$[\mkern1.8mu{\rotatebox[origin=c]{-90}{$\aga$}}\mkern1.8mu]$}\,}
\newcommand{\ls}{\lbrack}
\newcommand{\rs}{\rbrack}
\newcommand{\lc}{\langle}
\newcommand{\rc}{\rangle}
\newcommand{\LRA}{\leftrightarrow}
\newcommand\val[1]{{\lbrack\!\lbrack} {#1}{\rbrack\!\rbrack}}
\newcommand{\pand}{\wedge}
\newcommand{\por}{\vee}
\newcommand{\pra}{\rightarrow}
\def\aol{\rule[0.5865ex]{1.38ex}{0.1ex}}
\def\pdra{\mbox{$\,{\rotatebox[origin=c]{-90}{\raisebox{0.12ex}{$\pand$}}{\mkern-2mu\aol}}\,$}}
\def\pdla{\mbox{\rotatebox[origin=c]{180}{$\,{\rotatebox[origin=c]{-90}{\raisebox{0.12ex}{$\pand$}}{\mkern-2mu\aol}}\,$}}}
\def\fCenter{{\mbox{$\ \vdash\ $}}}
\newcommand{\commment}[1]{}
\begin{document}
\title{A Proof-Theoretic Semantic Analysis \\ of Dynamic Epistemic Logic}

\author{Sabine Frittella\footnote{Laboratoire d'Informatique Fondamentale de Marseille (LIF) - Aix-Marseille Universit\'{e}.}\;\; Giuseppe Greco\footnote{Department of Values, Technology and Innovation - TU Delft.}\;\; Alexander Kurz\footnote{Department of Computer Science - University of Leicester.}\\ Alessandra Palmigiano\footnote{Department of Values, Technology and Innovation - TU Delft. The research of the second and fourth author has been made possible by the NWO Vidi grant 016.138.314, by the NWO Aspasia grant 015.008.054, and by a Delft Technology Fellowship awarded in 2013 to the fourth author.}\;\; Vlasta Sikimi\'c\footnote{The Institute of Philosophy of the Faculty of Philosophy - University of Belgrade.}}

\maketitle

\begin{abstract}
\noindent The present paper provides an analysis of the existing proof systems for dynamic epistemic logic from the viewpoint of proof-theoretic semantics. Dynamic epistemic logic is one of the best known members of a family of logical systems which have been successfully applied to diverse scientific disciplines, but the proof theoretic treatment of which presents many difficulties.
After an illustration of the proof-theoretic semantic principles most relevant to the treatment of logical connectives, we turn to illustrating the main features of display calculi, a proof-theoretic paradigm which has been successfully employed to give a proof-theoretic semantic account of modal and substructural logics. Then, we review some of the most significant proposals of proof systems for dynamic epistemic logics, and we critically reflect on them in the light of the previously introduced proof-theoretic semantic principles. The contributions of the present paper include a generalisation of Belnap's cut elimination metatheorem for display calculi, and a revised version of the display-style calculus D.EAK \cite{GKPLori}. We verify that the revised version satisfies the previously mentioned proof-theoretic semantic principles, and show that it enjoys cut elimination as a consequence of the generalised metatheorem. \\
{\em Keywords:} display calculus, dynamic epistemic logic, proof-theoretic semantics.\\
{\em Math.\ Subject Class.\ 2010:} 03B45, 06D50, 06D05, 03G10, 06E15.
\end{abstract}

\newpage
\tableofcontents
\newpage


\section{Introduction}
In recent years, driven by applications in areas spanning from program semantics to game theory, the logical formalisms pertaining to the family of {\em dynamic logics} \cite{Kozen, Ditmarsch1} have been very intensely investigated, giving rise to a proliferation of variants.

Typically, the language of a given dynamic logic is an expansion of classical propositional logic with an array of modal-type {\em dynamic operators}, each of which takes an {\em action} as a parameter. The set of actions plays in some cases the role of a set of indexes or parameters; in other cases, actions form a quantale-type algebra. When interpreted in relational models, the formulas of a dynamic logic express properties of the model encoding the present state of affairs, as well as the pre- and post-conditions of a given action. Actions formalize transformations of one model into another one, the updated model, which encodes the state of affairs after the action has taken place.

Dynamic logics have been investigated mostly w.r.t.~their semantics and complexity, while their proof-theoretic aspects have been comparatively not so prominent. However, the existing proposals of proof systems for dynamic logics witness a varied enough array of  methodologies, that a methodological evaluation is now timely.

The present paper is aimed at evaluating the current proposals of proof-systems for the best-known dynamic epistemic logics from the viewpoint of {\em proof-theoretic semantics}.

Proof-theoretic semantics \cite{Peter} is a theory of meaning which assigns formal proofs or derivations an autonomous semantic content. That is, formal proofs are treated as entities in terms of which meaning can be accounted for.
Proof-theoretic semantics has been very influential in an area of research in structural proof theory which aims at defining the meaning of logical connectives in terms of an analysis of the behaviour of the logical connectives inside the derivations of a given proof system. Such an analysis is possible only in the context of proof systems which perform well w.r.t.\ certain criteria; hence, one of the main themes in this area is to identify design criteria which both guarantee that the proof system enjoys certain desirable properties such as normalization or cut-elimination, and which make it possible to speak about the proof-theoretic meaning for given logical connectives.

An analysis of dynamic logics from a proof-theoretic semantic viewpoint is beneficial both for dynamic logics and for structural proof theory. Indeed,  such an analysis provides dynamic logics with  sound methodological and foundational  principles, and with an entirely novel perspective on the topic of dynamics and change, which is {\em independent} from the dominating model-theoretic methods. Moreover,  such an analysis provides structural proof theory with a novel array of case studies against which to test the generality of its proof-theoretic semantic principles, and with the opportunity to extend its modus operandi to still uncharted settings, such as the multi-type calculi  introduced in \cite{Multitype}.

\noindent The structure of the paper goes as follows: in section 2, we introduce the basic ideas of proof-theoretic semantics, as well as some of the principles in structural proof theory that were inspired by it, and we explain their consequences and spirit, in view of their applications in the following sections. In section 3, we prove a generalisation of Belnap's cut elimination metatheorem. In section 4, we review some of the most significant proposals of proof systems for dynamic epistemic logics, focusing mainly on the logic of Public Announcements (PAL) \cite{Plaza} and the logic of Epistemic Knowledge and Actions (EAK) \cite{BMS}, and we critically reflect on them in the light of the principles of proof-theoretic semantics stated in section 2; in particular, in subsection \ref{D.EAK},  we focus on the display-type calculus D.EAK for PAL/EAK introduced in \cite{GKPLori}: we highlight its critical issues---the main of which being that a smooth ({\em Belnap-style}) proof of cut-elimination is not readily available for it. In section 5, we expand on the final coalgebra semantics for D.EAK, which will be relevant for the following developments. In section 6, we propose a revised version of D.EAK, discuss why the revision is more adequate for proof-theoretic semantics, and finally prove the cut-elimination theorem for the revised version as a consequence of the metatheorem proven in section 3.
 In section 7, we collect some conclusions and indicate further directions.  Most of the proofs and derivations are collected in appendices \ref{Special rules}, \ref{cut in D'.EAK} and \ref{ssec: completeness}.

\section{Preliminaries on proof-theoretic semantics and Display Calculi}
In the present section, we review and discuss the proof-theoretic notions which will be used in the further development of the paper. In the following subsection, we outline the conceptual foundations of proof-theoretic semantics; in subsection \ref{ssec:DisplayLogic}, Belnap-style display calculi will be discussed; in subsection \ref{ssec:Wansing} a refinement of Belnap's analysis, due to Wansing, will be reported on. Our presentation is certainly not exhaustive, and will  limit itself to targeting the issues  needed in the further development of the paper. The reader is referred to \cite{Peter, Sch06} for a detailed presentation of proof-theoretic semantics, and to \cite{Wansing, Wa00} for a discussion of proof-theoretic semantic principles in structural proof theory.

\subsection{Basic ideas in proof-theoretic semantics}

\emph{Proof-theoretic semantics} is a line of research which covers both philosophical and technical aspects, and is concerned with methodological issues. Proof-theoretic semantics is based on the idea that a purely inferential theory of meaning is possible. That is, that the meaning of expressions (in a formal language or in natural language) can be captured purely in terms of the proofs and the inference rules which participate in the generation of the given expression, or in which the given expression participates. This {\em inferential} view is opposed to the mainstream {\em denotational} view on the theory of meaning, and is influential in e.g.\ linguistics, linking up to the idea, commonly attributed to Wittgenstein, that `meaning is use'. In proof theory, this idea links up with Gentzen's famous observation about the introduction and elimination rules of his natural deduction calculi:

\begin{quote}
`The introductions represent, as it were, the definitions of the symbols concerned, and the eliminations are no more, in the final analysis, than the consequences of these definitions. This fact may be expressed as follows: In eliminating a symbol, we may use the formula with whose terminal symbol we are dealing only in the sense afforded it by the introduction of that symbol'. (\cite{Gentzen} p.\,80)
\end{quote}

In the proof-theoretic semantic literature, this observation is brought to its consequences: rather than viewing proofs as entities the meaning of which is dependent on denotation, proof-theoretic semantics assigns proofs (in the sense of formal deductions) an \emph{autonomous semantic role}; that is, proofs are entities in terms of which meaning can be accounted for.

Proof-theoretic semantics has inspired and unified much of the research in structural proof theory focusing on the purely inferential characterization of logical constants (i.e.\ logical connectives) in the setting of a given proof system.

\subsection{Display calculi}\label{ssec:DisplayLogic}

Display calculi are among the approaches in structural proof theory aimed at the uniform development of an inferential theory of meaning of logical constants aligned with the ideas of proof-theoretic semantics. Display calculi have been successful in giving adequate proof-theoretic accounts of logics---such as modal logics and substructural logics---which have notoriously been difficult to treat with other approaches. In particular, the contributions in this line of research which are most relevant to our analysis are Belnap's \cite{Belnap}, Wansing's \cite{Wansing}, Gor\'{e}'s \cite{Gore1}, and Restall's \cite{Restall}.

\paragraph{Display Logic.} Nuel Belnap introduced the first display calculus, which he calls {\em Display Logic} \cite{Belnap}, as a sequent system augmenting and refining Gentzen's basic observations on structural rules. Belnap's refinement is based on the introduction of a special syntax for the constituents of each sequent. Indeed, his calculus treats sequents $ X \vdash Y$ where $X$ and $Y$ are so-called \emph{structures}, i.e.\ syntactic objects inductively defined from formulas using an array of special connectives. Belnap's basic idea is that, in the standard Gentzen formulation, the comma symbol `$,$' separating formulas in the precedent and in the succedent of sequents can be recognized as a metalinguistic connective, of which the structural rules define the behaviour.

Belnap took this idea further by admitting not only the comma, but also several other connectives to keep formulas together in a structure, and called them {\em structural connectives}. Just like the comma in standard Gentzen sequents is interpreted contextually (that is, as conjunction when occurring on the left-hand side and as disjunction when occurring on the right-hand side), each structural connective typically corresponds to a pair of logical connectives, and is interpreted as one or the other of them  contextually (more of this in sections \ref{sec:final coalgebra sem} and \ref{D'.EAK}). Structural connectives maintain relations with one another, the most fundamental of which take the form of adjunctions and residuations. These relations make it possible for the calculus to enjoy the powerful property which gives it its name, namely, the {\em display property}. Before introducing it formally, let us agree on some auxiliary definitions and nomenclature: \emph{structures} are defined much in the same way as formulas, taking formulas as atomic components and closing under the given structural connectives; therefore, each structure can be uniquely associated with a generation tree. Every node of such a generation tree defines a {\em substructure}. A \emph{sequent}  $X\vdash Y$ is a pair of structures $X,Y$. The display property was introduced by Belnap, see Theorem 3.2 of \cite{Belnap} (where $X\vdash Y$ is called a consecution and $X$ the antecedent and $Y$ the consequent):

\begin{defn} \label{def: display prop} A proof system enjoys the {\em display property}  iff for every sequent $X \vdash Y$ and every substructure $Z$  of either  $X$ or  $Y$, the sequent  $X \vdash Y$ can be equivalently transformed, using the rules of the system, into a sequent which is either of the form $Z \vdash W$ or of the form $W \vdash Z$, for some structure $W$. In the first case, $Z$ is \emph{displayed in precedent position}, and in the second case, $Z$ is \emph{displayed in succedent position}.
The rules enabling this equivalent rewriting  are called \emph{display postulates}.
\end{defn}

Thanks to the fact that display postulates are based on adjunction and residuation,  in display calculi exactly one of the two alternatives mentioned in the definition above occurs. In other words, in a system enjoying the display property, any substructure of any sequent  $X \vdash Y$ is always displayed either only in  precedent position or only in succedent position. This is why we can talk about occurrences of substructures in {\em precedent} or in {\em succedent} position, even if they are nested deep within a given sequent, as illustrated in the following example:
\begin{center}
{\AX$Y \fCenter X > Z$
\UI$X\,; Y \fCenter Z$
\UI$Y\,; X \fCenter Z$
\UI$X \fCenter Y > Z$
\DisplayProof}
\label{example adj}
\end{center}
\noindent In the derivation above, the structure $X$ is on the right side of the turnstile, but it is  displayable on the left, and therefore is in  precedent position.   As we will see next, the display property is a crucial technical ingredient for display calculi, but it is also at the basis of
Belnap's methodology for characterizing operational connectives: according to Belnap, any logical  connective should be introduced {\em in isolation}, i.e., when it is introduced, the context on the side it has been introduced must be empty. The display property guarantees that this condition is not too restrictive.
\\


To illustrate the fundamental role played by the display property in the transformation steps of the cut elimination metatheorem,  consider the elimination step of the following cut application, in which the cut formula is principal on both premises of the cut.

\begin{center}
\scriptsize{
\begin{tabular}{c c c}
\bottomAlignProof
\AXC{\ \ \ $\vdots$ \raisebox{1mm}{$\pi_1$}}
\noLine
\UI$X\fCenter A$
\AXC{\ \ \ $\vdots$ \raisebox{1mm}{$\pi_2$}}
\noLine
\UI$Y \fCenter B$
\BI$X \,; Y \fCenter A \wedge B$
\AXC{\ \ \ $\vdots$ \raisebox{1mm}{$\pi$}}
\noLine
\UI$A \,; B \fCenter Z$
\UI$A \wedge B \fCenter Z$
\BI$X \,; Y \fCenter Z$
\DisplayProof
&$ \quad \rightsquigarrow \quad $&
\bottomAlignProof
\AXC{\ \ \ $\vdots$ \raisebox{1mm}{$\pi_2$}}
\noLine
\UI$Y \fCenter B$
\AXC{\ \ \ $\vdots$ \raisebox{1mm}{$\pi_1$}}
\noLine
\UI$X \fCenter A$
\AXC{\ \ \ $\vdots$ \raisebox{1mm}{$\pi$}}
\noLine
\UI$A \,; B\fCenter Z$
\dashedLine
\UI$A \fCenter Z < B$
\BI$X \fCenter Z < B$
\dashedLine
\UI$X \,; B \fCenter Z$
\dashedLine
\UI$B \fCenter X > Z$
\BI$Y \fCenter X > Z$
\dashedLine
\UI$X \,;Y \fCenter Z$
\DisplayProof
\end{tabular}}
\end{center}

\normalsize

The dashed lines in the prooftree on the right-hand side  correspond to applications of display postulates. Clearly, this transformation step  has been made  possible because the display postulates disassemble, as it were, compound structures so as to give us access to the immediate subformulas of the original cut formula, and then reassemble them so as to `put things back again'. Hence, it is possible to break down the original cut into two cut applications  on the immediate subformulas, as required by the original Gentzen strategy.


\paragraph{Canonical cut elimination.} In \cite{Belnap}, a meta-theorem is proven, which gives sufficient conditions in order for a sequent calculus to enjoy cut-elimination.\footnote{Note that, as Belnap observed on pag.\ 389 in \cite{Belnap}: `The eight conditions are supposed to be a reminiscent of those of Curry' in \cite{Curry}.} This meta-theorem captures the essentials of the Gentzen-style cut-elimination procedure, and is the main technical motivation for the design of Display Logic.  Belnap's meta-theorem  gives a set of eight conditions on sequent calculi, which are relatively easy to check, since most of them are verified by inspection on the shape of the rules. Together, these conditions guarantee that the cut is eliminable in the given sequent calculus, and that the calculus enjoys the subformula property. When Belnap's metatheorem can be applied, it provides a much smoother and more modular route to cut-elimination than the Gentzen-style proofs. Moreover, as we will see later, a Belnap style cut-elimination theorem is robust with respect to adding structural rules and with respect to adding new logical connectives, whereas a Gentzen-style cut-elimination proof for the modified system cannot be deduced from the old one, but must be proved from scratch.

In a slogan, we could say that Belnap-style cut-elimination is to ordinary cut-elimination what canonicity is to completeness: indeed, canonicity provides a {\em uniform strategy} to achieve completeness. In the same way, the conditions required by Belnap's meta-theorem ensure that {\em one and the same}  given set of transformation steps is enough to achieve Gentzen-style cut elimination for any system satisfying them.\footnote{The relationship between canonicity and Belnap-style cut-elimination is in fact more than a mere analogy, see  \cite[Theorem 20]{Kracht}.}

In what follows, we review and discuss eight conditions which are stronger in certain respects than those in \cite{Belnap},\footnote{See also \cite{Be2, Restall} and the `second formulation' of condition C6/7 in subsection 4.4 of \cite{Wansing}.} and which define the notion of {\em proper display calculus} in \cite{Wansing}.\footnote{See the `first formulation' of conditions C6, C7 in subsection 4.1 of \cite{Wansing}.}

\paragraph{ C$_1$: Preservation of formulas.} This condition requires each formula occurring in a premise of a given inference to be the subformula of some formula in the conclusion of that inference. That is, structures may disappear, but not formulas. This condition is not included in the list of sufficient conditions of the cut-elimination meta-theorem, but, in the presence of cut-elimination, it guarantees the subformula property of a system.
Condition $C_1$ can be verified by inspection on the shape of the rules.
\paragraph{ C$_2$: Shape-alikeness of parameters.}
This condition is based on the relation of {\em congruence} between {\em parameters} (i.e., non-active parts) in inferences; 
the congruence relation is an equivalence relation which is meant to identify the different occurrences of the same formula or substructure along the branches of a derivation \cite[section 4]{Belnap}, \cite[Definition 6.5]{Restall}. 
Condition C$_2$ requires that congruent parameters be occurrences of the same structure. This can be understood as a condition on the {\em design} of the rules of the system if the congruence relation is understood as part of the specification of each given rule; that is, each rule of the system comes with an explicit specification of which elements are congruent to which (and then the congruence relation is defined as the reflexive and transitive closure of the resulting relation). In this respect, C$_2$ is nothing but a sanity check, requiring that the congruence is defined in such a way that indeed identifies the occurrences which are intuitively ``the same''.

\paragraph{C$_3$: Non-proliferation of parameters.}   Like the previous one, also this condition is actually about the definition of the congruence relation on  parameters. Condition C$_3$ requires that, for every inference (i.e.\ rule application), each of its parameters  is congruent to at most one parameter in the conclusion of that inference. Hence, the condition stipulates that for a rule such as  the following,
\[
\AxiomC{$X \vdash Y$}
\UnaryInfC{$X \,; X \vdash Y$}
\DisplayProof
\]
\noindent  the structure $X$ from the premise is congruent to {\em only one} occurrence of $X$ in the conclusion sequent. Indeed, the introduced occurrence of $X$ should be considered congruent only to itself. Moreover, given that the congruence is an equivalence relation, condition C$_3$ implies that, within a given sequent, any substructure is congruent only to itself.

\begin{rmrk}
\label{rem: history tree}
Conditions C$_2$ and C$_3$ make it possible to follow the history of a formula along the branches of any given derivation. In particular, C$_3$ implies that the the history of any formula within a given derivation has the shape of a tree, which we refer to as the \emph{history-tree} of that formula in the given derivation. Notice, however, that the history-tree of a formula might have a different shape than the portion of the underlying derivation corresponding to it; for instance, the following application of the Contraction rule gives rise to a bifurcation of the history-tree of $A$ which is absent in the underlying branch of the derivation tree, given that Contraction is a unary rule.

\begin{tabular}{lcr}
\ \ \ \ \ \ \ \ \ \ \ \ \ \ \ \ \ \ \ \ \
\bottomAlignProof
\AXC{$\vdots$}
\noLine
\UI$A \,; A\fCenter X$
\UI$A\fCenter X$
\DisplayProof

 & \ \ \ \ \ \ \ \  &

$\unitlength=0.80mm
\put(0.00,10.00){\circle*{2}}
\put(10.00,0.00){\circle*{2}}
\put(20.00, 10.00){\circle*{2}}
\put(40.00, 10.00){\circle*{2}}
\put(0.00,10.00){\line(1,-1){10}}
\put(10.00,0.00){\line(1,1){10}}
\put(40.00, 0.00){\circle*{2}}
\put(40.00,0.00){\line(0,1){10}}
$
\\
\end{tabular}

\end{rmrk}

\paragraph{C$_4$: Position-alikeness of parameters.} This condition bans any rule in which a (sub)structure in precedent (resp.~succedent) position in a premise is congruent to a (sub)structure in succedent (resp.\ precedent) position in the conclusion.

\bigskip

\paragraph{C$_5$: Display of principal constituents.} This condition requires that any principal occurrence be always either the entire antecedent or the entire consequent part of the sequent in which it occurs. In the following section, a generalization of this condition will be discussed, in view of its application to the main focus of interest of the present paper.

\bigskip
The following conditions C$_6$ and C$_7$ are not reported below as they are stated in the original paper \cite{Belnap}, but as they appear in \cite[subsection 4.1]{Wansing}. More about this difference is discussed in section \ref{ssec:further directions}.
\paragraph{C$_6$: Closure under substitution for succedent parameters.} This condition requires  each rule to be closed under simultaneous substitution of arbitrary structures for congruent formulas which occur in succedent position.
Condition C$_6$ ensures, for instance, that if the following inference is an application of the rule $R$:

\begin{center}
\AX$(X \fCenter Y) \big([A]^{suc}_{i} \,|\, i \in I\big)$
\RightLabel{$R$}
\UI$(X' \fCenter Y') [A]^{suc}$
\DisplayProof
\end{center}

\noindent and $\big([A]^{suc}_{i} \,|\, i \in I\big)$ represents all and only  the occurrences of $A$ in the premiss which are congruent to the occurrence of $A$  in the conclusion\footnote{Clearly, if $I = \varnothing$, then the occurrence of $A$ in the conclusion is congruent to itself.}, 
then also the following inference is an application of the same rule $R$:

\begin{center}
\AX$(X \fCenter Y) \big([Z/A]^{suc}_{i} \,|\, i \in I\big)$
\RightLabel{$R$}
\UI$(X' \fCenter Y') [Z/A]^{suc}$
\DisplayProof
\end{center}

\noindent where the structure $Z$ is substituted for $A$.

\noindent This condition caters for the step in the cut elimination procedure in which the cut needs to be ``pushed up'' over rules in which the cut-formula in  succedent position  is parametric. Indeed, condition C$_6$ guarantees that, in the picture below, a well-formed subtree $\pi_1[Y/A]$ can be obtained from $\pi_1$ by replacing any occurrence of $A$ corresponding to a node in the history tree of the cut-formula $A$ by $Y$, and hence  the following transformation step is guaranteed go through uniformly and ``canonically'':

\begin{center}
\footnotesize{
\bottomAlignProof
\begin{tabular}{lcr}
\AXC{\ \ \ $\vdots$ \raisebox{1mm}{$\pi'_1$}}
\noLine
\UI$X' \fCenter A$
\noLine
\UIC{\ \ \ $\vdots$ \raisebox{1mm}{$\pi_1$}}
\noLine
\UI$X \fCenter A$
\AXC{\ \ \ $\vdots$ \raisebox{1mm}{$\pi_2$}}
\noLine
\UI$A \fCenter Y$
\BI$X \fCenter Y$
\DisplayProof
 & $\rightsquigarrow$ &
\bottomAlignProof
\AXC{\ \ \ $\vdots$ \raisebox{1mm}{$\pi'_1$}}
\noLine
\UI$X' \fCenter A$
\AXC{\ \ \ $\vdots$ \raisebox{1mm}{$\pi_2$}}
\noLine
\UI$A \fCenter Y$
\BI$X' \fCenter Y $
\noLine
\UIC{\ \ \ \ \ \ \ \ \ $\vdots$ \raisebox{1mm}{$\pi_1[Y/A]$}}
\noLine
\UI$X \fCenter Y$
\DisplayProof
 \\
\end{tabular}
}
\end{center}
\noindent if each rule in $\pi_1$ verifies condition C$_6$.

\paragraph{C$_7$: Closure under substitution for precedent parameters.} This condition requires  each rule to be closed under simultaneous substitution of arbitrary structures for congruent formulas which occur in precedent position.
Condition C$_7$ can be understood analogously to C$_6$, relative to formulas in precedent position. Therefore, for instance, if the following inference is an application of the rule $R$:

\begin{center}
\AxiomC{$(X \fCenter Y) \big([A]^{pre}_{i} \,|\, i \in I\big)$}
\RightLabel{$R$}
\UnaryInfC{$ (X' \vdash Y') [A]^{pre}$}
\DisplayProof
\end{center}

\noindent then also the following inference is an instance of $R$:

\begin{center}
\AxiomC{$(X \fCenter Y) \big([Z/A]^{pre}_{i} \,|\, i \in I\big)$}
\RightLabel{$R$}
\UnaryInfC{$ (X' \vdash Y') [Z/A]^{pre}$}
\DisplayProof
\end{center}

\noindent Similarly to what has been discussed for condition C$_6$, condition C$_7$ caters for the step in the cut elimination procedure in which the cut needs to be ``pushed up'' over rules in which the cut-formula in  precedent position  is parametric.

\paragraph{C$_8$: Eliminability of matching principal constituents.}

This condition  requests a standard Gentzen-style checking, which is now limited to the case in which  both cut formulas  are {\em principal}, i.e.~each of them has been introduced with the last rule application of each corresponding subdeduction. In this case, analogously to the proof  Gentzen-style, condition C$_8$ requires being able to transform the given deduction into a deduction with the same conclusion in which either the cut is eliminated altogether, or is transformed in one or more applications of cut involving proper subformulas of the original cut-formulas.

\bigskip

\paragraph{Rules introducing logical connectives.}   In display calculi, these rules, sometimes referred to as {\em operational rules} as opposed to the structural rules, typically occur in two flavors: operational rules which translate one structural connective in the premises in the corresponding connective in the conclusion, and operational rules in which both the operational connective and its structural counterpart are introduced in the  conclusion. An example of this pattern is provided below for the case of the modal operator `diamond':

\[
\AX$\circ A \fCenter X$
\RightLabel{$\Diamond_{L}$}
\UI$\Diamond A \fCenter X$
\DisplayProof
\qquad
\AX$X \fCenter A$
\RightLabel{$\Diamond_{R}$}
\UI$\circ X \fCenter \Diamond A$
\DisplayProof
\]

\noindent  This introduction pattern obeys very strict criteria, which will be expanded on in the next subsection.  From this example, it is clear that the introduction rules capture the rock bottom behavior of the logical connective in question; additional properties (for instance, normality, in the case in point), which might vary depending on the logical system, are to be captured at the level of additional (purely  structural) rules. This enforces a  clear-cut division of labour between operational rules, which only encode the basic proof-theoretic meaning of logical connectives, and structural rules, which account for all extra relations and properties, and which can be modularly added or removed, thus accounting for the space of logics.

Summing up, the  two main benefits of display calculi are a ``canonical'' proof of cut elimination, and an explicit and modular account of logical connectives.

\subsection{Wansing's criteria}\label{ssec:Wansing}

In \cite[subsubsection 1.3]{Wansing}, referring to the well known idea that `a proof-theoretic semantics exemplifies the Wittgensteinian slogan that meaning is use', Wansing stresses that, for this slogan to serve as a conceptual  basis for a general inferential theory of meaning, `use' should be understood as `{\em correct} use'. The consequences of the idea of meaning as correct use then precipitate into  the following principles for the introduction rules for operational connectives, which he discusses in the same subsection and which are reported below. These principles are hence to be understood as the general requirements a (sequent-style) proof system needs to satisfy in order to encode the correct use, and hence for being suitable for proof-theoretic semantics.

\paragraph{Separation.} This principle requires that the introduction rules for a given connective $f$ should not exhibit any other connective rather than $f$. Hence the meaning of a given operational connective cannot be dependent from any other operational connectives. For instance, the following rule does not satisfy  \emph{separation}:

\[
\AxiomC{$ \Box \Gamma \vdash  A, \Diamond \Delta$}
\UnaryInfC{$ \Box \Gamma \vdash \Box A\,, \Diamond \Delta $}
\DisplayProof
\]

\noindent This criterion does not ban the possibility of defining  composite connectives; however, it ensures that the dependence relation between connectives creates no  vicious circles. In fact, as it is formulated, this criterion is much stronger, since it requires every connective to be independent of any other.

\paragraph{Isolation.} This is a stronger requirement than separation, and stipulates that, in addition, the precedent (resp.\ succedent) of the conclusion sequent in a left (resp.\ right) introduction rule must not exhibit any structure operation. In \cite{Belnap}, Belnap explains this requirement by remarking that an introduction rule with nonempty context on the principal side would fail to account for the meaning of the logical connective involved in a context-independent way.

\paragraph{Segregation.} This is an even stronger requirement than isolation, and stipulates that, in addition, also the auxiliary formulas in the premise(s) must occur within an empty context. This property appears under the name of \emph{visibility} in \cite{BatFagSam}.\footnote{In \cite{TrendsXIII}, following ideas from \cite{BatFagSam}, the visibility property has been identified as an essential ingredient to generalise Belnap's metatheorem beyond display calculi.}

\paragraph{Weak symmetry.} This requirement stipulates that each introduction rule for  a given connective $f$ should either belong to a set of rules $(f \vdash)$ which introduce $f$ on the left-hand side of the turnstile $\vdash$ in the conclusion sequent, or to a set of rules $(\vdash f)$ which introduce $f$ on the right-hand side of the turnstile $\vdash$ in the conclusion sequent. Understanding the  either-or as exclusive disjunction, this criterion prevents an operational connective to be introduced  on both sides by the application of one and the same rule. Thus, weak symmetry stipulates that the sets $(f\vdash)$ and $(\vdash f)$ be disjoint. However, weak symmetry does not exclude that either $(f\vdash)$ or $(\vdash f)$ be empty.

\paragraph{Symmetry.} This condition strengthens weak symmetry by requiring both $(f\vdash)$ and $(\vdash f)$ to be nonempty for each connective $f$. Rather than a requirement on individual rules, this principle is a requirement on the set of the introduction rules for any given connective. Notice that symmetry does not exclude the possibility of having, for instance, two rules that introduce a given connective on the left and one that introduces it on the right side of the turnstile.

\paragraph{Weak explicitness.}  An introduction  rule for $f$ is {\em weakly explicit} if $f$ occurs only in the conclusion of a rule  and not in its premisses.

\paragraph{Explicitness.} An introduction  rule for $f$ is {\em explicit} if it is weakly explicit and in addition to this, $f$ appears {\em only once} in the conclusion of the rule.

\bigskip

\noindent The following principles are of a more global nature, which involves the proof system as a whole:

\paragraph{Unique characterization.} This principle requires each logical connective to be uniquely characterized by its behaviour in the system, in the following sense. Let $\Lambda$ be a logical system with a syntactic presentation $S$ in which $f$ occurs. Let $S^\ast$ be the result of rewriting $f$ everywhere in S as $f^\ast$, and let $\Lambda\Lambda^\ast$  be the system presented by the union $SS^\ast$ of $S$ and $S^\ast$ in the combined language with both $f$ and $f^\ast$. Let $A_f$ denote a formula (in this language) that contains a certain occurrence of $f$, and let $A_{f^\ast}$ denote the result of replacing this occurrence of $f$ in $A_f$ by $f^\ast$. The connectives $f$ and $f^\ast$ are  {\em uniquely characterized} in $\Lambda\Lambda^\ast$ (cfr.\ \cite[subsubsection 1.4]{Wansing}) if for every formula $A_f$ in the language of $\Lambda\Lambda^\ast$, $A_f$ is provable in $SS^\ast$ iff $A_{f^\ast}$ is provable in $SS^\ast$.

\paragraph{Do\v{s}en's principle.} Hilbert style presentations  are modular in the following sense: if $\Lambda_1$ and $\Lambda_2$ are finitely axiomatizable logics over the same language and $\Lambda_1$ is stronger than $\Lambda_2$, then an axiomatization of $\Lambda_2$ can be obtained from one of $\Lambda_1$ by adding finitely many axioms to it.
This makes it possible to modularly generate all finite axiomatic extensions of a given logic. Although it is arguably more difficult to achieve an analogous degree of modularity in the sequent calculi presentation, a principle aimed to achieve it has been advocated by Wansing under the name of \emph{Do\v{s}en's principle} (cfr.\ \cite[subsubsection 1.5]{Wansing}): ``The rules for the logical operations are never changed; all changes are made in the structural rules''. Thus, suitable finite axiomatic extensions of a given logic $L$ can be captured by adding structural rules to the proof system associated with $L$.
Display calculi are particularly suitable to implement Do\v{s}en's principle. As remarked early on, besides featuring structural rules which encode properties of single structural connectives (which is the case e.g.~of the rule exchange), display calculi typically feature rules which concern the interaction between different structural connectives (the adjunction between two structural connectives is an example of the latter type of rule, see for instance the rules applied in the example on page \pageref{example adj}).

\paragraph{Cut-eliminability.} Finally, Wansing considers the eliminability of the \emph{cut} rule as an important requirement for the proof-theoretic semantics of logical connectives.

\section{Belnap-style metatheorem for quasi proper display calculi}
In the present section, we discuss a slight extension of Wansing's notion of proper display calculus (cf.\ Subsection \ref{ssec:DisplayLogic}), and prove its associated Belnap-style cut elimination metatheorem. The cut elimination for the calculus D'.EAK introduced in Section \ref{sec:C_1-C7for D'.EAK} (see also Appendix \ref{cut in D'.EAK}) will be derived as an instance of the metatheorem below.

\subsection{Quasi proper display calculi}
\label{ssec:quasi-def}
\begin{defn}
\label{def:quasiproper}
A sequent calculus is a {\em quasi proper display calculus} if it verifies conditions C$_1$, C$_2$, C$_3$, C$_4$, C$_6$, C$_7$, C$_8$ of section \ref{ssec:DisplayLogic}, and moreover it satisfies the following conditions C$'_5$, C$''_5$ and C$'_8$:
\paragraph{C$'_5$: Quasi-display of principal constituents.} If a formula $A$ is principal in the conclusion sequent $s$ of a derivation $\pi$, then $A$ is in display, unless $\pi$ consists only of its conclusion sequent $s$ (i.e.\ $s$ is an axiom).

\paragraph{C$''_5$: Display-invariance of axioms.} If a display rule can be applied to an axiom $s$, the result of that rule application is again an axiom.

\paragraph{C$'_8$: Closure of axioms under cut.} If $X\vdash A$ and $A\vdash Y$ are axioms, then $X\vdash Y$ is again an axiom.

\end{defn}

\noindent Notice that condition C$_5$ in Subsection \ref{ssec:DisplayLogic} is stronger than both C$'_5$ and C$''_5$, and that
the strength of condition C$'_5$ is intermediate between that of C$_5$ and of the following one, appearing in  \cite[Definition 6.8]{Restall}:

\paragraph{C$'''_5$: Single principal constituents.}
This condition requires that, in the conclusion of any rule, there be at most one  {\em non-parametric} formula---which is the formula introduced by the application of the rule in question---unless the rule is an axiom.

\medskip
The above condition C$'''_5$ is introduced in \cite{Restall} within a setting accounting  for  sequent calculi which do not necessarily enjoy the full display property. The calculi considered in \cite{Restall} are such that the introduction rules do not need to enjoy the requirement of isolation (cf.\ Chapter 6), and the (multiple) cut rule applies at any depth. The calculus introduced in Section \ref{D'.EAK} enjoys the full display property, therefore the following cut rule, in which both cut formulas occur in isolation:\begin{center}
\AX$X \fCenter A$
\AX$A \fCenter Y$
\RightLabel{$Cut$}
\BI$X \fCenter Y$
\DisplayProof
\end{center} will be taken as primitive in it without loss of generality, as is standardly done in display calculi. However, the calculus in Section \ref{D'.EAK} fails to enjoy the property of \emph{isolation}, which typically plays a role in the cut elimination metatheorem for display calculi, and indeed appears in \cite{Wansing} as condition C$_5$.
 In the next subsection, we show that, even when the cut rule is the one above, requiring the combination of C$'_5$ and C$''_5$ suffices.\footnote{In \cite{Multitype}, we give a metatheorem which is based on a different tradeoff: on the one hand, we will not require the full display property, but on the other we will require a condition close to segregation. }

\subsection{Belnap-style metatheorem}
\label{ssec:B-style metathrm quasiprop}

The aim of the present subsection is to prove the following theorem:\commment{
In order to account for calculi admitting Contraction as a structural rule, a  slightly more general version of the cut rule (e.g.\ the Gentzen's mix rule, also known as Multi-Cut) can be introduced
\cite[Footnote 5]{Belnap} \cite[Chapter 6]{Re},

\begin{center}
{\fns
\begin{tabular}{lr}
\AX$X \fCenter A$
\AX$(X' \fCenter Y') [A_1^{pre} \ldots A_n^{pre}]$
\RightLabel{$Cut^{pre}$}
\BI$X' \fCenter Y' [X_1^{pre} \ldots X_m^{pre}]$
\DisplayProof
 &
\AX$(X' \fCenter Y') [A_1^{suc} \ldots A_n^{suc}]$
\AX$A \fCenter X$
\RightLabel{$Cut^{suc}$}
\BI$X' \fCenter Y' [X_1^{suc} \ldots X_m^{suc}]$
\DisplayProof
 \\
\end{tabular}
}
\end{center}
where $m\leq n$, or even more as follows:
\begin{center}
{\fns
\begin{tabular}{lr}
\AX$(Z \fCenter W) [A^{suc}]$
\AX$(X' \fCenter Y') [A_1^{pre} \ldots A_n^{pre}]$
\RightLabel{$Cut'^{pre}$}
\BI$X' \fCenter Y' [X_1^{pre} \ldots X_m^{pre}]$
\DisplayProof
 &
\AX$(X' \fCenter Y') [A_1^{suc} \ldots A_n^{suc}]$
\AX$Z \fCenter W [A^{pre}]$
\RightLabel{$Cut'^{suc}$}
\BI$X' \fCenter Y' [X_1^{suc} \ldots X_m^{suc}]$
\DisplayProof
 \\
\end{tabular}
}
\end{center}
where $m\leq n$, $(Z \vdash W) [A^{suc}]$ is display-equivalent to $X \fCenter A$ and $Z \vdash W [A^{pre}]$ is display-equivalent to $A \vdash X$.
However, the proof with multi-cut is entirely analogous to the one which will be given here, so for the sake of simplicity  we present it for the cut rule in its original formulation.}

\begin{theorem}
\label{thm:meta}
Any calculus satisfying conditions C$_2$, C$_3$, C$_4$, $C'_5$, $C''_5$, C$_6$, C$_7$, C$_8$, and $C'_8$ enjoys cut elimination. If C$_1$ is also satisfied, then the calculus enjoys the subformula property.
\end{theorem}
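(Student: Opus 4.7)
The plan is to adapt Belnap's original cut-elimination argument to the quasi-proper setting, where the main accommodation needed is the treatment of axioms, since $C'_5$ no longer forces a principal formula to be displayed when the subderivation in question is a single axiom. I would proceed by a double induction: primary on the complexity of the cut formula $A$, and secondary on the sum of the heights of the two immediate subderivations of a topmost cut. It suffices to show that any topmost cut with cut-free premises can be pushed up or reduced to cuts of strictly smaller primary or secondary measure. So fix a topmost cut with subderivations $\pi_1$ ending in $X \vdash A$ and $\pi_2$ ending in $A \vdash Y$, and case-split on whether $A$ is principal or parametric in the last rule of each $\pi_i$.

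In the parametric case, say $A$ is parametric in the last rule $R$ of $\pi_1$ (the case for $\pi_2$ being symmetric), condition $C_4$ ensures that every occurrence of $A$ in the premises of $R$ congruent to the displayed one occurs in succedent position; conditions $C_2$ and $C_3$ make the history tree of $A$ well-defined. Then $C_6$ permits $R$ to be re-applied after $Y$ is simultaneously substituted for $A$ throughout that history tree. Concretely, each premise of $R$ containing congruent $A$-occurrences is first cut against $\pi_2$; by the secondary IH these cuts are eliminable, and $R$ is re-applied to the results to recover $X \vdash Y$. The symmetric precedent case uses $C_7$.

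The genuinely principal case splits further. If neither $\pi_i$ consists of a single axiom, then $C'_5$ guarantees that $A$ is the entire succedent of $X \vdash A$ and the entire antecedent of $A \vdash Y$; the cut then matches the shape required by $C_8$, which rewrites it into cuts on proper subformulas, all dispatched by the primary IH. The subtle case --- and I expect it to be the main obstacle --- is when at least one of $\pi_1, \pi_2$ is itself an axiom in which $A$ is principal but not necessarily in display. Here I would invoke $C''_5$: display postulates can be applied inside the axiom to bring $A$ into display without leaving the class of axioms. If after this reshaping both premises are axioms, $C'_8$ yields directly that $X \vdash Y$ is an axiom; if only one premise is axiomatic, we combine the reshaped axiom with the principal-introduction rule on the other side and apply $C_8$, reducing to cuts on proper subformulas that vanish by the primary IH.

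The main subtlety is therefore to verify that the interplay of $C''_5$ and $C'_8$ faithfully replaces, in the axiomatic cases, the use that the classical Belnap argument would make of the stronger $C_5$: the manoeuvre is always to re-display $A$ within an axiom, then either close by $C'_8$ or hand off to $C_8$. Once this is in place, the induction closes uniformly, and the subformula property in the presence of $C_1$ is an immediate by-product, since no step of the reduction introduces a formula outside the subformula closure of the endsequent.
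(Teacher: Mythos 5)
Your overall architecture (principal vs.\ parametric case split, $C_8$/$C'_8$ for matching principal constituents, $C''_5$ to re-display principal formulas inside axioms) matches the paper's, and your handling of the principal stage is essentially right. The gap is in the parametric stage. First, the move ``each premise of $R$ containing congruent $A$-occurrences is first cut against $\pi_2$'' is not a legal inference: the cut rule of the calculus requires the cut formula to be displayed as the entire succedent (resp.\ antecedent), whereas a parametric occurrence of $A$ in a premise of $R$ is in general buried inside a compound structure. This is precisely why the Belnap-style argument does not push the cut up one rule at a time, but instead follows the whole history-tree of $A$ up to its leaves and performs a \emph{global} substitution of $Y$ for $A$ (justified by $C_6$/$C_7$), inserting new cuts only at those leaves where $A$ is introduced principally --- and there $C'_5$ guarantees $A$ is displayed, or, if the leaf is an axiom, $C''_5$ lets you re-display it. Your text gestures at the global substitution but then ``concretely'' describes the local Gentzen push-up, which does not type-check in a display calculus.

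Second, and independently, your secondary induction on the sum of heights does not close. When the history-tree of the cut formula has more than one principal leaf --- which happens as soon as Contraction identifies two principally introduced ancestors --- the new cuts created at the lower leaves sit above subderivations that have already been modified by the cuts created at the higher leaves, so their heights need not be smaller than that of the original cut. The paper's proof explicitly flags this and resolves it by observing that every cut created at a principal leaf has \emph{both} cut formulas principal, so the principal stage ($C_8$) applies to each of them and strictly reduces the complexity of the cut formula; the induction is then carried by the primary (complexity) measure alone in that case, not by heights. Without this observation your induction is not well-founded in the presence of Contraction, which the calculi targeted by the metatheorem do admit.
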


\begin{proof} This is a generalization of the proof in \cite[Section 3.3, Appendix A]{Wan02}. For the sake of conciseness, we will expand only on the parts of the proof which depart from that treatment.

Our original derivation is
\begin{center}
\AXC{\ \ \ $\vdots$ \raisebox{1mm}{$\pi_1$}}
\noLine
\UI$X\fCenter A$
\AXC{\ \ \ $\vdots$ \raisebox{1mm}{$\pi_2$}}
\noLine
\UI$A \fCenter Y$
\BI$X \fCenter Y$
\DisplayProof
\end{center}

\paragraph*{Principal stage: both cut formulas are principal.}
\noindent There are three subcases.

If the end sequent $X \fCenter Y$ is identical to the conclusion of $\pi_1$ (resp.\ $\pi_2$), then we can eliminate the cut simply replacing the derivation above with $\pi_1$ (resp.\ $\pi_2$).

If the premises $X \vdash A$ and $A\vdash Y$ are axioms, then, by C$'_8$, the conclusion $X\vdash Y$ is an axiom, therefore the cut can be eliminated by simply replacing the original derivation with $X \fCenter Y$.

If one of the two premises of the cut in the original derivation is not an axiom, then, by C$_8$, there is a proof of $X \fCenter Y$ which uses the same premise(s) of the original derivation and which involves only cuts on proper subformulas of $A$.

\paragraph*{Parametric stage: at least one cut formula is parametric.}
There are two subcases: either one cut formula is principal or they are both parametric.

Consider the subcase in which one cut formula is principal. W.l.o.g.\ we assume that the cut-formula $A$ is principal in the the left-premise $X \vdash A$ of the cut in the original proof (the other case is symmetric). As discussed in Remark \ref{rem: history tree}, conditions C$_2$ and C$_3$ make it possible to consider the history-tree of the right-hand-side cut formula $A$ in $\pi_2$. 
 The situation can be pictured as follows:
\begin{center}
\AXC{\ \ \ $\vdots$ \raisebox{1mm}{$\pi_1$}}
\noLine
\def\fCenter{\vdash}
\UI$ X \fCenter A$
\AXC{\ \ \ \ \ $\vdots$ \raisebox{1mm}{$\pi_{2.i}$}}
\noLine
\UI$\underline{A}_i \fCenter Y_i$
\noLine
\UIC{$\ \ \ddots$}
\noLine
\AXC{\ \ \ \ \ $\vdots$ \raisebox{1mm}{$\pi_{2.j}$}}
\noLine
\UI$(X_j \fCenter Y_j)[\underline{A}_j]^{pre}$
\noLine
\UIC{$\vdots$}
\noLine
\AXC{\ \ \ \ \ $\vdots$ \raisebox{1mm}{$\pi_{2.k}$}}
\noLine
\UI$(X_k \fCenter Y_k)[\overline{A}_k]^{pre}$
\noLine
\UIC{$\!\!\!\!\!\!\!\!\!\!\!\!\!\!\!\!\!\!\!\!\!\!\!\!\!\!\!\!\!\!\!\!\!\!\!\!\!\!\!\!\!\!\!\!\iddots$}
\noLine
\TIC{$\ \ \ \ \ \ \ \rule[-5.2mm]{0mm}{0mm}\ddots\vdots\iddots\rule{0mm}{10mm}$ \raisebox{1mm}{$\pi_2$}}
\noLine
\def\fCenter{\vdash}
\UIC{$\ \ \ A \fCenter Y$}
\BI$ X \fCenter Y$
\DisplayProof
\end{center}
\smallskip
where, for $i, j, k \in \{1, \ldots, n\}$, the nodes \[\underline{A}_i \fCenter Y_i, \quad (X_j\vdash Y_j)[A_j]^{pre}, \ \mbox { and }\  (X_k \fCenter Y_k)[\overline{A}_k]^{pre}\] represent the three ways in which the leaves $A_i$, $A_j$ and $A_k$ in the history-tree of $A$ in $\pi_2$ can be introduced, and which will be discussed below. The notation $\underline{A}$ and (resp.\ $\overline{A}$) indicates that the given occurrence is principal (resp.\ parametric). Notice that condition C$_4$ guarantees that all occurrences in the history of $A$ are in precedent position in the underlying derivation tree.

Let $A_l$ be introduced as a parameter (as represented in the picture above in the conclusion of $\pi_{2.k}$ for $A_l = A_k$). Assume that $(X_k \vdash Y_k)[\overline{A}_k]$ is the conclusion of an application \emph{inf} of the rule \emph{Ru} (for instance, in the calculus of section \ref{D'.EAK}, this situation arises if $A_k$ has been introduced with an application of Weakening). Since $A_k$ is a leaf in the history-tree of $A$, we have that $A_k$ is congruent only to itself in $X_k \vdash Y_k$. Hence, C$_7$ implies that it is possible to substitute $X$ for $A_k$ by means of an application of the same rule \emph{Ru}. That is,  $(X_k \vdash Y_k)[\overline{A}_k]$ can be replaced by $(X_k \vdash Y_k)[X/\overline{A}_k]$. 

Let $A_l$ be introduced as a principal formula. The corresponding subcase in \cite{Wan02} splits into two subsubcases: either $A_l$ is introduced in display or it is not.

If $A_l $ is in display (as represented in the picture above in the conclusion of $\pi_{2.i}$ for $A_l = A_i$), then we form a subderivation using $\pi_1$  and $\pi_{2.i}$ and applying cut as the last rule.

If $A_l $ is not in display (as represented in the picture above in the conclusion of $\pi_{2.j}$ for $A_l = A_j$), then condition C$'_5$ implies that $(X_j \vdash Y_j)[\underline{A}_j]^{pre}$ is an axiom (so, in particular, there is at least another occurrence of $A$ in succedent position), and C$''_5$ implies that some axiom $\underline{A}_j \vdash Y'_j$ exists, which is display-equivalent to the first axiom, and in which $A_j$ occurs in display.  Let $\pi'$ be the derivation which transforms $\underline{A}_j \vdash Y'_i$ into $(X_j \vdash Y_j)[\underline{A}_j]^{pre}$. We form a subderivation using $\pi_1$ and $\underline{A}_j \vdash Y'_j$ and joining them with a cut application, then attaching $\pi'[X/A_j]^{pre}$ below the new cut.

The transformations just discussed explain how to transform the leaves of the history tree of $A$. Finally, condition C$_7$ implies that substituting $X$ for each occurrence of $A$ in the history tree of the cut formula $A$ in $\pi_2$ (or in a display-equivalent proof $\pi'$) gives rise to an admissible derivation $\pi_2[X/A]^{pre}$ (use C$_6$ for the symmetric case).

\commment{
We illustrate this special situation in the diagrammatic proof below:

\begin{center}
\begin{tabular}{lcr}
\bottomAlignProof
\AXC{\ \ \ $\vdots$ \raisebox{1mm}{$\pi_1$}}
\noLine
\UI$X \fCenter A$
%
\AXC{$(X_i \fCenter Y_i) [\underline{A}_i]^{pre} [\underline{A}]^{suc}$}
\noLine
\UIC{\ \ \ $\vdots$ \raisebox{1mm}{$\pi_2$}}
\noLine
\UI$A\fCenter Y$
\BI$X \fCenter Y$
\DisplayProof

 & $\rightsquigarrow$ &

\bottomAlignProof
\AXC{\ \ \ $\vdots$ \raisebox{1mm}{$\pi_1$}}
\noLine
\UI$X \fCenter A$
%
\AXC{$\underline{A}_i \fCenter Y' [\underline{A}]^{suc}$}
\BI$X \fCenter Y' [A]^{suc}$
\noLine
\UIC{\ \ \ $\vdots$ \raisebox{1mm}{$\pi'$}}
\noLine
\UI$(X_i \fCenter Y_i) [X/A_i]^{pre} [A]^{suc}$
\noLine
\UIC{\ \ \ \ \ \ \ \ \ \ \ \ \ \ \ \ \ $\vdots$ \raisebox{1mm}{$\pi_2 [X/A]^{pre}$}}
\noLine
\UI$X \fCenter Y$
\DisplayProof
 \\
\end{tabular}
\end{center}
}
Summing up, this procedure generates the following proof tree:
\begin{center}
\AXC{\ \ \ $\vdots$ \raisebox{1mm}{$\pi_1$}}
\noLine
\def\fCenter{\vdash}
\UI$X \fCenter A$
\AXC{\ \ \ \ \ $\vdots$ \raisebox{1mm}{$\pi_{2.i}$}}
\noLine
\UI$\underline{A}_i \fCenter Y_i$
\BI $ X \fCenter Y_i$
\noLine
\UIC{\ \  $\ddots$}
\AXC{\ \ \ $\vdots$ \raisebox{1mm}{$\pi_1$}}
\noLine
\UI$X \fCenter A$
%
\AXC{$\underline{A}_j \fCenter Y' [\underline{A}]^{suc}$}
\BI$X \fCenter Y' [A]^{suc}$
\noLine
\UIC{\ \ \ \ \ \ \ \ \ \ \ \ \ \ $\vdots$ \raisebox{1mm}{$\pi'[X/A]^{pre}$}}
\noLine
\UI$(X_j \fCenter Y_j) [X/A_j]^{pre} [A]^{suc}$
\noLine
\UIC{$\vdots\ $}
\AXC{\ \ \ \ \ $\vdots$ \raisebox{1mm}{$\pi_{2.k}$}}
\noLine
\UI$(X_k \fCenter Y_k)[\overline{X}/\overline{A}_k]^{pre}$
\noLine
\UIC{$\!\!\!\!\!\!\!\!\!\!\!\!\!\!\!\!\!\!\!\! \iddots$}
\noLine
\TIC{$\ \ \ \ \ \ \ \ \ \ \ \ \ \ \ \,\rule[-5.2mm]{0mm}{0mm}\ddots\vdots\iddots\rule{0mm}{10mm}$ \raisebox{1mm}{$\pi_2[X/A]^{pre}$}}
\noLine
\def\fCenter{\vdash}
\UIC{$X \fCenter Y$}
\DisplayProof
\end{center}

If, in the original derivation, the history-tree of the cut formula $A$ (in the right-hand-side premise of the given cut application)  contains at most one leaf $A_l$ which is principal, then the height of the new cuts is lower than the height of the original cut.

If, in the original derivation, the history-tree of the cut formula $A$ (in the right-hand-side premise of the given cut application) contains more than one leaf $A_l$ which is principal, then we cannot conclude that the height of the new cuts is always lower than the height of the original cut (for instance, in the calculus introduced in Section \ref{D'.EAK}, this situation may arise when two ancestors of a cut formula are introduced as principal, and then are identified via an application of the rule Contraction). In this case, we observe that in each newly introduced application of the cut rule, both cut formulas are principal. Hence, we can apply the procedure described in the Principal stage and transform the original derivation in a derivation in which the cut formulas of the newly introduced cuts have strictly lower complexity than the original cut formula.

Finally, as to the subcase in which both cut formulas are parametric, consider a proof with at least one cut. The procedure is analogous to the previous case. Namely, following the history of one of the cut formulas up to the leaves, and applying the transformation steps described above, we arrive at a situation in which, whenever new applications of cuts are generated, in each such application at least one of the cut formulas is principal. To each such cut, we can apply (the symmetric version of) the Parametric stage described so far.  
\end{proof}

\commment{
\begin{proof} This is a generalization of the proof in \cite[Section 3.3, Appendix A]{Wan02}. For the sake of conciseness, we will expand only on the parts of the proof which depart from that treatment. The principal move goes mostly as in \cite{Wan02}, thanks to $C_8$. The only difference concerns the case of a cut application both premises of which are axioms. Condition C$'_8$ guarantees that this cut application can be eliminated.

As to the parametric move, the situation can be illustrated as follows:

\begin{center}
\AXC{\ \ $\vdots$ \raisebox{1mm}{$\pi_1$}}
\noLine
\def\fCenter{\vdash}
\UI$X \fCenter A$
\AXC{\ \ \ \ \,$\vdots$ \raisebox{1mm}{$\pi_{2.1}$}}
\noLine
\UI$(X_1 \fCenter Y_1) [A_u^{pre}]$
\AXC{$\,\,\cdots{\phantom{\vdash}}$}
\AXC{\ \ \ \ \,$\vdots$ \raisebox{1mm}{$\pi_{2.n}$}}
\noLine
\UI$(X_n \fCenter Y_n) [A_u^{pre}]$
\noLine
\TIC{\ \,$\rule[-5.2mm]{0mm}{0mm}\ddots\vdots\iddots\rule{0mm}{10mm}$ \raisebox{1mm}{$\pi_2$}}
\noLine
\UI$A \fCenter Y$
\BI$X \fCenter Y$
\DisplayProof
\end{center}
where we can suppose w.l.o.g.\ that $A$ is parametric in the conclusion of $\pi_2$. Then conditions $C_2$-$C_4$ make it possible to follow the history of this occurrence of $A$, because the history of $A$ takes the shape of a tree, of which we consider each leaf. Let $A_u$ in $X_i \vdash Y_i$ (for $1\leq i\leq n$) be one such uppermost-occurrence in the history-tree of the parametric cut formula $A$ occurring in $\pi_2$. We consider the following cases: case 1: $A_u$ is not parametric in the conclusion of $\pi_{2.i}$; case 2: $A_u$ is parametric in the conclusion of $\pi_{2.i}$. If case 1, then we have two subcases: subcase 1a: the subderivation $\pi_{2.i}$ has more than one node; subcase 1b: the subderivation $\pi_{2.i}$ has exactly one node. Notice that we are allowing a more complex type of axioms, and in particular we are not assuming that $A_u$ is displayed. If subcase 1a holds, then $A_u$ is a principal formula in the conclusion of a rule with at least one premise, and hence, by $C'_5$, $A_u$ occurs in display, as illustrated below on the left-hand side; then we can perform the following transformation:

\begin{center}
\footnotesize{
\begin{tabular}{lcr}
\bottomAlignProof
\AXC{\ \ \,$\vdots$ \raisebox{1mm}{$\pi_1$}}
\noLine
\UI$X \fCenter A$
\AXC{\ \ \ \ $\vdots$ \raisebox{1mm}{$\pi_{2.i}$}}
\noLine
\UI$A_u \fCenter Y'$
\noLine
\UIC{\ \,$\rule[-5.2mm]{0mm}{0mm}\ddots\vdots\iddots\rule{0mm}{10mm}$ \raisebox{1mm}{$\pi_2$}}
\noLine
\UI$A \fCenter Y$
\BI$X \fCenter Y$
\DisplayProof

 & $\rightsquigarrow$ &

\bottomAlignProof
\AXC{\ \ \,$\vdots$ \raisebox{1mm}{$\pi_1$}}
\noLine
\UI$X \fCenter A$
\AXC{\ \ \ \ $\vdots$ \raisebox{1mm}{$\pi_{2.i}$}}
\noLine
\UI$A_u \fCenter Y'$
\BI$X \fCenter Y'$
\noLine
\UIC{\ \ \ \ \ \ \ \ \ \,$\rule[-5.2mm]{0mm}{0mm}\ddots\vdots\iddots\rule{0mm}{10mm}$ \raisebox{1mm}{$\pi_2 [X/A]$}}
\noLine
\UI$X \fCenter Y$
\DisplayProof
 \\

\end{tabular}
}
\end{center}
where $\pi_2 [X/A]$ is the derivation obtained by substituting $X$ for every occurrence in the history-branch of $a$ leading to $A_u$. Notice that the assumption that $A$ is parametric in the conclusion of $\pi_2$ and that $A_u$ is principal in {\em inf} imply that $\pi_2$ has more than one node, and hence the transformation above results in a cut application of strictly lower height. Moreover,  condition $C_7$ implies that the substitution of $X$ for $A$ in $\pi_2$ gives rise to an admissible derivation $\pi_2[X/A]$ in the calculus (use $C_6$ for the symmetric case).
If subcase 1b, then $A_u = p_u$ is the principal formula of an axiom, as illustrated below in the derivation on the left-hand side:

\begin{center}
\footnotesize{
\begin{tabular}{lcr}
\bottomAlignProof
\AXC{\ \ \ $\vdots$ \raisebox{1mm}{$\pi_1$}}
\noLine
\UI$X \fCenter p$
\AXC{\raisebox{1mm}{$\pi_{2.i}$}}
\noLine
\UI$(X_i \fCenter Y_i) [p_u^{pre}, p^{suc}]$
\noLine
\UIC{\ \,$\rule[-5.2mm]{0mm}{0mm}\ddots\vdots\iddots\rule{0mm}{10mm}$ \raisebox{1mm}{$\pi_2$}}
\noLine
\UI$p \fCenter Y$
\BI$X \fCenter Y$
\DisplayProof

 & $\rightsquigarrow$ &

\bottomAlignProof
\AXC{\ \ \ $\vdots$ \raisebox{1mm}{$\pi_1$}}
\noLine
\UI$X \fCenter A$
\AXC{\raisebox{1mm}{$\pi'_{2.i}$}}
\noLine
\UI$p_u \fCenter Y' [p^{suc}]$
\BI$x \fCenter Y' [p^{suc}]$
\noLine
\UIC{\ \ \ $\vdots$ \raisebox{1mm}{$\pi''$}}
\noLine
\UI$(X' \fCenter Y') [X^{pre}, p^{suc}]$
\noLine
\UIC{\ \ \ \ \ \ \ \ \ \ \,$\rule[-5.2mm]{0mm}{0mm}\ddots\vdots\iddots\rule{0mm}{10mm}$ \raisebox{1mm}{$\pi_2 [X/p_u]$}}
\noLine
\UI$x \fCenter y$
\DisplayProof
 \\

\end{tabular}
}
\end{center}
where $(X_i \fCenter Y_i) [p_u^{pre}, p^{suc}]$ is an axiom, $p_u \fCenter Y' [p^{suc}]$ is some sequent which is display-equivalent to the first axiom, and which by $C''_5$ is an axiom as well. Further, if $\pi$ is the derivation consisting of applications of display postulates which transform the latter axiom into the former, then let $\pi'' = \pi[X/p_u^{pre}]$. As discussed in the previous case, the assumptions imply that $\pi_2$ has more than one node, so the transformation described above results in a  cut application of strictly lower height. Moreover, condition $C_7$ implies that the substitution of $X$ for $p$ in $\pi_2$ and in $\pi$ gives rise to admissible derivations $\pi_2[X/p]$ and $\pi''$ in the calculus (use $C_6$ for the symmetric case).

Suppose now we are in case 2, i.e., $A_u$ is parametric in the conclusion of $\pi_{2.i}$. Since $A_u$ is a leaf-node in the history-tree of $A$, this implies that $A_u$ is congruent only to itself in $\pi_{2.i}$. Hence, conditions $C_7$, the assumption that the original cut is strongly regular, and the type-alikeness of parameters ($C'_2$) imply that the sequent $(X' \fCenter Y') [X/A_u]^{pre}$ can also be obtained as the conclusion of $\pi_{2.i}$, by an instance of the same rule which introduces $(X' \fCenter Y') [A_u]^{pre}$, and that the derivation $\pi_2[X/A_u^{pre}]$ is admissible in the calculus. Therefore, the transformation below yields a derivation where $\pi_1$ is made redundant and the cut disappears.

\begin{center}
\footnotesize{
\begin{tabular}{lcr}
\bottomAlignProof
\AXC{\ \ \ $\vdots$ \raisebox{1mm}{$\pi_1$}}
\noLine
\UI$X \fCenter A$
\AXC{\ \ \ \ $\vdots$ \raisebox{1mm}{$\pi_{2.i}$}}
\noLine
\UI$(X' \fCenter Y') [A_u]^{pre}$
\noLine
\UIC{\ \ \ \ \ \ \ \ \ \,$\rule[-5.2mm]{0mm}{0mm}\ddots\vdots\iddots\rule{0mm}{10mm}$ \raisebox{1mm}{$\pi_2 [A_u^{pre}]$}}
\noLine
\UI$A \fCenter Y$
\BI$X \fCenter Y$
\DisplayProof

 & $\rightsquigarrow$ &

\bottomAlignProof
\AXC{\ \ \ \ $\vdots$ \raisebox{1mm}{$\pi_{2.i}$}}
\noLine
\UI$(X' \fCenter Y') [X/A_u^{pre}]$
\noLine
\UIC{\ \ \ \ \ \ \ \ \ \ \ \ \ \,\,$\rule[-5.2mm]{0mm}{0mm}\ddots\vdots\iddots\rule{0mm}{10mm}$ \raisebox{1mm}{$\pi_2 [X/A_u^{pre}]$}}
\noLine
\UI$X \fCenter Y$
\DisplayProof
\end{tabular}
}
\end{center}
\end{proof}} 
\section{Dynamic Epistemic Logics and their proof systems}
\label{sec:del}



In the present section, we first review the two best known logical systems in the family of dynamic epistemic logics, namely public announcement logic (PAL) \cite{Plaza}, and the logic of epistemic actions and knowledge (EAK) \cite{BMS}, focusing mainly on the latter one. Our presentation in subsection \ref{ssec:EAK}  is different but equivalent to the original version  from  \cite{BMS} (without common knowledge), and rather follows the presentation given in \cite{AMM} and in \cite{GKPLori}. In subsections \ref{Bad_proof_systems} and \ref{D.EAK} we discuss their existing proof-theoretic formalizations, particularly in relation to the  viewpoint of proof-theoretic semantics, and mention the system D.EAK as a promising approximation of a setting for proof-theoretic semantics. Finally, in subsection \ref{sec:final coalgebra sem}, we discuss the final coalgebra semantics, since this is a semantic environment in which all connectives of the language of D.EAK (and of its improved version D'.EAK) can be naturally interpreted.

\subsection{The logic of epistemic actions and knowledge}
\label{ssec:EAK}

The logic of epistemic actions and knowledge (further on EAK) is a logical framework which combines a multi-modal classical logic with a dynamic-type propositional logic. Static modalities in EAK are parametrized with agents, and their intended interpretation is epistemic, that is, $\langle \aga\rangle A$ intuitively stands for `agent $\aga$ thinks that $A$ might be the case'. Dynamic modalities in EAK are parametrized with epistemic \emph{action-structures} (defined below) and their intended interpretation is analogous to that of dynamic modalities in e.g.\ Propositional Dynamic Logic. That is, $\langle \alpha\rangle A$ intuitively stands for `the action $\alpha$ is executable, and after its execution $A$ is the case'. Informally, action structures loosely resemble Kripke models, and encode information about epistemic actions such as e.g.\ public announcements, private announcements to a group of agents, with or without (actual or suspected) wiretapping, etc. Action structures consist of a finite nonempty domain of action-states, a designated state, binary relations on the domain for each agent, and a precondition map. Each state in the domain of an action structure $\alpha$ represents the possible appearance of the epistemic action encoded by $\alpha$. The designated state represents the action actually taking place. Each binary relation of an action structure represents the type, or degree, of uncertainty entertained by the agent associated with the given binary relation about the action taking place; for instance, the agents' knowledge, ignorance, suspicions. Finally, the precondition function maps each state in the domain to a formula, which is intended to describe the state of affairs under which it is possible to execute the (appearing) action encoded by the given state. This formula encodes the \emph{preconditions} of the action-state. The reader is referred to \cite{BMS} for further intuition and concrete examples.

Let \textsf{AtProp} be a countable set of atomic propositions, and $\mathsf{Ag}$ be a  nonempty set (of agents). The set $\mathcal{L}$ of  formulas $A$  of the logic of epistemic actions and knowledge (EAK), and the set $\mathsf{Act}(\mathcal{L})$ of the {\em action structures} $\alpha$ {\em over} $\mathcal{L}$ are defined simultaneously as follows:

\begin{center}
$A := p\in \mathsf{AtProp} \mid \neg A \mid A\vee A \mid \langle\aga\rangle A \mid \langle\alpha\rangle A\;\; \ \ (\alpha\in \mathsf{Act}(\mathcal{L}), \aga\in \mathsf{Ag}),$

\end{center}
where an {\em action structure over} $\mathcal{L}$ is a tuple  $\alpha = (K, k, (\alpha_\aga)_{\aga\in\mathsf{Ag}}, Pre_\alpha)$, such that $K$ is a finite nonempty set,  $k\in K$,  $\alpha_\aga\subseteq K\times K$  and $Pre_\alpha: K\to \mathcal{L}$.

The symbol $Pre(\alpha)$ stands for $Pre_\alpha(k)$.  For each action structure $\alpha$ and every $i\in K$, let $\alpha_i := (K, i, (\alpha_\aga)_{\aga\in\mathsf{Ag}}, Pre_\alpha)$. Intuitively, the family of action structures $\{\alpha_i\mid k\alpha_\aga i\}$ encodes the uncertainty of agent $\aga$ about the action $\alpha=\alpha_k$ that is actually taking place. Perhaps the best known epistemic actions are {\em public announcements}, formalized as action structures $\alpha$ such that $K = \{k\}$, and $\alpha_\aga = \{(k, k)\}$ for all $\aga\in\mathsf{Ag}$. The logic of public announcements (PAL) \cite{Plaza} can then be subsumed as the fragment of EAK restricted to action structures of the form described above.
The connectives $\top$, $\bot$, $\wedge$, $\rightarrow$ and $\leftrightarrow$ are defined as usual.

Standard models for EAK are relational structures $M = (W, (R_\aga)_{\aga\in \mathsf{Ag}}, V)$ such that $W$ is a nonempty set, $R_\aga\subseteq W\times W$ for each $\aga\in \mathsf{Ag}$, and $V:\mathsf{AtProp}\to \mathcal{P}(W)$. The interpretation of the static fragment of the language is standard. For every Kripke frame $\mathcal{F} = (W, (R_\aga)_{\aga\in\mathsf{Ag}})$ and each  action structure  $\alpha$, let the Kripke frame $\coprod_{\alpha}\mathcal{F} : = (\coprod_{K}W, ((R\times \alpha)_\aga)_{\aga\in \mathsf{Ag}})$ be defined
as follows: $\coprod_{K}W$ is the $|K|$-fold coproduct of $W$ (which is set-isomorphic to $W\times K$), and $(R\times \alpha)_\aga$ is a   binary relation on $\coprod_{K}W$ defined as $$(w, i)(R\times \alpha)_\aga (u, j)\quad \mbox{ iff }\quad w R_\aga u\ \mbox{ and }\ i\alpha_\aga j.$$

 For every model $M$ and each action structure $\alpha$, let $$\coprod_{\alpha}M := (\coprod_{\alpha}\mathcal{F}, \coprod_{K}V )$$ be such that $\coprod_{\alpha}\mathcal{F}$ is defined as above, and $(\coprod_{K}V)(p): = \coprod_{K}V(p)$ for every $p\in \mathsf{AtProp}$. Finally,  let the {\em update} of $M$ with the action structure $\alpha$ be the submodel $M^\alpha: = (W^\alpha, (R^\alpha_\aga)_{\aga\in \mathsf{Ag}}, V^\alpha)$ of $\coprod_{\alpha}M$ the domain of which is the subset $$W^\alpha: = \{(w, j)\in \coprod_{K}W\mid M, w\Vdash Pre_\alpha(j)\}.$$
 Given this preliminary definition, formulas of the form $\langle\alpha\rangle A$ are interpreted as follows:
$$M, w\Vdash \langle\alpha \rangle A\quad \mbox{ iff } \quad M, w\Vdash  Pre_\alpha(k)  \mbox{ and } M^\alpha, (w, k)\Vdash A.$$

The model $M^\alpha$ is intended to encode the (factual and epistemic) state of affairs after the execution of the action $\alpha$. Summing up, the construction of $M^\alpha$ is done in two stages: in the first stage, as many copies of the original model $M$ are taken as there are `epistemic potential appearances' of the given action (encoded by the action states in the domain of $\alpha$); in the second stage, states in the copies are removed if their associated original state does not satisfy the preconditions of their paired action-state.


A complete axiomatization of EAK consists of copies of the axioms and rules of the minimal normal modal logic K  for each modal operator, either epistemic or dynamic, plus the following (interaction) axioms:
\begin{eqnarray}
\langle\alpha\rangle p & \leftrightarrow & (Pre(\alpha)\wedge p); \label{eq:facts}\\
\langle \alpha\rangle \neg A & \leftrightarrow & (Pre(\alpha)\wedge \neg\langle \alpha\rangle A ); \label{eq:neg}\\
\langle \alpha\rangle (A\vee B) & \leftrightarrow & (\langle \alpha\rangle A\vee \langle \alpha\rangle B ); \label{eq:vee}\\
\langle \alpha\rangle \langle\aga\rangle A & \leftrightarrow & (Pre(\alpha)\wedge \bigvee\{\langle\aga\rangle\langle\alpha_i\rangle A\mid k\alpha_\aga i\}). \label{eq:interact axiom}
\end{eqnarray}

The interaction axioms above can be understood as attempts at defining the meaning of any given dynamic modality $\langle\alpha\rangle$ in terms of its interaction with the other connectives. In particular, while axioms \eqref{eq:neg} and \eqref{eq:vee} occur also in other dynamic logics such as PDL, axioms \eqref{eq:facts} and \eqref{eq:interact axiom} capture the specific behaviour of epistemic actions. Specifically, axiom \eqref{eq:facts} encodes the fact that epistemic actions do not change the factual state of affairs, and axiom \eqref{eq:interact axiom} plausibly rephrases the fact that `after the execution of $\alpha$, agent $\aga$ thinks that $A$ might be the case' in terms of `there being some epistemic appearance of $\alpha$ to $\aga$ such that $\aga$ thinks that, after its execution, $A$ is the case'.
An interesting aspect of these axioms is that they work as rewriting rules which can be iteratively used to transform any EAK-formula into an equivalent one free of dynamic modalities. Hence, the completeness of EAK follows from the completeness of its static fragment, and EAK is not more expressive than its static fragment. However, and interestingly, there is an exponential gap in succinctness between equivalent formulas in the two languages \cite{Luz}.

Action structures are one among many possible ways to represent actions. Following \cite{GKPLori},
we prefer to keep a black-box perspective on actions, and to identify agents $\aga$ with the indistinguishability relation they induce on actions; so, in the remainder of the article, the role of the action-structures $\alpha_i$ for $k\alpha i$ will be played by actions $\beta$ such that $\alpha\aga\beta$, allowing us to reformulate \eqref{eq:interact axiom} as
$$\langle \alpha\rangle \langle\aga\rangle A \  \leftrightarrow \  (Pre(\alpha)\wedge \bigvee\{\langle\aga\rangle\langle\beta\rangle A \mid \alpha\aga\beta\}).$$

\commment{
Let \textsf{AtProp} be a countable set of atomic propositions. The set $\mathcal{L}$ of the formulas $A$ of (the single-agent\footnote{The multi-agent generalization of this simpler version is straightforward, and is given by taking the indexed version of the modal operators, axioms, and then by taking the interpreting relations (both in the models and in the action structures) over a set of agents.} version of) the logic of epistemic actions and knowledge (EAK), and the set $\mathsf{Act}(\mathcal{L})$ of the {\em action structures} $\alpha$ {\em over} $\mathcal{L}$ are defined simultaneously as follows:
\begin{center}
$A := p\in \mathsf{AtProp} \mid \neg A \mid A\vee A \mid \Diamond A \mid \langle\alpha\rangle A\;\; (\alpha\in \mathsf{Act}(\mathcal{L})),$
\end{center}
where an {\em action structure over} $\mathcal{L}$ is a tuple  $\alpha = (K, k, \alpha, Pre_\alpha)$, such that $K$ is a finite nonempty set,  $k\in K$,  $\alpha\subseteq K\times K$ and $Pre_\alpha: K\to \mathcal{L}$. Notice that, following \cite{KP}, the symbol $\alpha$ denotes {\em both} the action structure {\em and} the accessibility relation of the action structure. Unless explicitly specified otherwise, occurrences of this symbol are to be interpreted contextually: for instance, in  $j\alpha k$, the symbol $\alpha$ denotes the relation; in  $M^{\alpha}$, the symbol $\alpha$ denotes the action structure. Of course, in the multi-agent setting, each action structure comes equipped with {\em a collection} of accessibility relations indexed in the set of agents, and then the abuse of notation disappears.

The symbol $Pre(\alpha)$ stands for $Pre_\alpha(k)$. Let $\alpha_i = (K, i, \alpha, Pre_\alpha)$ for each action structure $\alpha = (K, k, \alpha, Pre_\alpha)$ and every $i\in K$. Intuitively, the actions $\alpha_i$ for $k\alpha i$ are intended to represent the uncertainty of the (unique) agent about the action that is actually taking place. Perhaps the best known actions are {\em public announcements}, formalized as action structures such that $K = \{k\}$, and $\alpha = \{(k, k)\}$. The logic of public announcements (PAL) \cite{Plaza} can then be subsumed as the fragment of EAK restricted to action structures of the form described above.
The connectives $\top$, $\bot$, $\wedge$, $\rightarrow$ and $\leftrightarrow$ are defined as usual.

The standard models for EAK are relational structures $M = (W, R, V)$ such that $W$ is a nonempty set, $R\subseteq W\times W$, and $V:\mathsf{AtProp}\to \mathcal{P}(W)$. The interpretation of the static fragment of the language is standard. For every Kripke frame $\mathcal{F} = (W, R)$ and each  $\alpha\subseteq K\times K$, let the Kripke frame $\coprod_{\alpha}\mathcal{F} : = (\coprod_{K}W, R\times \alpha)$ be defined\footnote{This definition is of course intended to be applied to relations $\alpha$ which are part of the specification of some action structure $\alpha$; in these cases, the symbol $\alpha$ in $\coprod_{\alpha}\mathcal{F}$ will be  understood as the action structure. This is why the abuse of notation turns out to be useful.} as follows: $\coprod_{K}W$ is the $|K|$-fold coproduct of $W$ (which is set-isomorphic to $W\times K$), and $R\times \alpha$ is the binary relation on $\coprod_{K}W$ defined as $$(w, i)(R\times \alpha) (u, j)\quad \mbox{ iff }\quad w R u\ \mbox{ and }\ i\alpha j.$$

 For every model $M = (W, R, V)$ and each action structure $\alpha = (K, k, \alpha, Pre_\alpha)$, let $$\coprod_{\alpha}M := (\coprod_{K}W, R\times \alpha, \coprod_{K}V )$$ be such that its underlying frame is defined as above, and $(\coprod_{K}V)(p): = \coprod_{K}V(p)$ for every $p\in \mathsf{AtProp}$. Finally,  let the {\em update} of $M$ with the action structure $\alpha$ be the submodel $M^\alpha: = (W^\alpha, R^\alpha, V^\alpha)$ of $\coprod_{\alpha}M$ the domain of which is the subset $$W^\alpha: = \{(w, j)\in \coprod_{K}W\mid M, w\Vdash Pre_\alpha(j)\}.$$
 Given this preliminary definition, formulas of the form $\langle\alpha\rangle A$ are interpreted as follows:
$$M, w\Vdash \langle\alpha \rangle A\quad \mbox{ iff } \quad M, w\Vdash  Pre_\alpha(k)  \mbox{ and } M^\alpha, (w, k)\Vdash A.$$


A complete axiomatization of EAK is given  by the axioms and rules for the minimal normal modal logic K, plus necessitation rules $\vdash A / \vdash [\alpha] A$ for each action structure $\alpha$, plus  the following axioms for each $\alpha$:
\begin{eqnarray}
\langle\alpha\rangle p & \leftrightarrow & (Pre(\alpha)\wedge p);\\
\langle \alpha\rangle \neg A & \leftrightarrow & (Pre(\alpha)\wedge \neg\langle \alpha\rangle A ); \\
\langle \alpha\rangle (A\vee B) & \leftrightarrow & (\langle \alpha\rangle A\vee \langle \alpha\rangle B ); \\
\langle \alpha\rangle \Diamond A & \leftrightarrow & (Pre(\alpha)\wedge \bigvee\{\Diamond\langle\alpha_i\rangle A\mid k\alpha i\}). \label{eq:interact axiom}
\end{eqnarray}

Action structures are one among many possible ways to represent actions. Following \cite{GKPLori},
we prefer to keep a black-box perspective on actions, and to identify agents $\aga$ with the indistinguishability relation they induce on actions; so, in the remainder of the article, the role of the action-structures $\alpha_i$ for $k\alpha i$ will be played by actions $\beta$ such that $\alpha\aga\beta$.
}

\subsection{The intuitionistic version of EAK}
\label{ssec:intEAK}
In \cite{AMM, KP}, an analysis of PAL and EAK has been given from the point of view of algebraic semantics, resulting in the definition of the intuitionistic counterparts of PAL and EAK. In the present subsection, we briefly review the definition of the latter one, as it reveals a more subtle interaction between the various modalities, thus preparing the ground for the even richer picture that will arise from the proof-theoretic analysis.

Let \textsf{AtProp} be a countable set of atomic propositions, and let $\mathsf{Ag}$ be a nonempty set (of agents). The set $\mathcal{L}$(m-IK) of the formulas $A$ of the multi-modal version m-IK of Fischer Servi's intuitionistic modal logic IK are inductively defined as follows:
\[A := p\in \mathsf{AtProp} \mid \bot \mid A\vee A \mid A\wedge A\mid A\rightarrow A \mid \langle\aga\rangle A \mid [\aga] A\  \ \  (\aga\in \mathsf{Ag})\]

The logic  m-IK is the smallest set of formulas in the language $\mathcal{L}$(m-IK) (where $\neg A$ abbreviates as usual $A \rightarrow \bot$) containing the following axioms and closed under modus ponens and necessitation rules:
\begin{center}
\begin{tabular}{rl}
\mc{2}{c}{\textbf{Axioms}}                                                                                                                 \\
       & \\
$\phantom{FS2}$  & $A\rightarrow (B\rightarrow A)$                                                                                              \\
       & $(A\rightarrow (B\rightarrow C))\rightarrow ((A\rightarrow B)\rightarrow (A\rightarrow C))$  \\
       & $A\rightarrow (B\rightarrow A\pand B)$                                                                                  \\
       & $A\pand B \rightarrow A$                                                                                                        \\
       & $A\pand B \rightarrow B$                                                                                                        \\
       & $A\rightarrow A \vee B$                                                                                                           \\
       & $B\rightarrow A \vee B$                                                                                                           \\
       & $(A\rightarrow C)\rightarrow ((B\rightarrow C)\rightarrow (A\vee B\rightarrow C))$                \\
       & $\bot\rightarrow A$                                                                                                                   \\
\end{tabular}
\end{center}

\begin{center}
\begin{tabular}{rl}
        & $[\aga] (A\rightarrow B)\rightarrow ([\aga] A\rightarrow [\aga] B)$                                          \\
        & $\langle\aga\rangle (A\vee B)\rightarrow \langle\aga\rangle A\vee \langle\aga\rangle B$      \\
        & $\neg\langle\aga\rangle \bot$                                                                                                  \\
FS1 & $\langle\aga\rangle (A\rightarrow B)\rightarrow ([\aga]  A\rightarrow \langle\aga\rangle B)$  \\
FS2 & $(\langle\aga\rangle A\rightarrow [\aga] B)\rightarrow [\aga]  (A\rightarrow B)$                      \\
       & $\phantom{(A\rightarrow (B\rightarrow C))\rightarrow ((A\rightarrow B)\rightarrow (A\rightarrow C))}$  \\
\mc{2}{c}{\textbf{Inference Rules}}                                                                                                       \\
       & \\
MP  & if $\vdash A\rightarrow B$ and $\vdash A$, then $\vdash B$                                                    \\
Nec & if $\vdash A$, then $\vdash [\aga]  A$                                                                                       \\
\end{tabular}
\end{center}

\noindent To define the language of the intuitionistic counterpart of EAK, let \textsf{AtProp} be a countable set of atomic propositions, and let $\mathsf{Ag}$ be a nonempty set. The set $\mathcal{L}$(IEAK) of the formulas $A$ of  the intuitionistic logic of epistemic actions and knowledge (IEAK), and the set $\mathsf{Act}(\mathcal{L})$ of the {\em action structures} $\alpha$ {\em over} $\mathcal{L}$ are defined simultaneously as follows:
\begin{center}
$A := p\in \mathsf{AtProp} \mid \bot\mid A\rightarrow A \mid A\vee A \mid A\wedge A\mid   \langle\aga\rangle A\mid [\aga] A\mid \langle\alpha\rangle A\mid [\alpha] A,$
\end{center}
where $\aga\in \mathsf{Ag}$, and an {\em action structure} $\alpha$ {\em over} $\mathcal{L}$(IEAK) is defined in just the same way as action structures in section \ref{ssec:EAK}. Then, the logic IEAK is defined in a Hilbert-style presentation which includes the axioms and rules of m-IK plus the Fischer Servi axioms FS1 and FS2 for each dynamic modal operator, plus the following axioms  and rules:

\begin{center}
\begin{tabular}{rl}
\mc{2}{c}{\textbf{Interaction Axioms}}                                                                                                                  \\
     & \\
     & $\lc\alpha\rc p \leftrightarrow Pre(\alpha) \pand p$                                                                                         \\
     & $\ls\alpha\rs p  \leftrightarrow Pre(\alpha) \rightarrow  p$                                                                               \\
     & \\
     & $\lc\alpha\rc \bot \leftrightarrow \bot$                                                                                                              \\
     & $\lc\alpha\rc \top \leftrightarrow Pre(\alpha)$                                                                                                  \\
     & $\ls\alpha\rs \top \leftrightarrow  \top$                                                                                                             \\
     & $\ls\alpha\rs \bot \leftrightarrow \neg Pre(\alpha)$                                                                                          \\
$\phantom{Nec}$     & $\phantom{\lc\alpha\rc (A \rightarrow B) \leftrightarrow Pre(\alpha) \pand (\lc\alpha\rc A \rightarrow \lc\alpha\rc B)}$     \\
\end{tabular}

\begin{tabular}{rl}
     & $\ls\alpha\rs (A\pand B) \leftrightarrow \ls\alpha\rs A \pand \ls\alpha\rs B$                                                     \\
     & $\lc\alpha\rc (A\pand B)\leftrightarrow  \lc\alpha\rc A \pand \lc\alpha\rc B$                                                     \\
     & \\
     & $\lc\alpha\rc (A\vee B) \leftrightarrow \lc\alpha\rc A \vee \lc\alpha\rc B$                                                        \\
     & $\ls\alpha\rs (A\vee B) \leftrightarrow Pre(\alpha) \rightarrow  (\lc\alpha\rc A \vee \lc\alpha\rc B)$                \\
     & \\
     & $\lc\alpha\rc (A \rightarrow B) \leftrightarrow Pre(\alpha) \pand (\lc\alpha\rc A \rightarrow \lc\alpha\rc B)$     \\
     & $\ls\alpha\rs (A \rightarrow B)\leftrightarrow \lc\alpha\rc A \rightarrow \lc\alpha\rc B$                                     \\
$\phantom{Nec}$     & $\phantom{\lc\alpha\rc (A \rightarrow B) \leftrightarrow Pre(\alpha) \pand (\lc\alpha\rc A \rightarrow \lc\alpha\rc B)}$     \\
\end{tabular}

\begin{tabular}{rl}
     & $\lc\alpha\rc \lc\aga\rc A \LRA Pre(\alpha)\pand \bigvee\{\lc\aga\rc\lc\beta\rc A \mid \alpha\aga\beta\}$       \\
     & $\ls\alpha\rs\lc\aga\rc A \LRA Pre(\alpha)\rightarrow \bigvee\{\lc\aga\rc\lc\beta\rc A \mid \alpha\aga\beta\}$ \\
     & \\
     & $\ls\alpha\rs[\aga] A \LRA Pre(\alpha)\rightarrow  \bigwedge\{[\aga]\ls\beta\rs A \mid \alpha\aga\beta\}$      \\
     & $\lc\alpha\rc[\aga] A \LRA Pre(\alpha)\pand \bigwedge\{[\aga]\ls\beta\rs A \mid \alpha\aga\beta\}$              \\
$\phantom{Nec}$     & $\phantom{\lc\alpha\rc (A \rightarrow B) \leftrightarrow Pre(\alpha) \pand (\lc\alpha\rc A \rightarrow \lc\alpha\rc B)}$     \\
\end{tabular}

\begin{tabular}{rl}
\mc{2}{c}{}                                                                                                                                                             \\
\mc{2}{c}{\textbf{Inference Rules}}                                                                                                                        \\
$\phantom{Nec}$     & $\phantom{\lc\alpha\rc (A \rightarrow B) \leftrightarrow Pre(\alpha) \pand (\lc\alpha\rc A \rightarrow \lc\alpha\rc B)}$     \\
Nec & if $\vdash A$, then $\vdash \ls\alpha\rs A$                                                                                               \\
\end{tabular}
\end{center}


\subsection{Proof theoretic formalisms for PAL and DEL}\label{Bad_proof_systems}

In the present subsection, we discuss the most relevant existing proof-theoretic accounts \cite{Balbiani,NegriM1,NegriM2,Alexandru,Dyckhoff,Aucher1,Aucher2,Aucher} for the logic of public announcements \cite{Plaza} and for the logic of epistemic actions and knowledge \cite{BMS}.

\paragraph{Labelled tableaux for PAL. } In \cite{Balbiani}, a labelled tableaux system is proposed for public announcement logic. This system is sound and complete with respect to the semantics of PAL.
Moreover, the computational complexity of this tableaux system is  shown to be  optimal for satisfiability checking in the language of PAL.
The system manipulates triples, called labelled formulas, of the form $\langle \mu, n, \phi \rangle$  such that $\mu$ is a (possibly empty) list of PAL-formulas, $n$ is a natural number,  and $\phi$ is a PAL-formula. Intuitively, the tuple  $\langle \mu, n \rangle$ stands for an epistemic state of the model updated with a sequence of announcements encoded by $\mu$. To give a closer impression of this tableaux system, consider the following rule:

\[
\AxiomC{$\langle (\alpha_1,..., \alpha_k), n, \neg K_a A \rangle $}
\LeftLabel{$R\widehat{K}$}
\RightLabel{$n'$ fresh}
\UnaryInfC{$\langle \epsilon, n',\neg [\alpha_1]...  [\alpha_k] A \rangle: \langle a,n,n' \rangle$}
\DisplayProof
\]

\vspace{5px}

This rule can be read as follows: if a state $n$ does not satisfy $K_a A$ after the sequence of announcements $\alpha_1,..., \alpha_k$, then at least one of its $R_a$-successor states $n'$ in the original model, represented by the tuple $ \langle \epsilon, n' \rangle$ in the rule, must survive the updates and not satisfy $A$. Hence, $ \langle \epsilon, n' \rangle$ must satisfy  the formula $\langle \alpha_1\rangle... \langle \alpha_k\rangle\neg A$, which is classically equivalent to $\neg[\alpha_1]...[\alpha_k] A.$

Clearly, rules such as this one incorporate the relational semantics of PAL.  This is not satisfactory from the point of view of proof-theoretic semantics, since it prevents these rules from providing an independent contribution to the meaning of the logical connectives. A second issue, of a more technical nature, is that the statement of this rule is grounded on the classical interdefinability between the box-type and diamond-type modalities. This implies that if we dispense with the classical propositional base, we would need to reformulate this rule. Hence the calculus is non-modular in the sense discussed in section \ref{ssec:Wansing}.

\paragraph{Labelled sequent calculi for PAL. } In \cite{NegriM1} and \cite{NegriM2},    cut-free labelled sequent calculi for PAL are introduced with  truthful  and non-truthful announcements, respectively.
Also in this case, the statement of the rules of these calculi incorporates the relational semantics. For instance, this is illustrated here below for the case of truthful announcements.

\[
\AxiomC{$w:^{\mu, \alpha} A, w:^\mu [\alpha]A, w:^\mu \alpha, \Gamma \vdash \Delta $}
\RightLabel{$L [\ ]{:}^\mu$}
\UnaryInfC{$w:^\mu [\alpha] A, w:^\mu \alpha, \Gamma \vdash \Delta$}
\DisplayProof
\qquad
\AxiomC{$w:^\mu \alpha, \Gamma \vdash \Delta, w:^{\mu, \alpha} A$}
\RightLabel{$R[\ ]{:}^\mu$}
\UnaryInfC{$\Gamma \vdash \Delta, w:^\mu [\alpha] A $}
\DisplayProof
\]

In the rules above, symbols such as $w:^{\mu} A$ can be rearranged and then understood as the labelled formulas $\langle \mu, w, A\rangle$  in the tableaux system presented before.   The only difference is that $w$ is an individual variable which stands for a given state of a relational structure, and not for a natural number; however, this difference is completely nonessential. Under this interpretation, it is clear that e.g.~the rule  $L [\ ]{:}^\mu$ encodes the relational satisfaction clause of $[\alpha]A$, when $\alpha$ is a truthful announcement.
The following rules are also part of the calculi.


\[
\AxiomC{$v: A, w:K_a A, wR_av, \Gamma \vdash \Delta $}
\RightLabel{$LK_a$}
\UnaryInfC{$w:K_aA, wR_av, \Gamma \vdash \Delta$}
\DisplayProof
\qquad
\AxiomC{$wR_av, \Gamma \vdash \Delta, v: A$}
\RightLabel{$RK_a$}
\UnaryInfC{$\Gamma \vdash \Delta, w: K_aA $}
\DisplayProof
\]

 Besides the individual variables $w$ and $v$, the rules above feature the binary relation symbol $R_a$ encoding the epistemic uncertainty of the agent $a$. Since  the relational semantics is imported in the definitions of the rules, the same issues  pointed out in the case of the tableaux system appear also here. On the other hand, importing the relational semantics allows for some remarkable extra power. Indeed, the interaction axiom \eqref{eq:interact axiom} can be derived from the four rules above, which deal with static and dynamic modalities in complete independence of one another.

\paragraph{Merging different logics. } In \cite{Alexandru} and \cite{Dyckhoff}, sequent calculi have been defined for dynamic logics arising in an algebraic way, motivated by program semantics, with a methodology introduced by  \cite{Abramsky}. Essentially, this approach is based on the idea of merging a linear-type logic of actions (more precisely, \cite{Moortgat95}) with a classical or intuitionistic logic of propositions. Following the treatment of \cite{Abramsky}, this logic arises semantically as the logic of certain quantale-modules, namely of maps $\star: M\times Q\rightarrow M$,  preserving complete joins in each coordinate, where $Q$ is a quantale and $M$ is a complete join-semilattice.  Each $q\in Q$ induces a completely join-preserving operation $(-\star q): M\to M$, which, by general order-theoretic facts, has a unique right adjoint $[q]: M\to M$. That is, for every $m, m'\in M$,
\begin{equation}
\label{adjunction}
m\star q\leq m' \ \mbox{ iff }\ m\leq [q] m'.
\end{equation}
 Intuitively, the elements of $Q$ are actions (or rather, inverses of actions), and $M$ is an algebra interpreting propositions, which in the best known cases arises as the complex algebra of some relational structure, and therefore will be e.g.~a complete and atomic Boolean algebra with operators. Thus the framework of \cite{Alexandru} and \cite{Dyckhoff} is vastly more general than dynamic epistemic logic as it is usually understood. A remarkable feature of this setting is that the dynamic operations which are intended as the interpretation of the primitive dynamic  connectives arise in this setting as adjoints of ``more primitive'' operations; thus, and much more importantly, every dynamic modality comes with its adjoint.  Moreover, every epistemic modality (parametrized as usual with an agent) comes in two copies: one as an operation on $Q$ and one as an operation on $M$, and these two copies are stipulated to interact in a suitable way. More formally, the semantic structures are defined as tuples $(M, Q, \{ f_A\} _{A \in Ag})$, where $M$ and $Q$ are as above, and for every agent $A$, $f_A$ is a pair of completely join preserving maps $(f^{M}_A: M \to M, f^Q_A: Q \to Q)$ such that the following three conditions hold:

\begin{equation}
f^Q_A(q \cdot q' )\leq f^Q_A(q) \cdot  f^Q_A(q' )
\end{equation}

\begin{equation}\label{Interaction}
f^M_A(m \star q)\leq f^M_A(m) \star  f^Q_A(q )
\end{equation}

\begin{equation}
1 \leq f^Q_A(1).
\end{equation}

Intuitively, for every agent $A$, the operation  $f^M_A$ is the diamond-type  modal operator encoding the epistemic uncertainty of $A$, and  $f^Q_A$  is the  diamond-type modal operator encoding the epistemic uncertainty of $A$ about the action that is actually taking place. Given this understanding, condition (\ref{Interaction}) hardcodes  the following well-known DEL-axiom  in the semantic structures above:

\begin{equation}\label{axiomDEL}
\bigwedge\{ [A][ q'] m\mid q A q'\} \vdash [ q ] [A] m.
\end{equation}

\noindent where the notation $q Aq'$ means that the   action $q'$ is indistinguishable from $q$ for the agent $A$. In (\ref{Interaction}), the element $f^Q_A(q)$ encodes the join of all such actions. Because $\star$ is bilinear, we get:

$$f^M_A(m)\star f^Q_A(q) = f^M_A(m)\star \bigvee_Q\{q'\mid qAq'\} = \bigvee_M\{f^M_A(m)\star q'\mid qAq'\}. $$
Hence, (\ref{Interaction}) can be equivalently rewritten in the form of a rule as follows:
\[
\AX $\bigvee\{f^M_A(m)\star q'\mid qAq'\} \fCenter m'$
\UI $ f^M_A(m\star q)\fCenter m'$
\DisplayProof
\]
Applying adjunction to the premise and to the conclusion gets us to:
\[
\AX $m\fCenter \bigwedge\{ [A][q']m' \mid qAq'\} $
\UI $ m\fCenter [q][A]m'$
\DisplayProof
\]
Finally, rewriting the rule above back as an inequality gets us to (\ref{axiomDEL}).
The  first pioneering proposal is the sequent calculus developed in \cite{Alexandru}. This calculus manipulates two kinds of sequents: Q-sequents, of the form $\Gamma \vdash_Q q$, where $q$ is an action and $\Gamma$ is a sequence of actions and agents, and M-sequents, of the form $\Gamma \vdash_M m$, where $m$ is a proposition and $\Gamma$ is a sequence of propositions, actions and agents. These different entailment relations need to be brought together by means of rules of hybrid type, such as the left one below.

\[
\AxiomC{$m' \vdash_M m  $}
\AxiomC{$\Gamma_Q \vdash_Q q$}
\RightLabel{$Dy$L}
\BinaryInfC{$[q] m', \Gamma_Q \vdash_M m$}
\DisplayProof
\qquad
\AxiomC{$\Gamma, q \vdash_M m$}
\RightLabel{$Dy$R}
\UnaryInfC{$\Gamma \vdash_M [q] m$}
\DisplayProof
\]

As to the soundness of the rule $DyL$, let us identify the logical symbols with their interpretation, assume that the inequalities $m\leq m'$ and $\Gamma_{Q}\leq q$ are satisfied on  given $M$ and $Q$ respectively,\footnote{where $\Gamma_Q$ now stands for a suitable product in $Q$ of the interpretations of its individual components.} and prove that $[q] m', \Gamma_Q \leq m$ in $M$. Indeed, $$[q]m'\star \Gamma_{Q}\leq [q]m'\star q\leq m'\leq m.$$

The first inequality follows from $\Gamma_{Q}\leq q$ and $\star$ being order-preserving in its second coordinate; the second inequality is obtained by applying the right-to-left direction of (\ref{adjunction}) to the inequality $[q]m'\leq [q]m'$; the last inequality holds by assumption. The soundness of $Dy$R follows likewise from the left-to-right direction of (\ref{adjunction}).

This calculus is shown to be both sound and complete w.r.t.~this algebraic semantics. The setting illustrated above is powerful enough that sufficiently many epistemic actions can be encoded in it to support the formalisation of various variants of the Muddy Children Puzzle in which children might be cheating. However, cut-elimination for this system has not been proven.

In \cite{Dyckhoff}, a similar framework is presented  which exploits the same basic ideas, and results in a system with more explicit proof-theoretic performances and which is shown to be cut-free.  However, like its previous version, this system focuses on a logic semantically arising from an algebraic setting which is vastly more general than the usual relational setting. The issue about how it precisely restricts to the usual setting, and hence how the usual DEL-type logics can be captured within this more general  calculus, is left largely implicit.  The semantic setting of \cite{Alexandru}, where propositions are interpreted as elements of a right module $M$ on a quantale $Q$,  specialises in \cite{Dyckhoff} to a setting in which $M = (\mathbb{A}, \{\Box_A, \blacklozenge_A: A\in Ag\})$, where $\mathbb{A}$ is a Heyting algebra and, for every agent $A$, the modalities $\Box_A$  and $\Bdia_A$ are adjoint to each other. Notice that $\Diamond_A$, which in the classical case is defined as $\neg \Box_A \neg$,  cannot be expressed any more in this way, and needs to be added as a primitive connective, which has not been done in  \cite{Dyckhoff}.

As mentioned before, the design of this calculus gives a more explicit account than its previous version to certain technical aspects which come from the semantic setting; for instance, the semantic setting motivating both papers features two domains of interpretation (one for the actions and one for the propositions), which are intended to give rise to two consequence relations  which are to be treated on a par and then made to interact. In \cite{Alexandru}, the calculus manipulates sequents which are made of heterogeneous components. For instance, in action-sequents $\Gamma\vdash_Q q$, the precedent $\Gamma$ is a sequence in which both actions and agents may occur. Since $\Gamma$ is to be semantically interpreted as an element of $Q$, they need to resort to a rather clumsy technical solution which consists in interpreting, e.g.\ the sequence $(q, A, q')$ as the element $f_A^Q(q)\cdot q'$.   In  \cite{Dyckhoff}, the calculus is given in a {\em deep-inference} format; namely, rules of this calculus make it possible to manipulate formulas inside a given context. This more explicit bookkeeping makes it possible to prove the cut-elimination, following the original Gentzen strategy. However, the presence of two different consequence relations and the need to account for their interaction calls for the development of an extensive theory-of-contexts, in which no less than five different types of contexts need to be introduced. This also causes a proliferation of rules, since the possibility of performing some inferences depends on the type of  context under which they are to be performed.

\paragraph{Calculi for updates. }In \cite{Aucher1}, a formal framework accounting for dynamic revisions or updates is introduced, in which the revisions/updates are formalized using the turnstile symbol. This framework has aspects similar to Hoare logic: indeed, it manipulates sequent-type structures of the form $\phi, \phi'\models \phi''$, such that $\phi$ and $\phi''$ are formulas of proposition-type, and $\phi'$ is a formula of event-type. This formalism has also common aspects to \cite{Alexandru} and \cite{Dyckhoff}: indeed, both proposition-type and event-type (i.e.~action-type) formulas allow epistemic modalities for each agent, respectively accounting for the agent's epistemic uncertainty about the world and about the actions actually taking place.

In \cite{Aucher} and \cite{Aucher2}, three formal calculi are introduced,  manipulating the syntactic structures above. Given that the turnstile encodes the update rather than a consequence relation or entailment,  the syntactic structures above are not sequents in a proper sense. Rather than sequent calculi, these calculi should be rather regarded as being of natural deduction-type. As such, the design of these calculi presents many issues from a proof-theoretic semantic viewpoint; to mention only one, multiple connectives are introduced at the same time, for instance in the following rule:

\[
\AxiomC{$\phi, \phi' \vdash \phi''$}
\RightLabel{$R_5.$}
\UnaryInfC{$\langle Bj \rangle (\phi \wedge Pre(p')), \langle Bj \rangle (\phi' \wedge p') \vdash \langle Bj \rangle \phi'' $}
\DisplayProof
\]

\vspace{3px}
These calculi are shown to be sound and complete w.r.t.\ three semantic consequence relations, respectively.

\subsection{First attempt at a display calculus for EAK}\label{D.EAK}

In \cite{GKPLori}, a display-style sequent calculus D.EAK has been introduced, which is sound with respect to the final coalgebra semantics (cf.\ section \ref{sec:final coalgebra sem}), and complete w.r.t.~EAK, of which it is a conservative extension. Moreover,  Gentzen-style cut elimination  holds for D.EAK. Finally, this system is defined independently of the relational semantics of EAK, and therefore is suitable for a fine-grained  proof-theoretic semantic analysis.

\noindent Here below, we are not going to report on it in detail, but we limit ourselves to mention the structural rules which capture the specific features of EAK:


\smallskip
\begin{center}
{\scriptsize{
\begin{tabular}{rl}
\mc{2}{c}{\textbf{\normalsize{Structural Rules with Side Conditions}}} \\

 & \\

\AX$Pre(\alpha)\,; \{\alpha\} A \fCenter X$
\LeftLabel{\fns$reduce_L$}
\UI$\{\alpha\} A \fCenter X$
\DisplayProof &
\AX$X \fCenter Pre(\alpha) > \{\alpha\} A$
\RightLabel{\fns$reduce_R$}
\UI$X \fCenter \{\alpha\} A$
\DisplayProof \\

&\\

\AXC{$Pre(\alpha)\,; \{\alpha\} \{ \aga \} X \fCenter Y$}
\LeftLabel{\fns\emph{swap-in}$_{L}$}
\UIC{$Pre(\alpha)\,; {\{ \aga \}\{\beta\}_{\alpha\aga\beta}\, X} \fCenter Y$}
\DisplayProof &
\AXC{$Y \fCenter Pre(\alpha) > \{\alpha\} \{ \aga \} X$}
\RightLabel{\fns\emph{swap-in}$_{R}$}
\UIC{$Y \fCenter Pre(\alpha) > {\{ \aga \} \{\beta\}_{\alpha\aga\beta}\,X}$}
\DisplayProof \\

 & \\

\AX$\Big ( Pre(\alpha)\,; \{\aga\}\{\beta\} \,X \fCenter Y\mid \alpha\aga\beta\Big)$
\LeftLabel{\emph{swap-out}$_{L}$}
\UI$Pre(\alpha)\,;  \{\alpha\} \{\aga\} X \fCenter \Bigsemic\Big(Y\mid \alpha\aga\beta\Big)$
\DisplayProof
&
\AX$\Big(Y \fCenter Pre(\alpha) > \{\aga\}\{\beta\}\,X \mid \alpha\aga\beta\Big)$
\RightLabel{\emph{swap-out}$_{R}$}
\UI$\Bigsemic\Big(Y\mid \alpha\aga\beta\Big) \fCenter Pre(\alpha) > \{\alpha\} \{\aga\} X$
\DisplayProof\\

 & \\
\end{tabular}
}}
\end{center}

\noindent The {\em swap-out} rules do not have a fixed arity; they have as many premises as there are actions $\beta$ such that $\alpha\aga\beta$. In the conclusion, the symbol $\Bigsemic \Big(Y \mid \alpha\aga\beta \Big)$ refers to a string $(\cdots(Y\,; Y) \,;\cdots \,; Y)$ with $n$ occurrences of $Y$, where $n = |\{\beta\mid \alpha\aga\beta\}|$.

\begin{center}
{\scriptsize{
\begin{tabular}{rl}
\mc{2}{c}{\textbf{\normalsize{Operational Rules with Side Conditions}}} \\

 & \\

\AX$Pre(\alpha)\,; \{\alpha\} A \fCenter X$
\LeftLabel{\fns$reverse_L$}
\UI$Pre(\alpha)\,; \ls\alpha\rs A \fCenter X$
\DisplayProof
 &
\AX$X \fCenter Pre(\alpha) > \{\alpha\} A$
\RightLabel{\fns$reverse_R$}
\UI$X \fCenter Pre(\alpha) > \lc\alpha\rc A $
\DisplayProof\\

 & \\
\end{tabular}
}}
\end{center}
\smallskip

The main issues of D.EAK from the point of view of Wansing's criteria are linked with the presence of the formula $Pre(\alpha)$: namely, the \emph{swap-in} and \emph{swap-out} rules violate the principle that all parametric variables should occur unrestricted. Indeed, the occurrences of the formula $Pre(\alpha)$ in these rules is easily seen to be parametric, since $Pre(\alpha)$ occurs both in the premises and in the conclusion. Since $Pre(\alpha)$ is (the metalinguistic abbreviation of) a formula, it  is a structure of a very restricted shape. As to the \emph{swap-out} rules, it is not difficult to see, e.g.~semantically (cf.~\cite[Definition 4.2.]{KP}), that the occurrences of $Pre(\alpha)$ can be removed both in the premises and in the conclusion without affecting either  the soundness of the rule or the proof power of the system; this entirely remedies the problem. Likewise, as to \emph{swap-in}, it is not difficult to see that the occurrences of $Pre(\alpha)$ can be removed
in the premises, but not in the conclusion. However, even modified in this way, the \emph{swap-in} rules would not be satisfactory. Indeed, the new form of \emph{swap-in} would introduce $Pre(\alpha)$  in the conclusion. Since $Pre(\alpha)$ is a metalinguistic abbreviation of a formula which as such has no other specific restrictions, the occurrence of $Pre(\alpha)$ in the conclusion of \emph{swap-in} must also be regarded as parametric. However, we still would not be able to substitute arbitrary structures for it, which is the source of the problem.
This problem would be solved if $Pre(\alpha)$ could be expressed, as a structure, purely in terms of the parameter $\alpha$ and structural constants (but no structural variables). If this was the case, \emph{swap-in} would encode the relations between all these logical constants, and all the occurring structural variables would be unrestricted.

Secondly, the rules \emph{reduce} violate condition C$_1$: indeed, in each of them, a formula in the premisses, namely $Pre(\alpha)$, is not a subformula of any formula occurring in the conclusion. Together with the cut-elimination, condition C$_1$ guarantees the subformula property (cf.~\cite[Theorem 4.3]{Belnap}), but is not itself essential  for the cut-elimination, and indeed,  cut-elimination has been proven for D.EAK (albeit not {\em \`a la} Belnap). The specific way in which \emph{reduce}  violates C$_1$ is also not a very serious one. Indeed, if the formula $Pre(\alpha)$ could be expressed in a structural way, this violation would disappear.

This solution cannot be implemented in D.EAK because the language of D.EAK does not have enough expressivity to talk about $Pre(\alpha)$ in any other way than as an arbitrary formula, which needs to be introduced via weakening or via identity (if atomic). Being able to account for $Pre(\alpha)$ in a satisfactory way from a proof-theoretic semantic perspective would require being able to state rules which, for any $\alpha$, would introduce $Pre(\alpha)$ {\em specifically}, thus capturing its proof-theoretic meaning. Thus, by having structural  and operational rules for $Pre(\alpha)$, we would solve many problems in one stroke: on the one hand, we would gain the {\em practical} advantage of achieving the satisfaction of C$_1$, thus guaranteeing the subformula property; on the other hand,  and  more importantly, from a {\em methodological} perspective, we would be able to have a setting in which the occurrences of $Pre(\alpha)$ are not to be regarded as side formulas, but rather, they would occur as {\em structures}, on a par with all the other structures they would be interacting with.

Finally, the only operational rules violating Wansing's  \emph{separation} principle (cf.\ subsection \ref{ssec:Wansing}) are the  \emph{reverse} rules: 

\smallskip
\begin{center}
\begin{tabular}{c}
\AxiomC{$ Pre(\alpha);  \{\alpha\} A\vdash X$}
\LeftLabel{$rev_{L}$}
\UnaryInfC{$ Pre (\alpha); [\alpha] A \vdash X$}
\DisplayProof
\qquad
\qquad
\AxiomC{$ X \vdash  Pre (\alpha) > \{ \alpha \} A$}
\RightLabel{$rev_{R}$}
\UnaryInfC{$ X \vdash  Pre (\alpha) > \langle \alpha \rangle A$}
\DisplayProof
\end{tabular}
\end{center}
\smallskip
Here again, the problem comes from the fact that the language is not expressive enough to capture the principles encoded in the rules above at a purely structural level. In this operational formulation, these rules are to participate, in our view improperly, in the proof-theoretic meaning of the connectives $[\alpha]$ and $\langle \alpha\rangle$. Thus, it would be desirable that the rules above could be either derived, so that they disappear altogether, or alternatively, be reformulated as structural rules.

\commment{
\subsection{Coalgebraic semantics}
\label{sec:final coalgebra sem}
In order to provide a semantic justification for the soundness of the display postulates of the dynamic connectives, in \cite{GKPLori} the final coalgebra was used as a semantic environment for the calculus D.EAK. Moreover, D.EAK was proved to be sound and complete w.r.t.\ EAK, of which it was shown to be a conservative extension. Before moving on and introducing the revised version of D.EAK, in the present subsection we briefly review the final coalgebra semantics for EAK, which we will use in section \ref{ssec:properties of D'.EAK} to show the soundness of D'.EAK.\footnote{This semantics specifically applies to the classical base. Analogous ideas can be developed for weaker propositional bases, but in the present paper we do not pursue them further.}

\medskip

Modal formulas $A$ are interpreted in Kripke models $\Mbb=(W,R,V)$ as subsets of their domains $W$, and we write $\sem{A}_M\subseteq W$ for their interpretation. Equivalently, we can describe the interpretation of $A$ in each Kripke model via the final coalgebra\footnote{Informally, the \emph{final coalgebra} can be thought of as the Kripke model $\bbZ$ obtained by taking the disjoint union of all Kripke models, and then identifying the bisimilar states. Here we rely on the theorem of \cite{AczelM} that the final coalgebra $\Zbb$ exists. Moreover, even if the carrier of $\Zbb$ is a proper class, it is still the case that subsets of $\Zbb$ correspond precisely to `modal predicates', that is, predicates that are invariant under bisimilarity, see \cite{KurzR05}.} $\Zbb$ first by defining $\sem{A}_\Zbb$ to be the set of elements of $\Zbb$ satisfying $A$, and then by recovering $\sem{A}_M\subseteq W$ as
\begin{equation}\label{eq:semZ}
\sem{A}_\Mbb=f^{-1}(\sem{A}_\Zbb),
\end{equation}
 where $f$ is the unique p-morphism $\Mbb\to\Zbb$. This construction works essentially because, in the category of models/Kripke structures/coalgebras, p-morphisms (i.e.\ functional bisimulations) preserve the satisfaction/validity of modal formulas. Bisimulation invariance is also enjoyed by formulas of such dynamic logics as EAK (cf.\ Section \ref{ssec:EAK}). Hence, for these dynamic logics, both Kripke semantics and the final coalgebra semantics are equivalently available. However, so far the community has not warmed up to adopting the final coalgebra semantics for dynamic epistemic logic,
  Baltag's \cite{Ba03}, and C\^{\i}rstea and Sadrzadeh's \cite{CS07} being among the few proposals exploring this setting. This is unlike the case of standard modal logic, in which the coalgebraic option has taken off, to the point that it has given rise to a field in its own right.
In \cite{GKPLori}, one aspect is brought to the fore in which the final coalgebra semantics for EAK is more advantageous than the standard semantics. In what follows, we report on this discussion. 

As mentioned in section \ref{ssec:EAK}, the interpretation of dynamic modalities is given in terms of the {\em actions} parametrizing them. Actions can be semantically represented as transformations of Kripke models, i.e., as relations between states of different Kripke models. From the viewpoint of the final coalgebra, any action symbol $\alpha$  can then be interpreted as a binary relation $\alpha_{\mathbb{Z}}$ on the final coalgebra $\mathbb{Z}$. In this way, the following well known fact becomes immediately applicable to the final coalgebra model:


\begin{proposition}
\label{prop:finalsemantics}
  Every relation $R\subseteq X\times Y$ gives rise to the  modal
  operators $$\fDiam{R},\fBox{R}: PY\to PX \ \textrm{and } \
  \pDiam{R},\pBox{R}:PX\to PY$$
defined as follows: for every $V\subseteq X$ and every $U\subseteq Y$,
%
\begin{center}
\begin{tabular}{c c c}
$\fDiam{R} U = \{x\in X\mid \exists y\;.\; x R y \ \& \ y\in U \}$ & $\quad$ &
$\fBox{R}U = \{x\in X\mid \forall y\;.\; x R y \ \Rightarrow \ y\in U \}$ \\
$\pDiam{R}V = \{y\in Y\mid \exists x\;.\; x R y \ \& \ x\in V \} $&  $\quad$ &
$\pBox{R}V = \{y\in Y\mid \forall x\;.\; x R y \ \Rightarrow \ x\in V \}$.\\
\end{tabular}
\end{center}
These operators come in adjoint pairs:
\begin{gather}\label{eq:fDiam-pBox}
\fDiam{R}U \subseteq V \ \mbox{ iff } \ U\subseteq
\pBox{R}V \\
\label{eq:pDiam-fBox}
\pDiam{R}V \subseteq U \ \mbox{ iff } \ V\subseteq
\fBox{R}U.
\end{gather}
\end{proposition}
Let
$\fDiam{\alpha_{\mathbb{Z}}},\fBox{\alpha_{\mathbb{Z}}},\pDiam{\alpha_{\mathbb{Z}}},\pBox{\alpha_{\mathbb{Z}}}$ be the
semantic modal operators given by Proposition \ref{prop:finalsemantics} in the special case where $X=Y$ is the
carrier $Z$ of $\mathbb{Z}$. These operations respectively provide a natural interpretation in the final coalgebra $\mathbb{Z}$ for the four connectives 
$\fDiam{\alpha},\fBox{\alpha}, \RESalphaDia, \RESalphaBox$,  parametric in the action symbol $\alpha$.

\medskip

Let us expand on how to interpret display-type structures and sequents in the final coalgebra. Structures will be translated into formulas, and formulas  will be interpreted as subsets of the final coalgebra. In order to translate structures as formulas, structural connectives need to be translated as logical connectives; to this effect, as mentioned in Section \ref{ssec:DisplayLogic}, structural connectives are associated with pairs of operational connectives, and any given occurrence of a structural connective is translated as one or the other of its associated logical connectives, according to which side of the sequent the given occurrence can be displayed on as main connective, 
as reported in Table \ref{pic:1}.
\begin{table}
\begin{center}
\begin{tabular}{c||c|c}
 Main                  & $\quad$ if in precedent$\quad$          & $\quad$ if in succedent$\quad$\\
$\quad$ connective $\quad$             \ &\  position           \ &\  position \\
\hline
I                  & $\top$                   & $\bot$            \\
$A\,; B$           & $A\wedge B$              & $A \vee B$        \\
$A > B$           \ & $A \pdra B$              & $A \rightarrow B$ \\
$\{\aga\} A$          & $\langle\aga\rangle A$             & $[\aga] A$          \\
$\RESagaProxy A$ &$\RESagaDia A$          & $\RESagaBox A$  \\
$\{\alpha\} A$     & $\langle\alpha\rangle A$ & $[\alpha] A$      \\
$\RESalphaProxy A$ &$\RESalphaDia A$          & $\RESalphaBox A$  \\
\end{tabular}
\end{center}
\caption{Translation of structural connectives into operational connectives}\label{pic:1}
\
\end{table}

Sequents $A\vdash B$ will be interpreted as inclusions $\val{A}_Z\subseteq \val{B}_Z$; rules $(A_i\vdash B_i\mid i\in I)/C\vdash D$ will be interpreted as implications of the form ``if $\val{A_i}_Z\subseteq \val{B_i}_Z$ for every $i\in I$, then $\val{C}_Z\subseteq \val{D}_Z$''.

As a direct consequence of the adjunctions
\eqref{eq:fDiam-pBox}, \eqref{eq:pDiam-fBox}, the following display postulates are sound under the interpretation above.   

 \begin{center}
 \begin{tabular}{c c}
 \AX$\{\alpha\} X \fCenter Y$
\RightLabel{\scriptsize${\{\alpha\} \atop \ssRESalphaProxy}$}
\doubleLine
\UI$X \fCenter \RESalphaProxy Y$
\DisplayProof

&
\AX$X \fCenter \{\alpha\} Y$
\RightLabel{\scriptsize${\ssRESalphaProxy \atop \{\alpha\}}$}
\doubleLine
\UI$\RESalphaProxy X \fCenter Y$
\DisplayProof

 \end{tabular}
 \end{center}
On the other hand, standard Kripke models are not in general closed under (the interpretations of) $\alpha$ and $\conv{\alpha}$.
 As a direct consequence of this fact,  we can show that e.g.\ the display postulate $\{\alpha\} \choose \ssRESalphaProxy$ is not sound in some Kripke models $M$ for any interpretation of formulas of the form 
 $\RESalphaBox B$ in $M$.

\begin{figure}
\begin{center}
\ \ \ \ \ \ \ \ \ \ \ \ \ \ \ \ \ \ \ \
\setlength{\unitlength}{.25mm}
\begin{picture}(140,40)(-32,-15)
\thicklines

\put(-100,0){\circle{40}} 
  \put(-100,7){\makebox(0,0){$\mathbf{u}$}}
  \put(-100,-7){\makebox(0,0){$p, r$}}

\qbezier(-150, 0)(-150,-50)(-110, -19) \qbezier(-150, 0)(-150, 50)(-110,
19) \put(-110,19){\vector(4,-3){0}}

\put(0,0){\circle{40}} 
  \put(0,7){\makebox(0,0){$\mathbf{u}$}}
  \put(0,-7){\makebox(0,0){$p, r$}}
\put(100,0){\circle{40}}
  \put(100,7){\makebox(0,0){$\mathbf{v}$}}
  \put(100,-7){\makebox(0,0){$q$}}



\qbezier(9,21)(48, 36)(90,21)
  \put(91,21){\vector(3,-1){0}}

\qbezier(9,-21)(48, -36)(90,-21)
  \put(8,-21){\vector(-3,1){0}}



\qbezier(-50, 0)(-50,-50)(-10, -19) \qbezier(-50, 0)(-50, 50)(-10,
19) \put(-10,19){\vector(4,-3){0}}

\qbezier(110,-19)(150,-50)(150, 0) \qbezier(110, 19)(150,50)(150, 0)
\put(110,19){\vector(-4,-3){0}}

\end{picture}
\end{center}
\caption{The models $M^\alpha$ and $M$.}\label{counterexample}
\end{figure}

\noindent    Indeed, consider the model $M$ represented on the right-hand side of the Figure \ref{counterexample}; 
let the action  $\alpha$ be the public announcement (cf.\ \cite{BMS}) of the atomic proposition $r$, and let $A := [\aga] p$ and $B := q$;
hence $M^\alpha$
is the submodel on the left-hand side of the picture. Let $i: M^\alpha\hookrightarrow M$ be the submodel injection map. Clearly, $\val{[\aga] p}_M =\varnothing$, 
which implies that the inclusion $\val{A}_M\subseteq \val{\rab B}_M$ trivially holds for any interpretation of $\rab B$ in $M$;
however, $i[\val{[\aga] p}_{M^\alpha}] = \{u\}$, hence $\val{\langle\alpha\rangle [\aga] p}_{M} = \val{\alpha}_M\cap i[\val{[\aga] p}_{M^\alpha}] = V(r)\cap \{u\} = \{u\}\not \subseteq \{v\} = \val{q}_M$, which falsifies the inclusion $\val{\langle\alpha\rangle A}_M\subseteq \val{B}_M$.
This proves our claim.

In Section \ref{ssec:properties of D'.EAK} we will discuss the soundness of the system D'.EAK w.r.t.\ the final coalgebra semantics.

}

\section{Final coalgebra semantics of dynamic logics}
\label{sec:final coalgebra sem}

In order to provide a  justification for the soundness of the display postulates involving the dynamic connectives, in \cite{GKPLori} the final coalgebra was used as a semantic environment for the calculus D.EAK. Specifically, the final coalgebra was there used to show that D.EAK is sound, and conservatively extends EAK. 
In the present section, we briefly review the needed preliminaries on the final coalgebra, and then  the interpretation of EAK-formulas in the final coalgebra, which we will use in section \ref{ssec:properties of D'.EAK} to show that D'.EAK is sound, and conservatively extends  EAK.\footnote{This semantics specifically applies to the classical base. Analogous ideas can be developed for weaker propositional bases, but in the present paper we do not pursue them further.}



\subsection{The final coalgebra}
The general notion of a coalgebra, as an arrow
$$W\to FW$$
is given w.r.t.\ a functor $F:\cal C\to\cal C$ on an arbitrary category $\cal C$, and much of the theory of coalgebras is devoted to establishing results on coalgebras parametric in that
functor $F$. For example, important notions such as bisimilarity and Hennessy-Milner logics can be given for arbitrary functors on the category of sets (and many other concrete categories).
But even if one is interested, as in our case here, only in one particular functor, the notion of a final coalgebra is of value, as we are going to see.

\medskip\noindent
Aczel \cite{aczel:nwfs} observed that coalgebras
$$W\to\cal P W$$
for the powerset functor $\cal P$ (which maps a set $W$ to the set $\cal PW$ of subsets of $W$)
are exactly Kripke frames. Indeed, a map $W\to\cal PW$ equivalently encodes a binary relation $R$ on $W$. More importantly, the category theoretic notion of a coalgebra morphism coincides with the notion of bounded (or p-) morphism in modal logic, and the coalgebraic notion of bisimulation coincides with the notion in modal logic. This observation generalises easily to Kripke models over a set $\textsf{AtProp}$ of atomic propositions and with multiple relations indexed by a set of agents $\mathsf{Ag}$, which are exactly coalgebras
$$W\to (\cal PW)^\mathsf{Ag}\times 2^\textsf{AtProp}.$$
As shown by \cite{AczelM}, one can construct a `universal model' $\Zbb$ by taking the disjoint union of all coalgebras $\Mbb$ and quotienting by bisimilarity. This coalgebra $\Zbb$ is \emph{final}, that is, for any coalgebra $\Mbb$ there is exactly one morphism $\Mbb\to\Zbb$. The property of finality characterises $\Zbb$ up to isomorphism.

\paragraph{$\Zbb$ may be a proper class. }  In \cite{AczelM},  any functor $F$ on sets is extended to classes and it is shown that the extended functor always has a final coalgebra, constructed as the bisimilarity collapse of the disjoint union of all coalgebras. In \cite{barr:terminal},  the same construction is recast in terms of an inaccessible cardinal, staying inside the set-theoretic universe without using classes. In \cite{AdamekMV:general}  these results  are generalized from sets to other similar categories such as posets, and in \cite{AdamekMV:classes}, it is shown that any functor $F$ on classes is the extension of a functor $F$ on sets.

\paragraph{$\Zbb$ classifies bisimilarity. } The importance of the theorems above is not merely the existence of the final coalgebra. Since all of these theorems involve two functors, one on `large' sets extending another one on `small' sets, and since one is interested in the notion of bisimilarity associated with the small functor, the existence of a final coalgebra for the large functor is not in itself the result one is interested in. But it is a fact, expressed for example as the small subcoalgebra lemma in \cite{AczelM}, that in all of the constructions above, the final coalgebra for the large functor classifies  the notion of bisimilarity associated with the small functor. In other words, passing from small to large does not extend---up to bisimilarity---the range of available models.

\paragraph{Frame conditions on  $\Zbb$. }  Often, one is interested in Kripke models satisfying additional frame conditions such as reflexivity, transitivity, equivalence, etc. A sufficient condition for the existence of a final coalgebra under such additional conditions is that these conditions can be formulated by modal axioms or rules, see \cite{Kurz02,Kurz01} for details.

\subsection{Final coalgebra semantics of modal logic}   Summing up the discussion in the previous  subsection, there is a one-to-one correspondence between subsets of the final coalgebra and unary predicates invariant under bisimilarity. Therefore, whenever we know that $A$ is a formula invariant under bisimilarity, we may declare the subset $\sem{A}_\Zbb=\{z\in\Zbb\mid \Zbb,z\Vdash A\}$ of the final coalgebra as the (final) semantics of $A$ and recover $\sem{A}_M\subseteq W$ as
\begin{equation}\label{eq:semZ}
\sem{A}_\Mbb=f^{-1}(\sem{A}_\Zbb),
\end{equation}
 where $f$ is the unique homomorphism $$f: \Mbb\to\Zbb$$ provided by the property of $\Zbb$ being final. Let us note that this approach is quite general: it only needs a notion of bisimilarity tied to the morphisms  of some category (see \cite{KurzR05} for a general definition) and a notion of modal formula whose semantics is invariant under this notion of bisimilarity.

\paragraph{Final coalgebra semantics of dynamic modalities. } Dynamic logics add to Kripke semantics a facility for updating the Kripke model interpreting a formula. Typically,  despite seemingly increasing the expressiveness of modal logic, such dynamic logics also enjoy bisimulation invariance and can therefore be interpreted in the final coalgebra.

\medskip\noindent Whereas the Kripke semantics of an action $\alpha$ is a relation between pointed models, the final coalgebra semantics of an action $\alpha$ is simply a relation on the carrier $Z$ of the final coalgebra $\Zbb$. The precise relationship between Kripke semantics and final coalgebra semantics of actions is as follows. Let us write  $$z\,\alpha_\Zbb\,z'$$ to express that the two points $z,z'$ of the final coalgebra are related by $\alpha$, formalising that in $z$ the action $\alpha$ can happen and has $z'$ as a successor. Then $z\,\alpha_\Zbb\,z'$ iff there are pointed models $(\Mbb,w)$ and $(\Mbb',w')$ related by the action $\alpha$ such that  the unique morphisms $\Mbb\to\Zbb$ and $\Mbb'\to\Zbb$ map $w$ to $z$ and $w'$ to $z'$.

\paragraph*{Specific desiderata for epistemic actions.} 
\label{par : Specific desiderata for epistemic actions}
The specific feature of epistemic actions versus arbitrary actions 
is that epistemic actions do not change the factual states of affairs. 
Semantically, this motivates the additional requirement that 
if $\alpha_Z \subseteq Z\times Z$ is the interpretation of an epistemic action $\alpha$ and   $z,z' \in Z$ are such that  $z \alpha_Z z'$,
then 
$$ \{ p\in AtProp \mid z \Vdash p \}  \quad = \quad  \{ p\in AtProp \mid z' \Vdash p \}.$$

\paragraph{Adjoints of dynamic modalities. }  To semantically justify the full display property of  display calculi for dynamic logics,  adjoints need to be available not only for the standard modalities, but also for the dynamic ones. Now, it is well known that modalities induced by a relation come in adjoint pairs. Let us recall

\begin{proposition}
\label{prop:finalsemantics}
  Every relation $R\subseteq X\times Y$ gives rise to the  modal
  operators $$\fDiam{R},\fBox{R}: PY\to PX \ \textrm{and } \
  \pDiam{R},\pBox{R}:PX\to PY$$
defined as follows: for every $V\subseteq X$ and every $U\subseteq Y$,
%
\begin{center}
\begin{tabular}{c c c}
$\fDiam{R} U = \{x\in X\mid \exists y\;.\; x R y \ \& \ y\in U \}$ & $\quad$ &
$\fBox{R}U = \{x\in X\mid \forall y\;.\; x R y \ \Rightarrow \ y\in U \}$ \\
$\pDiam{R}V = \{y\in Y\mid \exists x\;.\; x R y \ \& \ x\in V \} $&  $\quad$ &
$\pBox{R}V = \{y\in Y\mid \forall x\;.\; x R y \ \Rightarrow \ x\in V \}$.\\
\end{tabular}
\end{center}
These operators come in adjoint pairs:
\begin{gather}\label{eq:fDiam-pBox}
\fDiam{R}U \subseteq V \ \mbox{ iff } \ U\subseteq
\pBox{R}V \\
\label{eq:pDiam-fBox}
\pDiam{R}V \subseteq U \ \mbox{ iff } \ V\subseteq
\fBox{R}U.
\end{gather}
\end{proposition}
In order to apply this proposition to dynamic modalities, we need to consider the relation corresponding to an action $\alpha$. Kripke semantics suggests to consider $\alpha$ as a relation on all pointed Kripke models $(\Mbb,w)$, but this would introduce a two-tiered semantics: with the semantics of an ordinary modality given by a relation on the carrier of a model $\Mbb$ and the semantics of a dynamic modality given by a relation on the set of all pointed models $(\Mbb,w)$. In the final coalgebra semantics all relations are relations on the final coalgebra $\Zbb$ and we can directly apply the above proposition to both static and dynamic modalities (with the $X$ and $Y$ of the proposition being the carrier of the final coalgebra).

\paragraph{Soundness of the display postulates. }
Let us expand on how to interpret display-type structures and sequents in the final coalgebra. Structures will be translated into formulas, and formulas  will be interpreted as subsets of the final coalgebra. In order to translate structures as formulas, structural connectives need to be translated as logical connectives; to this effect,
structural connectives are associated with pairs of logical connectives and any given occurrence of a structural connective is translated as one or the other, according to which side of the sequent the given occurrence can be displayed on as main connective, 
as reported in
Table \ref{pic:1}.
\begin{table}
\begin{center}
\begin{tabular}{c||c|c}
Structural                & $\quad$ if in precedent$\quad$          & $\quad$ if in succedent$\quad$\\
$\quad$ connective $\quad$             \ &\  position           \ &\  position \\[1pt]
\hline
I                  & $\top$                   & $\bot$            \\[1pt]
$A\,; B$           & $A\wedge B$              & $A \vee B$        \\[2pt]
$A > B$           \ & $A \pdra B$              & $A \rightarrow B$ \\[2pt]
$\{\aga\} A$          & $\langle\aga\rangle A$             & $[\aga] A$          \\[4pt]
$\RESagaProxy A$ &$\RESagaDia A$          & $\RESagaBox A$  \\[4pt]
$\{\alpha\} A$     & $\langle\alpha\rangle A$ & $[\alpha] A$      \\[4pt]
$\RESalphaProxy A$ &$\RESalphaDia A$          & $\RESalphaBox A$  \\
\end{tabular}
\end{center}
\caption{Translation of structural connectives into logical connectives}\label{pic:1}
\end{table}
These logical connectives in turn are interpreted in the final coalgebra in the standard way. For example,
\begin{center}
\begin{tabular}{c c}
$\sem{\langle\alpha\rangle A}_\Zbb  =\langle \alpha_\Zbb \rangle \sem{A}_\Zbb $ &
$\sem{[\alpha] A}_\Zbb  = [ \alpha_\Zbb ] \sem{A}_\Zbb$\\
$\sem{\RESalphaDia  A}_\Zbb  =\pDiam{\alpha_\Zbb} \sem{A}_\Zbb$ &
$\sem{\RESalphaBox A}_\Zbb  = \pBox{\alpha_\Zbb} \sem{A}_\Zbb$\\
\end{tabular}
\end{center}
where the notation on the right-hand sides refers to the one defined in Proposition~\ref{prop:finalsemantics}.

\medskip\noindent Sequents $A\vdash B$ will be interpreted as inclusions $\val{A}_Z\subseteq \val{B}_Z$; rules $(A_i\vdash B_i\mid i\in I)/C\vdash D$ will be interpreted as implications of the form ``if $\val{A_i}_Z\subseteq \val{B_i}_Z$ for every $i\in I$, then $\val{C}_Z\subseteq \val{D}_Z$''.
As a direct consequence of the adjunctions
\eqref{eq:fDiam-pBox} and \eqref{eq:pDiam-fBox}, the following display postulates are sound under the interpretation above.

 \begin{center}
 \begin{tabular}{c c}
 \AX$\{\alpha\} X \fCenter Y$
\RightLabel{\scriptsize${\{\alpha\} \atop \ssRESalphaProxy}$}
\doubleLine
\UI$X \fCenter \RESalphaProxy Y$
\DisplayProof

\quad\quad & \quad\quad

\AX$X \fCenter \{\alpha\} Y$
\RightLabel{\scriptsize${\ssRESalphaProxy \atop \{\alpha\}}$}
\doubleLine
\UI$\RESalphaProxy X \fCenter Y$
\DisplayProof

 \end{tabular}
 \end{center}

\medskip\noindent\textbf{Remark.} On the other hand, standard Kripke models are not in general closed under (the interpretations of) $\alpha$ and $\conv{\alpha}$.
 As a direct consequence of this fact,  we can show that e.g.\ the display postulate $\{\alpha\} \choose \ssRESalphaProxy$ is not sound if we interpret it in a Kripke model $\Mbb$ for any interpretation of formulas of the form 
 $\RESalphaBox B$ in $\Mbb$.

\begin{figure}
\begin{center}
\ \ \ \ \ \ \ \ \ \ \ \ \ \ \ \ \ \ \ \
\setlength{\unitlength}{.25mm}
\begin{picture}(140,40)(-32,-15)
\thicklines

\put(-100,0){\circle{40}} 
  \put(-100,7){\makebox(0,0){$\mathbf{u}$}}
  \put(-100,-7){\makebox(0,0){$p, r$}}

\qbezier(-150, 0)(-150,-50)(-110, -19) \qbezier(-150, 0)(-150, 50)(-110,
19) \put(-110,19){\vector(4,-3){0}}

\put(0,0){\circle{40}} 
  \put(0,7){\makebox(0,0){$\mathbf{u}$}}
  \put(0,-7){\makebox(0,0){$p, r$}}
\put(100,0){\circle{40}}
  \put(100,7){\makebox(0,0){$\mathbf{v}$}}
  \put(100,-7){\makebox(0,0){$q$}}



\qbezier(9,21)(48, 36)(90,21)
  \put(91,21){\vector(3,-1){0}}

\qbezier(9,-21)(48, -36)(90,-21)
  \put(8,-21){\vector(-3,1){0}}



\qbezier(-50, 0)(-50,-50)(-10, -19) \qbezier(-50, 0)(-50, 50)(-10,
19) \put(-10,19){\vector(4,-3){0}}

\qbezier(110,-19)(150,-50)(150, 0) \qbezier(110, 19)(150,50)(150, 0)
\put(110,19){\vector(-4,-3){0}}

\end{picture}
\end{center}
\caption{The models $\Mbb^\alpha$ and $\Mbb$.}\label{counterexample}
\end{figure}

\noindent    Indeed, consider the model $\Mbb$ represented on the right-hand side of the Figure \ref{counterexample}
and let the action $\alpha$ be so that updating $(\Mbb,u)$ gives the model $\Mbb^\alpha$ depicted on the left-hand side of the figure. In other words, $\alpha$ is the public announcement (cf.\ \cite{BMS}) of the atomic proposition $r$. Further, let $A := [\aga] p$ and $B := q$, where $\aga$ is the agent whose equivalence relation is depicted by the arrows of the figure.
%
Let $i: \Mbb^\alpha\hookrightarrow \Mbb$ be the submodel injection map. Clearly, $\val{[\aga] p}_\Mbb =\varnothing$, 
which implies that the inclusion $\val{A}_\Mbb\subseteq \val{\rab B}_\Mbb$ trivially holds for any interpretation of $\rab B$ in $\Mbb$;
however, $i[\val{[\aga] p}_{\Mbb^\alpha}] = \{u\}$, hence $\val{\langle\alpha\rangle [\aga] p}_{\Mbb} =
V(r)\cap \{u\} = \{u\}\not \subseteq \{v\} = \val{q}_\Mbb$, which falsifies the inclusion $\val{\langle\alpha\rangle A}_\Mbb\subseteq \val{B}_\Mbb$.
This proves our claim.

\paragraph{Related work. }
Final coalgebra semantics for dynamic logics was employed by Gerbrandy and Groeneveld \cite{GerbrandyG97}, Gerbrandy \cite{GerbrandyPhD}, Baltag \cite{Ba03}, and C\^{\i}rstea and Sadrzadeh \cite{CS07}.  Adjoints of dynamic modalities with Kripke semantics were considered in Baltag, Coecke, Sadrzadeh \cite{Alexandru}. To guarantee the soundness of the rules involving the adjoints, they have to close the Kripke models under actions, which amounts, from our point of view, to generating a subcoalgebra of the final coalgebra closed under actions. The arguments reported here in favour of the final coalgebra semantics for treating dynamic modalities with their adjoints are taken from  \cite{GKPLori}.

\section{Proof-Theoretic Semantics for EAK}

In the present section, we introduce the calculus D'.EAK for the logic EAK, which is a revised and improved version of the calculus D.EAK discussed in  section \ref{D.EAK}. We argue that D'.EAK satisfies the requirements discussed in section \ref{ssec:Wansing}. On the basis of this, we propose D'.EAK as an adequate calculus from the viewpoint of proof-theoretic semantics.  We also verify that D'.EAK is a quasi proper display calculus (cf.\ definition \ref{def:quasiproper}), and hence its cut elimination theorem follows from theorem \ref{thm:meta}. 

\subsection{The  calculus D'.EAK}\label{D'.EAK} 
As is typical of display  calculi, D'.EAK manipulates sequents of type $X\vdash Y$, where $X$ and $Y$ are structures, i.e.\ syntactic objects inductively built from formulas using {\em structural connectives}, or {\em proxies}. Every proxy is typically associated with two logical (operational) connectives, and is interpreted {\em contextually} as one or the other of the two, depending on whether it occurs in precedent or in succedent position (cf.\ definition \ref{def: display prop}).  The design of D'.EAK follows Do\v{s}en's principle (cf.\ section \ref{ssec:Wansing}); consequently, D'.EAK is modular along many dimensions. For instance, the space of the versions of EAK on  nonclassical bases, down to e.g.\ the Lambek calculus, can be captured by suitably removing structural rules. Moreover, also w.r.t.~static modal logic, the space of properly displayable normal modal logics (cf.\ \cite{Kracht}) can be  reconstructed by adding or removing structural rules in a suitable way. Finally, different types of interaction between the dynamic and the epistemic modalities can be captured by changing the relative structural rules.

In order to highlight  this modularity, we will present the system piecewise. First we give rules for the propositional base, divided into structural rules and operational rules; then we do the same for the static modal operators; finally, we introduce the rules for the dynamic modalities. 

 In the table below, we give an overview of the logical connectives of the propositional base and their proxies.


\smallskip
\begin{center}
\begin{tabular}{|r|c|c|c|c|c|c|c|c|c}
\cline{1-9}
 \scriptsize{Structural symbols} & \mc{2}{c|}{$<$}   & \mc{2}{c|}{$>$} & \mc{2}{c|}{$;$} & \mc{2}{c|}{I}    \\
 \hline
 \scriptsize{Operational symbols} & $\pdla$ & $\leftarrow$ & $\pdra$ & $\rightarrow$  & $\wedge$ & $\vee$  & $\top$ & $\bot$    \\
\cline{1-9}
\end{tabular}
\end{center}
\smallskip

\noindent The table below contains  the structural rules for the propositional base:

\smallskip
\begin{center}
{\scriptsize{
\begin{tabular}{rl}

\mc{2}{c}{\normalsize{\textbf{Structural Rules}}} \\

& \\

\AXC{\phantom{$p \fCenter p$}}
\LeftLabel{$Id$}
\UI$p \fCenter p$
\DisplayProof
 &
\AX$X \fCenter A$
\AX$A \fCenter Y$
\RightLabel{$Cut$}
\BI$X \fCenter Y$
\DisplayProof
 \\

 & \\

\AX$X \fCenter Y$
\doubleLine
\LeftLabel{$\textrm{I}^1_{L}$}
\UI$\textrm{I} \fCenter Y < X$
\DisplayProof
 &
\AX$X \fCenter Y$
\doubleLine
\RightLabel{$\textrm{I}^1_{R}$}
\UI$X < Y \fCenter \textrm{I}$
\DisplayProof
 \\

 & \\

\AX$X \fCenter Y$
\LeftLabel{$\textrm{I}^2_{L}$}
\doubleLine
\UI$\textrm{I} \fCenter X > Y$
\DisplayProof &
\AX$X \fCenter Y$
\RightLabel{$\textrm{I}^2_{R}$}
\doubleLine
\UI$Y > X \fCenter \textrm{I}$
\DisplayProof \\

 & \\

\AX$\textrm{I}  \fCenter X$
\LeftLabel{$\textrm{I} W_{L}$}
\UI$Y \fCenter X $
\DisplayProof &
\AX$X \fCenter \textrm{I}$
\RightLabel{$\textrm{I} W_{R}$}

\UI$X \fCenter Y$
\DisplayProof \\

 & \\

\AX$X \fCenter Z$
\LeftLabel{$W^1_{L}$}
\UI$Y \fCenter Z < X$
\DisplayProof &
\AX$X \fCenter Z$
\RightLabel{$W^1_{R}$}
\UI$X < Z \fCenter Y$
\DisplayProof \\

 & \\

\AX$X \fCenter Z$
\LeftLabel{$W^2_{L}$}
\UI$Y \fCenter X > Z$
\DisplayProof &
\AX$X \fCenter Z$
\RightLabel{$W^2_{R}$}
\UI$Z > X \fCenter Y$
\DisplayProof \\

 & \\

\AX$X \,; X \fCenter Y$
\LeftLabel{$C_L$}
\UI$X \fCenter Y $
\DisplayProof &
\AX$Y \fCenter X \,; X$
\RightLabel{$C_R$}
\UI$Y \fCenter X$
\DisplayProof \\

 & \\

\AX$Y \,; X \fCenter Z$
\LeftLabel{$E_L$}
\UI$X \,; Y \fCenter Z $
\DisplayProof &
\AX$Z \fCenter X \,; Y$
\RightLabel{$E_R$}
\UI$Z \fCenter Y \,; X$
\DisplayProof \\

 & \\

\AX$X \,; (Y \,; Z) \fCenter W$
\LeftLabel{$A_{L}$}
\UI$(X \,; Y) \,; Z \fCenter W $
\DisplayProof &
\AX$W \fCenter (Z \,; Y) \,; X$
\RightLabel{$A_{R}$}
\UI$W \fCenter Z \,; (Y \,; X)$
\DisplayProof \\

 & \\
\end{tabular}
}}
\end{center}
\smallskip

The top-to-bottom  direction of each  $\textrm{I}$-rule is a special case of the corresponding \emph{weakening} rule. However, we state them all the same for the sake of modularity, since they might still be part of a calculus for a substructural logic  without weakening. The {\em weakening} rules are not given in the usual shape; the present version has the advantage that  the new structure is introduced in isolation; nevertheless, the standard version is derivable from the display postulates, as shown below:

\smallskip
\begin{center}
\begin{tabular}{c}
\AX$X \fCenter Z$
\UI$Y \fCenter Z < X$
\UI$Y\,; X \fCenter Z$
\DisplayProof
\end{tabular}
\end{center}
\smallskip

\noindent Having both versions of {\em weakening} as primitive rules is useful for reducing the size of   derivations.  In the following table, we include the display postulates linking the structural connective $;$ with  $>$ and $<$:

\begin{center}
{\scriptsize{
\begin{tabular}{rl}

\mc{2}{c}{\normalsize{\textbf{Display Postulates}}} \\

 & \\

\AX$X\, ; Y \fCenter Z$
\LeftLabel{$(;, <)$}
\doubleLine
\UI$X \fCenter Z < Y$
\DisplayProof &
\AX$Z \fCenter X ; Y$
\RightLabel{$(<, ;)$}
\doubleLine
\UI$Z < Y \fCenter X$
\DisplayProof \\

 & \\

\AX$X\, ; Y \fCenter Z$
\LeftLabel{$(;, >)$}
\doubleLine
\UI$Y \fCenter X > Z$
\DisplayProof &
\AX$Z \fCenter X\, ; Y$
\RightLabel{$(>, ;)$}
\doubleLine
\UI$X > Z \fCenter Y$
\DisplayProof \\

 & \\
\end{tabular}
}}
\end{center}

\noindent In the current presentation, more  connectives with their associated rules are accounted for than in \cite{GKPLori}. The additional rules can be proved to be derivable from the remaining ones in the presence of the rules \emph{exchange} $E_L$ and $E_R$. Likewise, as is well known, by dispensing with {\em contraction}, {\em weakening} and {\em associativity}, an even wider array of connectives would ensue (for instance, dispensing with {\em weakening} and {\em contraction} would separate the additive and the multiplicative versions of each connective, etc.). We are not going to expand on these well known ideas any further, but only point out that, in the context of the whole system that we are going to introduce below, this would give a modular account of different versions of EAK with different substructural logics as propositional base. The calculus introduced here is amenable to this line of investigation.  A natural question in this respect would be to relate these ensuing proof formalisms with the semantic settings of \cite{Alexandru}.

In line with this modular perspective on the propositional base for EAK, the classical base is obtained by adding the so-called \emph{Grishin} rules (following e.g.~\cite{Gore}), encoding validities which are classical but not intuitionistic:

\smallskip
\begin{center}
{\scriptsize{
\begin{tabular}{rl}

\mc{2}{c}{\normalsize{\textbf{Grishin rules}}} \\

 & \\

\AX$X\, > ( Y;Z) \fCenter W$
\LeftLabel{$Gri_L$}
\doubleLine
\UI$(X>Y); Z \fCenter W$
\DisplayProof &
\AX$W \fCenter X >(Y; Z)$
\RightLabel{$Gri_R$}
\doubleLine
\UI$W \fCenter (X> Y);Z$
\DisplayProof \\

 & \\
\end{tabular}
}}
\end{center}
\smallskip
This modular treatment can be regarded as an application of Do\v{s}en's principle: calculi for versions of EAK with stronger and stronger propositional bases are obtained by progressively adding  structural rules, but keeping the same operational rules. As a consequence, cut elimination for the different versions will follow immediately from the cut-elimination metatheorem without having to verify condition C$_8$ again.

The following table shows the operational rules for the propositional base:

\smallskip
\begin{center}
{\scriptsize{
\begin{tabular}{rl}
\mc{2}{c}{\normalsize{\textbf{Operational Rules}}} \\
 & \\

\AXC{\phantom{$\bot \fCenter \textrm{I}$}}
\LeftLabel{$\bot_L$}
\UI$\bot \fCenter \textrm{I}$
\DisplayProof &
\AX$X \fCenter \textrm{I}$
\RightLabel{$\bot_R$}
\UI$X \fCenter \bot$
\DisplayProof \\

 & \\

\AX$\textrm{I} \fCenter X$
\LeftLabel{$\top_L$}
\UI$\top \fCenter X$
\DisplayProof
 &
\AXC{\phantom{$\textrm{I} \fCenter \top$}}
\RightLabel{$\top_R$}
\UI$\textrm{I} \fCenter \top$
\DisplayProof\\

 & \\

\AX$A \,; B \fCenter Z$
\LeftLabel{$\pand_L$}
\UI$A \pand B \fCenter Z$
\DisplayProof &
\AX$X \fCenter A$
\AX$Y \fCenter B$
\RightLabel{$\pand_R$}
\BI$X \,; Y \fCenter A \pand B$
\DisplayProof
\ \, \\

 & \\
\AX$A \fCenter X$
\AX$B \fCenter Y$
\LeftLabel{$\vee_L$}
\BI$A \vee B \fCenter X \,; Y$
\DisplayProof &
\AX$Z \fCenter A \,; B$
\RightLabel{$\vee_R$}
\UI$Z \fCenter A \vee B $
\DisplayProof \\

 & \\
\ \,

\AX$B \fCenter Y  $
\AX$X \fCenter A  $
\LeftLabel{$\leftarrow_L$}
\BI$B \leftarrow A \fCenter Y<X$
\DisplayProof &
\AX$Z \fCenter B<A$
\RightLabel{$\leftarrow_R$}
\UI$Z \fCenter B \leftarrow A$

\DisplayProof \\

 & \\
\ \,
\AX$B <A \fCenter Z$
\LeftLabel{$\pdla_L$}
\UI$B \pdla A \fCenter Z$
\DisplayProof &
\AX$Y \fCenter B$
\AX$ A \fCenter X$
\RightLabel{$\pdla_R$}
\BI$ Y <  X  \fCenter B \pdla A $
\DisplayProof \\

 & \\
\ \,
\AX$X \fCenter A$
\AX$B \fCenter Y$
\LeftLabel{$\rightarrow_L$}
\BI$A \rightarrow B \fCenter X > Y$
\DisplayProof &
\AX$Z \fCenter A > B$
\RightLabel{$\rightarrow_R$}
\UI$Z \fCenter A \rightarrow B $
\DisplayProof \\

 & \\
\ \,
\AX$A> B\fCenter Z$
\LeftLabel{$\pdra_L$}
\UI$A \pdra B \fCenter Z$
\DisplayProof &
\AX$A \fCenter X$
\AX$ Y  \fCenter B$
\RightLabel{$\pdra_R$}
\BI$X>Y \fCenter A \pdra B $
\DisplayProof \\

 & \\

\end{tabular}
}}
\end{center}
\smallskip

\noindent As is well known, in the presence of {\em exchange}, the connectives $\leftarrow$ and $\pdla$ are identified with $\rightarrow$ and $\pdra$, respectively.
Notice that the rules $\bot_R$ and $\top_L$ are derivable in the presence of {\em weakening} and the $\textrm{I}$-rules. An example of such a derivation is given below:

\smallskip
\begin{center}
\AX$X \fCenter \textrm{I}$
\UI$\textrm{I} > X \fCenter \bot $
\UI$ X \fCenter \textrm{I} \ ; \bot $
\UI$ X < \bot \fCenter \textrm{I} $
\UI$ X \fCenter \bot   $
\DisplayProof
\end{center}
\smallskip


The rules for the normal epistemic modalities can be added to the system above or to any of its variants discussed early on. To this end, the language is now expanded with two contextual proxies and four operational connectives for every agent $\aga$, as follows:

\smallskip
\begin{center}

\begin{tabular}{|c|c|c|c|c|c|c|c|c|}
\cline{1-5}
{\scriptsize{Structural symbols}} &\mc{2}{c|}{$\{\aga\}$}       & \mc{2}{c|}{$\RESagaProxy$}    \\
\hline
{\scriptsize{Operational symbols}} &  $\lc\aga\rc$ & $\ls\mkern1.8mu{\aga}\mkern1.8mu\rs$   & $\RESagaDia$  & $\RESagaBox  $ \\
\cline{1-5}
\end{tabular}
\end{center}
\smallskip

\noindent The proxies $\{\aga\}$ and $\RESagaProxy$ are translated into  diamond-type modalities when occurring in precedent position and into  box-type modalities when occurring in succedent position. The structural rules, the display postulates, and the operational rules for the static modalities are respectively given in the following three tables:

\smallskip
\begin{center}
{\scriptsize{
\begin{tabular}{rl}

\mc{2}{c}{\normalsize{\textbf{Structural Rules}}} \\
& \\

\AX$\textrm{I} \fCenter X$
\LeftLabel{$nec_L^{ep}$}
\UI$ \{\aga \}\, \textrm{I} \fCenter  X $
\DisplayProof
&
\AX$ X \fCenter  \textrm{I} $
\RightLabel{$nec_R^{ep}$}
\UI$ X \fCenter \{\aga\}\, \textrm{I} $
\DisplayProof  \\

 & \\

\AX$ \textrm{I}  \fCenter X$
\LeftLabel{${^{ep}nec_L}$}
\UI$\RESagaProxy\, \textrm{I}  \fCenter  X$
\DisplayProof &
\AX$X \fCenter  \textrm{I}  $
\RightLabel{${^{ep}nec_R}$}
\UI$ X \fCenter \RESagaProxy\, \textrm{I} $
\DisplayProof \\

 & \\

\AX$\{\aga\} Y > \{\aga\} Z \fCenter X$
\LeftLabel{$FS^{ep}_L$}
\UI$\{\aga\} (Y > Z) \fCenter X$
\DisplayProof &
\AX$Y \fCenter \{\aga\} X > \{\aga\} Z$
\RightLabel{$FS^{ep}_R$}
\UI$Y \fCenter \{\aga\} (X > Z)$
\DisplayProof \\

 & \\

\AX$\{\aga\} X\,; \{\aga\} Y \fCenter Z$
\LeftLabel{$mon^{ep}_L$}
\UI$\{\aga\} (X\,; Y) \fCenter Z$
\DisplayProof
&
\AX$Z \fCenter \{\aga\} Y\,; \{\aga\} X$
\RightLabel{$mon^{ep}_R$}
\UI$Z \fCenter \{\aga\} (Y\,; X)$
\DisplayProof \\

& \\
\AX$\RESagaProxy Y > \RESagaProxy X \fCenter Z$
\LeftLabel{$^{ep}FS_L$}
\UI$\RESagaProxy (Y > X) \fCenter Z$
\DisplayProof &
\AX$Y \fCenter \RESagaProxy X > \RESagaProxy Z$
\RightLabel{$^{ep}FS_R$}
\UI$Y \fCenter \RESagaProxy (X > Z)$
\DisplayProof \\

 & \\
\AX$\RESagaProxy X\,; \RESagaProxy Y \fCenter Z$
\LeftLabel{$^{ep}mon_L$}
\UI$\RESagaProxy (X\,; Y) \fCenter Z$
\DisplayProof &
\AX$Z \fCenter \RESagaProxy Y\,; \RESagaProxy X$
\RightLabel{$^{ep}mon_R$}
\UI$Z \fCenter \RESagaProxy (Y\,; X)$
\DisplayProof \\

 & \\
\end{tabular}
}}
\end{center}
\smallskip

\noindent Notice that the  $mon$-rules (the soundness of which is due to  the monotonicity of $\lc\alpha\rc$ and $\ls{\mkern1.8mu{\alpha}}\mkern1.8mu\rs$) are derivable from the $FS$-rules in the presence of non restricted \emph{weakening} and \emph{contraction}.

\noindent The    $FS$-rules  above  encode the following Fischer Servi-type axioms:
\begin{eqnarray*}
 \lc \aga \rc A\rightarrow [\mkern1.8mu\aga\mkern1.8mu] B  \vdash [\mkern1.8mu\aga\mkern1.8mu] (A\rightarrow B) & &  \RESagaDia  A\rightarrow \RESagaBox  B    \vdash \RESagaBox  (A\rightarrow B)       \\
  \lc\mkern1.8mu\aga\mkern1.8mu \rc (A\pdra B)
  \vdash
   [\mkern1.8mu\aga\mkern1.8mu] A\pdra  \lc \aga \rc B
 & &  \RESagaDia (A\pdra B) \vdash \RESagaBox  A\pdra \RESagaDia  B .
 \end{eqnarray*}
%
These axioms encode the link between $\langle\aga\rangle$ and $[\aga]$ (and $\RESagaDia$ and $\RESagaBox$), namely, that they are interpreted semantically using the same relation in a Kripke frame.
This link can be alternatively expressed by  {\em conjugation axioms}, given below both in the diamond- and in the box-version:
\begin{eqnarray}
\lc\aga\rc A\wedge B\vdash \lc\aga\rc (A\wedge \RESagaDia B) &\quad \quad& \RESagaDia A\wedge B\vdash \RESagaDia (A\wedge \lc \aga \rc B), \label{eq:conjugation for diamond}\\
\ls\aga\rs(\RESagaBox A\vee B)\vdash (A\vee \ls\aga\rs B) &\quad \quad& \RESagaBox(\ls\aga\rs A\vee B)\vdash (A\vee \RESagaBox B),
\label{eq:conjugation for box}
\end{eqnarray}

\noindent which in  turn can be encoded in the following \emph{conjugation} rules:

{\scriptsize{
\[
\AX$\{\aga\} (X\,; \RESagaProxy Y) \fCenter Z$
\LeftLabel{\scriptsize{$conj$}}
\UI$ \{\aga\} X\,;Y \fCenter Z$
\DisplayProof
\quad
\quad
\AX$X \fCenter \{\aga\}  (Y\,; \RESagaProxy Z) $
\RightLabel{\scriptsize{$conj$}}
\UI$X \fCenter \{\aga\} Y\,;Z $
\DisplayProof
\]

\[
\AX$\RESagaProxy (X\,; \{\aga \} Y) \fCenter Z$
\LeftLabel{\scriptsize{$conj$}}
\UI$ \RESagaProxy X\,;Y \fCenter Z$
\DisplayProof
\quad
\quad
\AX$X \fCenter \RESagaProxy (Y\,; \{\aga \} Z) $
\RightLabel{\scriptsize{$conj$}}
\UI$X \fCenter  \RESagaProxy Y\,;Z $
\DisplayProof
\]
}}

\noindent
The $conj$-rules  and the $FS$-rules can be shown to be interderivable thanks to the following display postulates.

\smallskip
\begin{center}
{\scriptsize{
\begin{tabular}{rl}
\mc{2}{c}{\normalsize{\textbf{Display Postulates}}} \\


& \\

\AX$\{\aga\} X \fCenter Y$
\LeftLabel{$(\{\aga\},  \RESagaProxy)$}
\doubleLine
\UI$X \fCenter  \RESagaProxy Y$
\DisplayProof &
\AX$X \fCenter \{\aga\} Y$
\RightLabel{$(\RESagaProxy, \{\aga\})$}
\doubleLine
\UI$ \RESagaProxy X \fCenter Y$
\DisplayProof \\

 & \\
\end{tabular}
}}
\end{center}

\smallskip

\begin{center}
{\scriptsize{
\begin{tabular}{rl}
\mc{2}{c}{\normalsize{\textbf{Operational Rules}}}\\
 & \\

\AX$\{\aga\} A \fCenter X$
\LeftLabel{$\lc\aga\rc_L$}
\UI$\lc\aga\rc A \fCenter X$
\DisplayProof
 &
\AX$X \fCenter A$
\RightLabel{$\lc\aga\rc_R$}
\UI$\{\aga\} X \fCenter \lc\aga\rc A$
\DisplayProof
 \\

 & \\

\AX$A \fCenter X$
\LeftLabel{$\ls\mkern0.6mu\aga\mkern0.6mu\rs_L$}
\UI$\ls\mkern1mu\aga\mkern1mu\rs A \fCenter \{\aga\} X$
\DisplayProof
 &
\AX$X \fCenter \{\aga\} A$
\RightLabel{$\ls\mkern0.6mu\aga\mkern0.6mu\rs_R$}
\UI$X \fCenter \ls\mkern1mu\aga\mkern1mu\rs A$
\DisplayProof
 \\

 & \\

\AX$\RESagaProxy A \fCenter X$
\LeftLabel{$\RESagaDia_L$}
\UI$\RESagaDia A \fCenter X$
\DisplayProof
 &
\AX$X \fCenter A$
\RightLabel{$\RESagaDia_R$}
\UI$\RESagaProxy X \fCenter \RESagaDia A$
\DisplayProof
 \\

 & \\

\AX$A \fCenter X$
\LeftLabel{$\RESagaBox_L$}
\UI$ \RESagaBox A \fCenter \RESagaProxy X$
\DisplayProof
 &
\AX$X \fCenter \RESagaProxy A$
\RightLabel{$\RESagaBox_R$}
\UI$X \fCenter \RESagaBox A$
\DisplayProof
 \\

 & \\
\end{tabular}
}}
\end{center}
\smallskip



\noindent The rules presented so far are essentially adaptations of display calculi of Gor\'e's \cite{Gore}. Let us   turn to the dynamic part of the calculus D'.EAK: the language is now expanded by adding, for each action $\alpha$:

- two contextual proxies, together with their  four corresponding  operational unary connectives;

- one constant symbol and its corresponding structural proxy:

\begin{center}
\begin{tabular}{|c|c|c|c|c|c|c|c|c|l}
\cline{1-7}
\scriptsize{Structural symbols} &\mc{2}{c|}{$\{\alpha\}$}       & \mc{2}{c|}{$\RESalphaProxy$} & \mc{2}{c|}{$\Phi_\alpha$}   \\
\hline
\scriptsize{Operational symbols} &  $\lc\alpha\rc$ & $\ls\mkern2mu\alpha\mkern1mu\rs$  & $\RESalphaDia$ & $\RESalphaBox$  & $1_\alpha$ & $\phantom{1_\alpha}$\\
\cline{1-7}
\end{tabular}
\end{center}
\noindent As in the previous version D.EAK, the proxies $\{\alpha\}$ and $\RESalphaProxy$ are translated into  diamond-type modalities when occurring in precedent position, and into  box-type modalities when occurring in succedent position. An important difference between D.EAK and D'.EAK is the introduction of the structural and operational constants $\Phi_\alpha$ and $1_\alpha$; indeed, the additional expressivity they provide is used to capture the proof-theoretic behaviour of the metalinguistic abbreviation $Pre(\alpha)$ at the object-level.   As was the case of $Pre(\alpha)$ in  D.EAK, the rules below will be such that the proxy $\Phi_\alpha$ can occur only in precedent position. Hence, the $\Phi_\alpha$ can never be interpreted as anything else than $1_\alpha$. However, a natural way to extend D'.EAK would be to introduce an operational constant $0_\alpha$, intuitively standing for the postconditions of $\alpha$ for each action $\alpha$, and dualize the relevant rules so as to capture the behaviour of postconditions. In the present paper, this expansion is not pursued any further.

The two tables below introduce the  structural rules for the dynamic modalities which are analogous to those for the static modalities given early on. 

\begin{center}
{\scriptsize{
\begin{tabular}{rl}
\mc{2}{c}{\normalsize{\textbf{Structural Rules}}} \\
& \\

\AX$\textrm{I} \fCenter  X $
\LeftLabel{$nec_L^{dyn}$}
\UI$ \{\alpha \}\, \textrm{I} \fCenter  X $
\DisplayProof
&
\AX$ X \fCenter  \textrm{I} $
\RightLabel{$nec_R^{dyn}$}
\UI$ X \fCenter   \{\alpha\}\, \textrm{I} $
\DisplayProof  \\

 & \\

\AX$ \textrm{I}  \fCenter X$
\LeftLabel{$^{dyn}nec_L $}
\UI$\RESalphaProxy\, \textrm{I}  \fCenter  X$
\DisplayProof &
\AX$X \fCenter  \textrm{I}  $
\RightLabel{$^{dyn}nec_R$}
\UI$ X \fCenter \RESalphaProxy\, \textrm{I} $
\DisplayProof \\

 & \\

\AX$\{\alpha\} Y > \{\alpha\} Z \fCenter X$
\LeftLabel{$FS^{dyn}_L$}
\UI$\{\alpha\} (Y > Z) \fCenter X$
\DisplayProof &
\AX$Y \fCenter \{\alpha\} X > \{\alpha\} Z$
\RightLabel{$FS^{dyn}_R$}
\UI$Y \fCenter \{\alpha\} (X > Z)$
\DisplayProof \\

 & \\

\AX$\{\alpha\} X\,; \{\alpha\} Y \fCenter Z$
\LeftLabel{$mon^{dyn}_L$}
\UI$\{\alpha\} (X\,; Y) \fCenter Z$
\DisplayProof
&
\AX$Z \fCenter \{\alpha\} Y\,; \{\alpha\} X$
\RightLabel{$mon^{dyn}_R$}
\UI$Z \fCenter \{\alpha\} (Y\,; X)$
\DisplayProof \\

& \\
\AX$\RESalphaProxy Y > \RESalphaProxy X \fCenter Z$
\LeftLabel{$^{dyn}FS_L$}
\UI$\RESalphaProxy (Y > X) \fCenter Z$
\DisplayProof &
\AX$Y \fCenter \RESalphaProxy X > \RESalphaProxy Z$
\RightLabel{$^{dyn}FS_R$}
\UI$Y \fCenter \RESalphaProxy (X > Z)$
\DisplayProof \\

 & \\
\AX$\RESalphaProxy X\,; \RESalphaProxy Y \fCenter Z$
\LeftLabel{$^{dyn}mon_L$}
\UI$\RESalphaProxy (X\,; Y) \fCenter Z$
\DisplayProof &
\AX$Z \fCenter \RESalphaProxy Y\,; \RESalphaProxy X$
\RightLabel{$^{dyn}mon_R$}
\UI$Z \fCenter \RESalphaProxy (Y\,; X)$
\DisplayProof
\\
& \\
\end{tabular}
}}
\end{center}

\noindent Analogous  considerations as those made for the epistemic $FS$- and $mon$-rules apply to  the dynamic $FS$- and $mon$-rules above, also in relation to analogous {\em conjugation} rules. 

\smallskip
\begin{center}
{\scriptsize{
\begin{tabular}{rl}
\mc{2}{c}{\normalsize{\textbf{Display Postulates}}} \\

 & \\

\AX$\{\alpha\} X \fCenter Y$
\LeftLabel{$(\{\alpha\}, \RESalphaProxy)$}
\doubleLine
\UI$X \fCenter \RESalphaProxy Y$
\DisplayProof

 &

\AX$Y \fCenter \{\alpha\} X$
\RightLabel{$(\RESalphaProxy, \{\alpha\})$}
\doubleLine
\UI$\RESalphaProxy Y \fCenter X$
\DisplayProof \\

 & \\
\end{tabular}
}}
\end{center}
\smallskip

Next, we introduce the structural rules which are to capture the specific behaviour of epistemic actions \label{pageref:atom}
\smallskip
\begin{center}
{\scriptsize{
\begin{tabular}{rl}

\mc{2}{c}{\normalsize{\textbf{Atom}}} \\

& \\

\mc{2}{c}{
\AXC{}
\RightLabel{{$atom$}}
\LeftLabel{{$\phantom{atom}$}}
\UI$\Gamma p \fCenter \Delta p$
%
\DisplayProof
} \\



 & \\
\end{tabular}
}}
\end{center}
\smallskip

\noindent where $\Gamma$ and $\Delta$ are arbitrary finite sequences of the form $(\alpha_1)\ldots (\alpha_n)$, such that each  $(\alpha_j)$ is of the form $\{\alpha_j\}$ or of the form $\RESalphajProxy$, for $1\leq j\leq n$. Intuitively, the {\em atom rules} capture the requirement that epistemic actions do not change the factual state of affairs (in the Hilbert-style presentation of EAK, this is encoded in the axiom \eqref{eq:facts} in section \ref{ssec:EAK}).

\smallskip
\begin{center}
{\scriptsize{
\begin{tabular}{@{}rl@{}}
\mc{2}{c}{\normalsize{\textbf{Structural Rules for Epistemic Actions}}} \\


 & \\

\mc{2}{c}{\ \ \ \
\AX$X \fCenter Y$
\RightLabel{\emph{balance}}
\UI$\{\alpha\} X\fCenter \{\alpha\} Y$
\DisplayProof}\\

 & \\

\AX$\{\alpha\} \RESalphaProxy X  \fCenter  Y$
\LeftLabel{$comp^{\alpha}_L$}
\UI$ \Phi_\alpha; X \fCenter  Y$
\DisplayProof
&
\AX$X \fCenter  \{\alpha\} \RESalphaProxy Y$
\RightLabel{$comp^{\alpha}_R$}
\UI$ X \fCenter \Phi_\alpha >Y$
\DisplayProof
\\

 & \\

\AX$ \Phi_\alpha;  \{\alpha\} X \fCenter  Y$
\LeftLabel{\emph{reduce'}$_{L}$}
\UI$ \{\alpha\} X \fCenter  Y$
\DisplayProof
&
\AX$ Y \fCenter \Phi_\alpha > \{ \alpha \} X$
\RightLabel{\emph{reduce'}$_{R}$}
\UI$ Y \fCenter \{ \alpha \} X$
\DisplayProof \\

& \\
\AX$ \{\alpha\} \{\aga\} X \fCenter Y$
\LeftLabel{\emph{swap-in'}$_{L}$}
\UI$\Phi_\alpha; {\{\aga\}\{\beta\}_{\alpha\aga\beta}\, X} \fCenter Y$
\DisplayProof
&
\AX$Y \fCenter \{\alpha\} \{\aga\} X$
\RightLabel{\emph{swap-in'}$_{R}$}
\UI$Y \fCenter \Phi_\alpha > {\{\aga\} \{\beta\}_{\alpha\aga\beta}\,X}$
\DisplayProof\\

& \\

\AX$\Big ( \{\aga\}\{\beta\} \,X \fCenter Y\mid \alpha\aga\beta\Big)$
\LeftLabel{\emph{swap-out'}$_{L}$}
\UI$ \{\alpha\} \{\aga\} X \fCenter \Bigsemic\Big(Y\mid \alpha\aga\beta\Big)$
\DisplayProof
&
\AX$\Big(Y \fCenter \{\aga\}\{\beta\}\,X \mid \alpha\aga\beta\Big)$
\RightLabel{\emph{swap-out'}$_{R}$}
\UI$\Bigsemic\Big(Y\mid \alpha\aga\beta\Big) \fCenter  \{\alpha\} \{\aga\} X$
\DisplayProof\\

&  \\

\end{tabular}
}}
\end{center}
\smallskip

\noindent The {\em swap-in'} rules are unary and should be read as follows: if the premise holds, then the conclusion holds relative to any action $\beta$ such that $\alpha\aga\beta$. The {\em swap-out'} rules do not have a fixed arity; they have as many premises\footnote{The {\em swap-out} rule could indeed be infinitary if action structures were allowed to be infinite, which in the present setting, as in \cite{BMS}, is not the case.} as there are actions $\beta$ such that $\alpha\aga\beta$. In the conclusion, the symbol $\Bigsemic \Big(Y \mid \alpha\aga\beta \Big)$ refers to a string $(\cdots(Y\,; Y) \,;\cdots \,; Y)$ with $n$ occurrences of $Y$, where $n = |\{\beta\mid \alpha\aga\beta\}|$. The {\em swap-in} and {\em swap-out} rules  encode the interaction between dynamic and epistemic modalities as it is captured by the interaction axioms in the Hilbert style presentation of EAK (cf.\ \eqref{eq:interact axiom} in section \ref{ssec:EAK} and similarly in section \ref{ssec:intEAK}). The {\em reduce} rules encode well-known EAK validities such as $\langle\alpha\rangle A \rightarrow (Pre(\alpha)\wedge \langle\alpha\rangle A )$.


\noindent  Finally, the  operational rules for $\lc\alpha\rc$, $\ls\alpha\rs$, and $1_\alpha$ are given in the table below:

{\scriptsize{
\begin{center}
\begin{tabular}{rl}
\mc{2}{c}{\normalsize{\textbf{Operational Rules}}} \\
 & \\
\AX$\{\alpha\} A \fCenter X$
\LeftLabel{\fns$\lc\alpha\rc_L$}
\UI$\lc\alpha\rc A \fCenter X$
\DisplayProof &
\AX$X \fCenter A$
\RightLabel{\fns$\lc\alpha\rc_R$}
\UI$\{\alpha\} X \fCenter \lc\alpha\rc A$
\DisplayProof \\
 & \\
\AX$A \fCenter X$
\LeftLabel{\fns$\ls\alpha\rs_L$}
\UI$\ls\alpha\rs A \fCenter \{\alpha\} X$
\DisplayProof &
\AX$X \fCenter \{\alpha\} A$
\RightLabel{\fns$\ls\alpha\rs_R$}
\UI$X \fCenter \ls\alpha\rs A$
\DisplayProof \\
 & \\
\AX$\Phi_\alpha  \fCenter X$
\LeftLabel{\scriptsize{${1_\alpha}_L$}}
\UI$ 1_\alpha \fCenter X$
\DisplayProof
 &
\AxiomC{\phantom{$\Phi_\alpha \vdash$}}
\RightLabel{\fns{${1_\alpha}_R$}}
\UnaryInfC{$\Phi_\alpha \vdash 1_\alpha $}
\DisplayProof \\
 & \\

\end{tabular}
\end{center}
}
}

\subsection{Properties of D'.EAK}
\label{ssec:properties of D'.EAK}

\paragraph{Soundness.}
The calculus D'.EAK can be readily shown to be sound with respect to the final coalgebra semantics. The general procedure has been outlined in section \ref{sec:final coalgebra sem}. The soundness of most of the rules of D'.EAK can be shown  entirely analogously  to the soundness of the corresponding rules in D.EAK, which is outlined in \cite{GKPLori}.

%
As for rules not involving $\RESalphaProxy $,  we will rely on the following observation, which is based on the invariance of EAK-formulas under bisimulation (cf.\ Section \ref{ssec:EAK}):

\begin{lemma}
\label{lemma: final and standard sem}
The following are equivalent for all EAK-formulas $A$ and $B$:\\
(1) $\val{A}_Z\subseteq \val{B}_Z$;\\
(2) $\val{A}_M\subseteq \val{B}_M$ for every model $M$.
\end{lemma}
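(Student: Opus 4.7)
The plan is to reduce both implications to the fundamental relation~\eqref{eq:semZ}, namely that for every EAK-formula $A$ and every Kripke model $\Mbb$ with unique morphism $f_\Mbb \colon \Mbb \to \Zbb$, we have $\val{A}_\Mbb = f_\Mbb^{-1}(\val{A}_\Zbb)$. This identity is not itself proven in the excerpt, but it is the central content of the final coalgebra semantics discussed in Section~\ref{sec:final coalgebra sem}: it amounts to saying that the unique morphism $f_\Mbb$, being a (functional) bisimulation, preserves and reflects the truth of EAK-formulas, which in turn rests on the bisimulation-invariance of the language of EAK.

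For the implication $(1) \Rightarrow (2)$, I would argue as follows. Fix an arbitrary Kripke model $\Mbb$ and let $f_\Mbb \colon \Mbb \to \Zbb$ be the unique coalgebra morphism guaranteed by the finality of $\Zbb$. Then, applying $f_\Mbb^{-1}$ to both sides of the assumed inclusion $\val{A}_\Zbb \subseteq \val{B}_\Zbb$ and using \eqref{eq:semZ} twice, we obtain $\val{A}_\Mbb = f_\Mbb^{-1}(\val{A}_\Zbb) \subseteq f_\Mbb^{-1}(\val{B}_\Zbb) = \val{B}_\Mbb$, as required.

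For the converse $(2) \Rightarrow (1)$, the idea is that every point of $\Zbb$ is witnessed by a point of some ordinary Kripke model, since $\Zbb$ arises as the bisimilarity collapse of the disjoint union of all coalgebras (cf.\ the discussion of $\Zbb$ in Section~\ref{sec:final coalgebra sem}, in particular the small subcoalgebra lemma of \cite{AczelM}). Concretely, I would take the contrapositive: if $\val{A}_\Zbb \not\subseteq \val{B}_\Zbb$, pick $z \in \val{A}_\Zbb \setminus \val{B}_\Zbb$, and use the construction of $\Zbb$ to obtain a pointed model $(\Mbb, w)$ with $f_\Mbb(w) = z$. By \eqref{eq:semZ} again, $w \in \val{A}_\Mbb$ and $w \notin \val{B}_\Mbb$, refuting~(2).

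The main technical obstacle is not the order-theoretic manipulation above, which is essentially formal, but rather the justification of \eqref{eq:semZ} for the full language of EAK, i.e.\ the bisimulation-invariance of EAK-formulas. For the static fragment this is a standard consequence of the fact that coalgebra morphisms are bounded morphisms; for the dynamic fragment, however, one has to check that formulas of the form $\langle\alpha\rangle A$ are preserved and reflected by morphisms, which requires that updates commute appropriately with morphisms, i.e.\ that if $f \colon \Mbb \to \Zbb$ is a morphism then there is a naturally induced morphism $\Mbb^\alpha \to \Zbb$ mapping $(w,k)$ to the $\alpha_\Zbb$-successor of $f(w)$ corresponding to $k$. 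This is the conceptual heart of the argument, and is precisely the reason the final coalgebra semantics was set up as in Section~\ref{sec:final coalgebra sem}.
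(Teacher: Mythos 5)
Your proof is correct and follows essentially the same route as the paper's: the direction $(1)\Rightarrow(2)$ is verbatim the paper's argument via the unique morphism and \eqref{eq:semZ}, while the paper simply declares $(2)\Rightarrow(1)$ ``clear''. Your explicit justification of that latter direction---that every point of $\Zbb$ is the image of a point of some set-based model, so a counterexample in $\Zbb$ pulls back to a counterexample in an ordinary Kripke model---is the right way to fill in the step the paper leaves implicit, since $\Zbb$ may be a proper class and hence is not literally among the models quantified over in (2); your closing remark that the real burden lies in establishing \eqref{eq:semZ} for the dynamic fragment (i.e.\ bisimulation invariance of EAK-formulas) likewise matches the paper, which invokes exactly that invariance.
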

\begin{proof}
The direction from (2) to (1) is clear; conversely, fix a model $M$, and let $f: M\to Z$ be the unique arrow; then (1) immediately implies that $\val{A}_M  = f^{-1}(\val{A}_Z)\subseteq f^{-1}(\val{B}_Z) =  \val{B}_M$.
\end{proof}
In the light of the lemma above, and using the translations provided in Table \ref{pic:1}, the soundness of unary rules $A\vdash B/C\vdash D$ not involving $\RESalphaProxy$, such as {\em balance}, $\langle\alpha\rangle_R $ and $[\alpha]_L$, can be straightforwardly checked as implications of the form ``if $\val{A}_M\subseteq \val{B}_M$ on every model $M$, then $\val{C}_M\subseteq \val{D}_M$ on every model $M$''. As an example, let us check the soundness of {\em balance}: Let $A, B$ be EAK-formulas such that $\val{A}_M\subseteq \val{B}_M$ on every model $M$. Let us fix a model $M$, and show that $\val{\langle\alpha\rangle A}_M\subseteq \val{[\alpha] B}_M$. As discussed in \cite[Subsection 4.2]{KP13}, the following identities hold in any standard model:
\begin{gather}\label{eq:extension alpha phi}\val{ \langle \alpha \rangle A}_M = \val{Pre(\alpha)}_M \cap \iota_k^{-1} [i[\val{A}_{M^\alpha}]],\\
\label{eq:extension box alpha phi}\val{ [\alpha] A}_M = \val{Pre(\alpha)}_M \Rightarrow \iota_k^{-1} [i[\val{A}_{M^\alpha}]],\end{gather}
where the map $i: M^\alpha\to \coprod_\alpha M$ is the submodel embedding, and $\iota_k: M\to \coprod_\alpha M$ is the embedding of $M$ into its $k$-colored copy. Letting $g(-): = \iota_k^{-1} [i[-]]$, we need to show that

$$\val{Pre(\alpha)}_M\cap g(\val{A}_{M^\alpha})\subseteq \val{Pre(\alpha)}_M\Rightarrow g(\val{B}_{M^\alpha}).$$
This is a direct consequence of the Heyting-valid implication ``if $b\leq c$ then $a\wedge b\leq a\to c$'', the monotonicity of $g$, and the assumption that $\val{A}_M\subseteq \val{B}_M$ holds on every model, hence on $M^\alpha$. 

Actually, for all rules $(A_i\vdash B_i\mid i\in I)/C\vdash D$  not involving $\RESalphaProxy $ except {\em balance}, $\langle\alpha\rangle_R $ and $[\alpha]_L$,   stronger soundness statements can be proven of the form ``for every model $M$, if $\val{A_i}_M\subseteq \val{B_i}_M$ for every $i\in I$, then $\val{C}_M\subseteq \val{D}_M$'' (this amounts to the soundness w.r.t. the standard semantics). 
This is the case for all display postulates not involving $\RESalphaProxy$, the soundness of which boils down to the well known adjunction conditions holding in every model $M$. As to the remaining rules not involving $\RESalphaProxy$, thanks to the following general principle of {\em indirect (in)equality}, the stronger soundness condition above boils down to the verification of inclusions which interpret validities of IEAK \cite{KP13}, and hence, a fortiori, of EAK. Same arguments hold for the Grishin rules, except that their soundness boils down to classical but not intuitionistic validities.
\begin{lemma}
{\bf (Principle of indirect inequality)} Tfae for any preorder $P$ and all $a, b\in P$:\\
(1) $a\leq b$;\\
(2) $x\leq a$ implies $x\leq b$ for every $x\in P$;\\
(3) $b\leq y$ implies $a\leq y$ for every $y\in P$.
\end{lemma}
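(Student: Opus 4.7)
The plan is to prove this classical order-theoretic principle by establishing a cycle of implications, say $(1) \Rightarrow (2) \Rightarrow (1)$ and $(1) \Rightarrow (3) \Rightarrow (1)$, relying only on the fact that $\leq$ is reflexive and transitive as a preorder. No additional machinery should be required.

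For $(1) \Rightarrow (2)$, I would assume $a \leq b$ and, given any $x \in P$ with $x \leq a$, conclude $x \leq b$ by a single application of transitivity. For the converse $(2) \Rightarrow (1)$, the trick is the usual one of instantiating the universally quantified $x$ with $a$ itself: reflexivity gives $a \leq a$, and then the hypothesis $(2)$ yields $a \leq b$. The implications $(1) \Rightarrow (3)$ and $(3) \Rightarrow (1)$ are completely symmetric, obtained by swapping the roles of precedent and succedent: $(1) \Rightarrow (3)$ is again immediate by transitivity, while $(3) \Rightarrow (1)$ follows by instantiating $y$ with $b$ and appealing to reflexivity $b \leq b$.

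There is no real obstacle here; the lemma is a triviality once the right instantiations are chosen. The only thing worth emphasising in the write-up is that reflexivity is precisely what makes the two converse directions go through, so the statement would already fail on a mere transitive relation. Since the lemma is used in the paper to reduce soundness checks to the verification of concrete (in)equalities in models, the proof can be kept to a few lines and I would present it as a compact cycle of four one-line implications rather than spelling out all cases.
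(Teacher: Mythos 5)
Your proof is correct and is exactly the standard argument the paper has in mind (the paper in fact states this lemma without proof, treating it as immediate): transitivity gives the forward directions, and instantiating $x:=a$ (resp.\ $y:=b$) together with reflexivity gives the converses. Nothing is missing, and your remark that reflexivity is the essential ingredient for the converse directions is accurate.
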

As an example, let us verify \emph{s-out}$_{L}$: fix a model $M$, fix EAK-formulas $A$ and $B$, and assume that for every action $\beta$, if $\alpha\aga\beta$ then $\val{\langle\aga\rangle\lc\beta\rc A}_M\subseteq \val{B}_{M}$, i.e., that $\bigcup\{ \val{\langle\aga\rangle\lc\beta\rc A}_M\mid \alpha\aga\beta\}\subseteq \val{B}_{M}$; we need to show that $\val{\lc\alpha\rc\langle\aga\rangle A}_M\subseteq \val{B}_{M}$. By the principle of indirect inequality, it is enough to show that $ \val{\lc\alpha\rc\langle\aga\rangle A}_M \subseteq  \bigcup\{ \val{\langle\aga\rangle\lc\beta\rc A}_M\mid \alpha\aga\beta\}$. Indeed, since axiom \eqref{eq:interact axiom} is valid on any model, we have:
$$ \val{\lc\alpha\rc\langle\aga\rangle A}_M \subseteq  \val{Pre(\alpha)}_M\cap \bigcup\{ \val{\langle\aga\rangle\lc\beta\rc A}_M\mid \alpha\aga\beta\} \subseteq  \bigcup\{ \val{\langle\aga\rangle\lc\beta\rc A}_M\mid \alpha\aga\beta\}.$$

The soundness of the operational rules of $1_\alpha$ is immediate; the soundness of \emph{atom}
can be proven directly on the final coalgebra by induction on the length of $\Gamma$ and $\Delta$ using the fact, mentioned on page \pageref{par : Specific desiderata for epistemic actions},
that
epistemic actions do not change the valuations of atomic formulae.
For instance, as to the base case of this induction,
let us argue for the soundness of $\RESalphaProxy p \vdash p$ and $ p \vdash \RESalphaProxy p$:
indeed, let $\alpha_Z \subseteq Z\times Z$ be the interpretation of the epistemic action $\alpha$ on the final coalgebra, then the left-hand side of the atom-sequent above is interpreted as the set $\alpha_Z [\val{p}_Z]$.
Because of the assumption on $\alpha_Z$ mentioned above
it immediately follows that $\alpha_Z [\val{p}_Z] \subseteq \val{p}_Z$, and $\alpha_Z [\val{p}^c_Z] \subseteq \val{p}^c_Z$. The former inclusion gives the soundness of
$\RESalphaProxy p \vdash p$, while the latter is equivalent to $ \val{p}_Z \subseteq (\alpha_Z [\val{p}^c_Z])^c$,
which gives the soundness of $ p \vdash \RESalphaProxy p$.

The soundness of the $comp$ rules is given in the appendix (cf.\ subsection \ref{Soundness of comp}). 

Finally, the soundness of the rules which do involve $\RESalphaProxy$ remains to be shown. The soundness of the display postulates immediately follows from Proposition \ref{prop:finalsemantics}. As an example, let us verify the soundness of $^{dyn}FS_L$: translating the structures into formulas,  it boils down to verifying that, for all EAK-formulas $A, B$ and $C$,
if $\val{\RESalphaBox A}_Z \pdra \val{\RESalphaDia B}_Z \subseteq \val{C}_{Z}$, then $\val{\RESalphaDia(A \pdra B)}_{Z}\subseteq \val{C}_{Z}$. By applying the appropriate adjunction rules, the implication above is equivalent to the following implication: if $\val{B}_Z \subseteq \val{[\alpha](\RESalphaBox A\vee C)}_{Z}$ then $\val{B}_Z \subseteq \val{A\vee [\alpha] C}_{Z}$. By applying the principle of indirect inequality,  we are reduced to showing the inclusion
$$\val{[\alpha](\RESalphaBox A\vee C)}_Z\subseteq \val{A\vee [\alpha] C}_{Z},$$
which is the soundness of the box-version of a conjugation condition (see the shape of \eqref{eq:conjugation for box} for epistemic modalities), and is true in $Z$ since  $\RESalphaBox$ is interpreted as  $\pBox{\alpha}$.

\paragraph{Completeness and  conservativity.} The completeness of D'.EAK w.r.t.\ the Hilbert presentation of EAK (cf.\ subsections \ref{ssec:EAK} and \ref{ssec:intEAK}) is achieved by showing that the axioms of (the intuitionistic version of) EAK are derivable in D'.EAK. These derivations are collected in subsection \ref{ssec: completeness}.

Again, as was the case for D.EAK, the fact that D'.EAK is a conservative extension of EAK can be argued as follows:   let $A, B$ be EAK-formulas
such that $A\vdash_{D'.EAK} B$, and let $\mathbb{Z}$ be the final coalgebra. By the soundness of D'.EAK w.r.t.\ the $\mathbb{Z}$, this implies that $\val{A}_Z\subseteq\val{B}_Z$, which, by the bisimulation invariance of EAK (cf.\ \cite[Lemma 1]{GKPLori}), 
implies that $\val{A}_M\subseteq\val{B}_M$  for every Kripke model $M$, which, by the completeness of EAK w.r.t.\ the standard Kripke semantics, implies that $A\vdash_{EAK} B$.


\paragraph{Adequacy of D'.EAK w.r.t.\ Wansing's criteria.}
It is easy to see that the calculus D'.EAK  enjoys the display property (cf.\ Definition \ref{def: display prop}).
Like its previous version, D'.EAK is defined independently of the relational semantics of EAK, and therefore is suitable for a fine-grained  proof-theoretic semantic analysis. It can be readily verified by inspection that all  operational rules satisfy Wansing's criteria of \emph{separation}, \emph{symmetry} and \emph{explicitness} (cf.\ subsection \ref{ssec:Wansing}). 


Moreover, a clear-cut division of labour has been achieved between the operational rules, which are to encode the proof-theoretic meaning of the new connectives, and 
the structural rules, which are to express the relations entertained between the different connectives by way of their proxies.

Another important proof-theoretic feature of D'.EAK is modularity. 
As discussed in subsection \ref{D'.EAK},  
by suitably removing structural rules  for the propositional base of D'.EAK, the substructural versions of  EAK can be modularly defined. Moreover, by adding structural rules corresponding to properly displayable modal logics (cf.\ \cite{Kracht}), different assumptions can be captured on the behaviour of the epistemic modalities.\footnote{Note that \emph{Balance}, \emph{comp}, \emph{reduce}, \emph{swap-in} and \emph{swap-out} are the only specific structural rules for epistemic actions; the \emph{monotonicity}  and \emph{Fischer-Servi} rules respectively encode the conditions that box and diamond are monotone and interpreted by means of the same relation; the \emph{necessitation} can be considered as a special case of atom and $IW$ can be eliminated if, e.g., $\bot \vdash \lc\alpha\rc \bot$ and $[\alpha] \top \vdash \top$ are introduced as zeroary rules.}

Notwithstanding the fact that the old \emph{reverse} rules, offending {\em segregation}, are derived rules in D'.EAK, still the system D'.EAK does not satisfy \emph{segregation}. However, the only rule in D'.EAK offending {\em segregation} is \emph{atom} because one of the two principal formulas in each {\em atom} axioms might not occur in display. Even if the most rigid proof-theoretic semantic principle is not met, D'.EAK is a quasi-proper display calculus, and hence it
enjoys Belnap-style cut elimination, as will be shown in the next subsection.

\subsection{Belnap-style cut-elimination for D'.EAK}\label{sec:C_1-C7for D'.EAK}

In the present subsection, we prove that  D'.EAK is a quasi proper display calculus (cf.\ Subsection \ref{ssec:quasi-def}), that is, the rules of D'.EAK  satisfy  conditions C$_1$, C$_2$, C$_3$, C$_4$, C$_5'$, C$_5''$, C$_6$, C$_7$, C$_8$. By Theorem \ref{thm:meta}, this is enough to establish that the calculus enjoys the cut elimination and the subformula property.

The rules \emph{reverse} are now derivable, and all the rules with the side condition $Pre(\alpha)$ have been reformulated so as to either remove $Pre(\alpha)$ altogether, or to replace it with its structural counterpart.  This has been achieved by expanding the language so that the meta-linguistic abbreviation $Pre (\alpha)$ can be replaced by an operational constant and its corresponding structural connective. Hence, it can be readily verified that all rules are closed under simultaneous substitution of arbitrary structures for congruent parameters, which   
satisfies  conditions C$_6$ and C$_7$.
It is easy to see that the operational rules for $1_\alpha$ and the $comp$ rules satisfy the criteria C$_1$--C$_7$.
The  \emph{atom} axioms can be readily seen to  verify condition $C''_5$ as given in subsection \ref{ssec:quasi-def}.
Finally, as to condition $C_8$, let us show the cases involving the new connective $1_\alpha$. All the other cases 
are reported in appendix \ref{cut in D'.EAK}.

{\scriptsize{
\begin{center}
\begin{tabular}{@{}rcl@{}}
\bottomAlignProof
\AXC{ $\Phi_\alpha$ \fCenter {$1_\alpha$}}
\AXC{\ \ \ $\vdots$ \raisebox{1mm}{$\pi$}}
\noLine
\UI$\Phi_\alpha \fCenter\ X$
\UI$ 1_\alpha \fCenter\ X$
\BI$\Phi_\alpha \fCenter X$
\DisplayProof
 & $\rightsquigarrow$ &
\bottomAlignProof
\AXC{\ \ \ $\vdots$ \raisebox{1mm}{$\pi$}}
\noLine
\UI$\Phi_\alpha \fCenter X$
\DisplayProof
 \\
\end{tabular}
\end{center}
}}

\section{Conclusions and further directions}

\subsection{Conclusions} In the present paper, we provide an analysis, conducted adopting the viewpoint of proof-theoretic semantics, of the state-of-the-art deductive systems for dynamic epistemic logic, focusing mainly on Baltag-Moss-Solecki's logic of epistemic actions and knowledge (EAK). We start with an overview of the general research agenda in proof-theoretic semantics, and then we focus on display calculi, as a proof-theoretic paradigm which has been successful in accounting for difficult logics, such as modal logics and substructural logics. We discuss the requirements which a proof system should satisfy to provide adequate proof-theoretic semantics to logical constants, and, as an original contribution, we introduce the notion of quasi proper display calculus, and prove its corresponding Belnap-style cut elimination metatheorem.  We then  evaluate the main existing proof systems for PAL/EAK according to the previously discussed requirements. As the second original contribution, we propose a revised version of one such system, namely of the system  D.EAK (cf.~section \ref{D.EAK}), and we argue that our revised system D'.EAK adequately meets the proof-theoretic semantic requirements  for all the logical constants involved. We also show that D'.EAK is sound w.r.t.\ the final coalgebra semantics, complete w.r.t.\ EAK, of which it is a conservative extension. These three facts together guarantee that D'.EAK exactly captures EAK. Finally, we  verify that D'.EAK is a quasi proper display calculus. Hence, the generalized metatheorem applies, and D'.EAK is thus shown to enjoy Belnap-style cut elimination (which was not argued for in the case of the original system D.EAK) and the subformula property. The main ingredient of this revision is an expansion of the language of the original system, aimed at achieving an {\em independent} proof-theoretic account of the preconditions $Pre(\alpha)$. This account is independent  both in the sense that it is given purely in terms of the resources of D'.EAK, and in the sense that the metalinguistic abbreviation $Pre(\alpha)$ is treated as a first-class citizen of the revised system. Indeed, $Pre(\alpha)$ is endowed with both an operational and a structural representation, both of which well-behaving.

\subsection{Further directions}
\label{ssec:further directions}
\paragraph{Uniform proof-theoretic account for dynamic logics. } The present paper is part of a larger research program aimed at giving a uniform proof-theoretic account to a wide class of logics which includes dynamic logics. In \cite{SabGre} and \cite{PDL}, this treatment has been extended to monotone modal logics and, respectively, to the full language of Propositional Dynamic Logic.  
Another interesting case study is Parikh's Game Logic \cite{Parikh}, where the dynamic modalities are non normal and the set of agents is endowed with algebraic structure, which is treated in a paper \cite{GL} in preparation.

\paragraph{Multi-type display-style calculi. } The metatheorem proven in the present paper applies to a class of display calculi (the quasi-proper display calculi) which generalize Wansing's notion of proper display calculi by relaxing the property of isolation.  However, in both quasi proper and proper display calculi, rules are required to be closed under simultaneous substitution of  {\em arbitrary} structures for congruent formulas. 
This requirement occurs in a weaker form in both the original  \cite[Theorem 4.4]{Belnap} and in some of its subsequent versions \cite{Be2, Restall, Wansing}. Indeed, these metatheorems  
apply to display calculi
admitting  rules  
for which the closure under substitution may be {\em not  arbitrary}, but restricted to structures satisfying  certain conditions. This weaker requirement  primarily concerns rules; however, it is encoded  in the notion of {\em regular formula} and  asks every formula to be regular. The condition given in terms of regular formulas is key to accounting for important logics such as linear logic. On the other hand, it ingeniously relies on very special features of the signature of linear logic, and hence it is of difficult application outside that setting. We conjecture that logics such as linear logic can be alternatively accounted for by display-type calculi  all the rules of which are closed under simultaneous substitution of arbitrary structures for parametric operational terms (formulas).  
We conjecture that this is possible thanks to the introduction of  a suitable {\em multi-type} environment, in which every derivable sequent/consecution is required to be {\em type-uniform} (i.e., both the antecedent and the  consequent of any sequent/consecution must belong to the same type). The requirement formulated in terms of regular formulas would then be encoded in the multi-type setting in terms of the condition that, in each given rule,  parametric constituents (of a given and  unambiguously determined type) can be uniformly replaced by structures which are {\em arbitrary within that same type}, so as to obtain instances of the same rule. An example of such a multi-type environment is introduced in \cite{Multitype}. The adaptation of the multi-type setting to the case of linear logic is work in progress.

\appendix
\section{Special rules}\label{Special rules}

\subsection{Derived rules in D'.EAK}

In the presence of the display postulates, the $conj$-rules are interderivable with the Fischer Servi rules. Indeed, let us show that the following rules

{\scriptsize{
\[
\AX$\{\alpha\} (X~; \RESalphaProxy Y) \fCenter Z$
\LeftLabel{\scriptsize{$conj$}}
\UI$ \{\alpha\} X~;Y \fCenter Z$
\DisplayProof
\quad
\quad
\AX$Y \fCenter \RESalphaProxy X > \RESalphaProxy Z$
\RightLabel{\scriptsize{$FS$}}
\UI$Y \fCenter \RESalphaProxy (X> Z)$
\DisplayProof
\]
}
}
\noindent are interderivable:\footnote{Note that we are using exchange, but this rule is not required if we add the corresponding \emph{Fisher-Servi} rule for the right-residuum of `$;$' and the obvious \emph{conjugation} rule with `$X\,; \{\alpha\} Y$' in a reversed order.}
{\scriptsize{
\[
\AX$\{\alpha\} (X\,; \RESalphaProxy Y) \fCenter Z$
\UI$X\,;\RESalphaProxy Y \fCenter \RESalphaProxy Z$
\UI$\RESalphaProxy Y\,; X \fCenter \RESalphaProxy Z$
\UI$X \fCenter \RESalphaProxy Y > \RESalphaProxy Z$
\RightLabel{\scriptsize{$FS$}}
\UI$X \fCenter \RESalphaProxy (Y>Z)$
\UI$\{\alpha\} X \fCenter Y>Z$
\UI$Y\,; \{\alpha\} X \fCenter Z$
\UI$ \{\alpha\} X\,;Y \fCenter Z$
\DisplayProof
\quad
\quad
\AX$Y \fCenter \RESalphaProxy X > \RESalphaProxy Z$
\UI$\RESalphaProxy X\,;Y \fCenter \RESalphaProxy Z$
\UI$Y\,; \RESalphaProxy X \fCenter \RESalphaProxy Z$
\UI$\{\alpha\}(Y\,; \RESalphaProxy X) \fCenter Z$
\LeftLabel{$conj$}
\UI$ \{\alpha\}Y \,;  X \fCenter Z$
\UI$X\,; \{\alpha\}Y \fCenter Z$
\UI$\{\alpha\} Y \fCenter X > Z$
\UI$Y \fCenter \RESalphaProxy (X > Z)$
\DisplayProof
\]
}
}

\noindent Analogous derivations show that  the pairs of rules in each row of the table below are interderivable:

\begin{center}
{\scriptsize{
\begin{tabular}{lcr}
\AX$\RESalphaProxy (X\,; \{\alpha \} Y) \fCenter Z$
\LeftLabel{\scriptsize{$conj$}}
\UI$ \RESalphaProxy X\,;Y \fCenter Z$
\DisplayProof
& & 
\AX$Y \fCenter \{\alpha\} X > \{\alpha\} Z$
\RightLabel{\scriptsize{$FS$}}
\UI$Y \fCenter \{\alpha\} (X > Z)$
\DisplayProof
 \\
 
&&\\

\AX$X \fCenter \{\alpha\}  (Y\,; \RESalphaProxy Z) $
\RightLabel{\scriptsize{$conj$}}
\UI$X \fCenter \{\alpha\} Y\,;Z $
\DisplayProof
&&
\AX$\RESalphaProxy Y > \RESalphaProxy X \fCenter Z$
\LeftLabel{$FS$}
\UI$\RESalphaProxy (Y > X) \fCenter Z$
\DisplayProof 
\\

&&\\

\AX$X \fCenter \RESalphaProxy (Y\,; \{\alpha \} Z) $
\RightLabel{\scriptsize{$conj$}}
\UI$X \fCenter  \RESalphaProxy Y\,;Z $
\DisplayProof

&&

\AX$\{\alpha\} Y > \{\alpha\} Z \fCenter X$
\LeftLabel{$FS$}
\UI$\{\alpha\} (Y > Z) \fCenter X$
\DisplayProof 
\\
\end{tabular}
}}
\end{center}

\noindent Let us show that the  rules ``with side conditions'' in D.EAK (cf.\ subsection \ref{D.EAK}) can be derived from their corresponding  rules in D'.EAK and the remaining part of the calculus.

\noindent An important benefit of the revised system is that the operational rules \emph{reverse} (or more precisely their rewritings in the new notation), which were primitive in the old system, are now derivable using the new rules for $\Phi_\alpha$ and $1_\alpha$ and the new \emph{reduce}. This supports our intuition that the rules \emph{reverse} do not participate in the proof-theoretic meaning of the connectives $\lc \alpha \rc$ and $[\alpha]$.

{\scriptsize{
\[
\AX$\Phi_{\alpha} \fCenter 1_\alpha$
\AX$1_\alpha\,; \{\alpha\} A \fCenter X$
\UI$1_\alpha \fCenter X < \{\alpha\} A$
\BI$\Phi_{\alpha} \fCenter X < \{\alpha\} A$
\UI$\Phi_\alpha\,; \{\alpha\} A \fCenter X$
\UI$\{\alpha\} A \fCenter X$
\UI$A \fCenter \RESalphaProxy X$
\UI$ [\alpha] A \fCenter \{\alpha\} \RESalphaProxy X$
\RightLabel{$comp$}
\UI$[\alpha] A \fCenter \Phi_{\alpha} > X$
\UI$\Phi_{\alpha}\,; [\alpha] A \fCenter X$
\UI$ [\alpha] A\,; \Phi_{\alpha} \fCenter X$
\UI$\Phi_{\alpha} \fCenter [\alpha] A > X$
\UI$1_{\alpha} \fCenter [\alpha] A > X$
\UI$[\alpha] A\,; 1_{\alpha} \fCenter X$
\UI$1_{\alpha}\,; [\alpha] A \fCenter X$
\DisplayProof
\qquad
\qquad
\AX$\Phi_{\alpha} \fCenter 1_\alpha$
\AX$X \fCenter 1_\alpha > \{\alpha\} A$
\UI$1_\alpha\,; X \fCenter \{\alpha\} A$
\UI$ 1_\alpha \fCenter \{\alpha\} A < X$
\BI$\Phi_{\alpha} \fCenter \{\alpha\} A < X$
\UI$\Phi_{\alpha}\,; X \fCenter \{\alpha\} A$
\UI$X \fCenter \Phi_{\alpha} > \{\alpha\} A$
\RightLabel{$reduce'$}
\UI$ X \fCenter \{ \alpha \} A$
\UI$\RESalphaProxy X \fCenter A$
\UI$\{\alpha\} \RESalphaProxy X \fCenter \langle \alpha \rangle A$
\LeftLabel{$comp$}
\UI$\Phi_\alpha\,; X \fCenter \langle \alpha \rangle A$
\UI$X\,; \Phi_\alpha \fCenter \langle \alpha \rangle A$
\UI$\Phi_\alpha \fCenter X > \langle \alpha \rangle A$
\UI$1_\alpha \fCenter X > \langle \alpha \rangle A$
\UI$X\,; 1_\alpha \fCenter \langle \alpha \rangle A$
\UI$1_\alpha\,; X \fCenter \langle \alpha \rangle A$
\UI$ X \fCenter 1_\alpha > \langle\alpha\rangle A$
\DisplayProof
\]
}}

\noindent The old rules \emph{reduce} are derivable as follows.
{\scriptsize{
\[
\AX$\Phi_{\alpha} \fCenter 1_\alpha$
\AX$1_\alpha\,; \{\alpha\} A \fCenter X$
\UI$1_\alpha \fCenter X < \{\alpha\} A$
\BI$\Phi_{\alpha} \fCenter X < \{\alpha\} A$
\UI$\Phi_\alpha\,; \{\alpha\} A \fCenter X$
\LeftLabel{$reduce'$}
\UI$\{\alpha\} A \fCenter X$
\DisplayProof
\qquad
\qquad
\AX$\Phi_{\alpha} \fCenter 1_\alpha$
\AX$ X \fCenter 1_\alpha > \{\alpha\} A$
\UI$ 1_\alpha\,; X \fCenter \{\alpha\} A$
\UI$X\,; 1_\alpha \fCenter \{\alpha\} A$
\UI$1_\alpha \fCenter X > \{\alpha\} A$
\BI$\Phi_{\alpha} \fCenter X> \{\alpha\} A$
\UI$X\,; \Phi_{\alpha} \fCenter \{\alpha\} A$
\UI$\Phi_{\alpha}\,; X \fCenter \{\alpha\} A$
\UI$X \fCenter \Phi_{\alpha} > \{\alpha\} A$
\RightLabel{$reduce'$}
\UI$X \fCenter \{\alpha\} A$
\DisplayProof
\]
}}


\noindent The old \emph{swap-in} rules are derivable in the revised calculus from the new \emph{swap-in} rules as follows.
{\scriptsize
\[
\AX$\Phi_\alpha \fCenter 1_\alpha$
\AX$1_\alpha\,; \{\alpha\} \{\aga\} X \fCenter Y$
\UI$1_\alpha \fCenter Y < \{\alpha\} \{\aga\} X$
\BI$\Phi_\alpha \fCenter Y < \{\alpha\} \{\aga\} X$
\UI$\Phi_\alpha\,; \{\alpha\} \{\aga\} X \fCenter Y$
\LeftLabel{\fns{$reduce'$}}
\UI$\{\alpha\} \{\aga\} X \fCenter Y$
\LeftLabel{\fns{$swap\textrm{-}in'$}}
\UI$\Phi_\alpha\,; \{\aga\}\{\beta\}_{\alpha\aga\beta}\, X \fCenter Y$
\UI$\Phi_\alpha \fCenter Y < \{\aga\}\{\beta\}_{\alpha\aga\beta}\, X$
\UI$1_\alpha \fCenter Y < \{\aga\}\{\beta\}_{\alpha\aga\beta}\, X$
\UI$1_\alpha\,; \{\aga\}\{\beta\}_{\alpha\aga\beta}\, X \fCenter Y$
\DisplayProof
\AX$\Phi_\alpha \fCenter 1_\alpha$
\AX$Y \fCenter 1_\alpha > \{\alpha\} \{\aga\} X$
\UI$ 1_\alpha ; Y \fCenter \{\alpha\} \{\aga\} X$
\UI$ 1_\alpha \fCenter \{\alpha\} \{\aga\} X < Y$
\BI$\Phi_\alpha \fCenter \{\alpha\} \{\aga\} X < Y$
\UI$\Phi_\alpha\,; Y \fCenter \{\alpha\} \{\aga\} X$
\UI$Y \fCenter \Phi_\alpha > \{\alpha\} \{\aga\} X$
\RightLabel{\fns{$reduce'$}}
\UI$Y \fCenter \{\alpha\} \{\aga\} X$
\RightLabel{\fns{$swap\textrm{-}in'$}}
\UI$Y \fCenter \{\alpha\} \{\aga\} X$
\UI$Y \fCenter \Phi_\alpha > \{\aga\}\{\beta\}_{\alpha\aga\beta}\, X$
\DisplayProof
\]
}

\vspace{5px}

\noindent The old \emph{swap-out} rules (translated into D'.EAK) are derivable using the new \emph{swap-out} rules:

\begin{center}
{\scriptsize
\begin{tabular}{c}
\AXC{$\Phi_\alpha \vdash 1_\alpha$}
\AXC{$1_\alpha\,; \{\aga\}\{\beta_1\}\, X \vdash Y\mid \alpha\aga\beta_1$}
\UIC{$1_\alpha \vdash Y < \{\aga\}\{\beta_1\}\, X\mid \alpha\aga\beta_1$}
\BIC{$\Phi_\alpha \vdash Y < \{\aga\}\{\beta_1\}\, X\mid \alpha\aga\beta_1$}
\UIC{$\Phi_\alpha\,; \{\aga\}\{\beta_1\} \,X \vdash Y\mid \alpha\aga\beta_1$}
\LeftLabel{\scriptsize{$reduce'$}}
\UIC{$\{\aga\}\{\beta_1\}\, X \vdash Y\mid \alpha\aga\beta_1$}
\AXC{$\phantom{\{}\cdots\phantom{\}}$}
\noLine
\UIC{$\phantom{\{}\cdots\phantom{\}}$}
\noLine
\UIC{$\phantom{\{}\cdots\phantom{\}}$}
\noLine
\UIC{$\phantom{\{}\cdots\phantom{\}}$}
\noLine
\UIC{$\phantom{\{}\cdots\phantom{\}}$}
\AXC{$\Phi_\alpha \vdash 1_\alpha$}
\AXC{$1_\alpha\,; \{\aga\}\{\beta_n\}\, X \vdash Y\mid \alpha\aga\beta_n$}
\UIC{$1_\alpha \vdash Y < \{\aga\}\{\beta_n\}\, X\mid \alpha\aga\beta_n$}
\BIC{$\Phi_\alpha \vdash Y < \{\aga\}\{\beta_n\}\, X\mid \alpha\aga\beta_n$}
\UIC{$\Phi_\alpha\,; \{\aga\}\{\beta_n\} \,X \vdash Y\mid \alpha\aga\beta_n$}
\LeftLabel{\scriptsize{$reduce'$}}
\UIC{$\{\aga\}\{\beta_n\}\, X \vdash Y\mid \alpha\aga\beta_n$}
\RightLabel{\scriptsize{$swap\textrm{-}out'$}}
\TI$\{\alpha\} \{\aga\} X \fCenter \Bigsemic\Big(Y\mid \alpha\aga\beta\Big)$
\UI$1_\alpha \fCenter \{\alpha\} \{\aga\} X > \Bigsemic\Big(Y\mid \alpha\aga\beta\Big)$
\UI$ \{\alpha\} \{\aga\} X; 1_\alpha \fCenter  \Bigsemic\Big(Y\mid \alpha\aga\beta\Big)$
\UI$1_\alpha ; \{\alpha\} \{\aga\} X \fCenter \Bigsemic\Big(Y\mid \alpha\aga\beta\Big)$
\DisplayProof
\end{tabular}
}
\end{center}
\medskip

\begin{center}
{\scriptsize{
\begin{tabular}{c}
\AXC{$\Phi_\alpha \vdash 1_\alpha$}
\AXC{$Y \vdash 1_\alpha\,> \{\aga\}\{\beta_1\}\, X \mid \alpha\aga\beta_1$}
\UIC{$1_\alpha\, ; Y \vdash \{\aga\}\{\beta_1\}\, X \mid \alpha\aga\beta_1$}
\UIC{$1_\alpha \vdash \{\aga\}\{\beta_1\}\, X \mid \alpha\aga\beta_1 < Y$}
\BIC{$\Phi_\alpha \vdash \{\aga\}\{\beta_1\}\, X \mid \alpha\aga\beta_1 < Y$}
\UIC{$\Phi_\alpha\,; Y \vdash \{\aga\}\{\beta_1\}\, X \mid \alpha\aga\beta_1$}
\UIC{$Y \vdash \Phi_\alpha > \{\aga\}\{\beta_1\}\, X \mid \alpha\aga\beta_1$}
\LeftLabel{\scriptsize{$reduce'$}}
\UIC{$Y \vdash \{\aga\}\{\beta_1\}\, X \mid \alpha\aga\beta_1$}
\AXC{$\phantom{\{}\cdots\phantom{\}}$}
\noLine
\UIC{$\phantom{\{}\cdots\phantom{\}}$}
\noLine
\UIC{$\phantom{\{}\cdots\phantom{\}}$}
\noLine
\UIC{$\phantom{\{}\cdots\phantom{\}}$}
\noLine
\UIC{$\phantom{\{}\cdots\phantom{\}}$}
\noLine
\UIC{$\phantom{\{}\cdots\phantom{\}}$}
\noLine
\UIC{$\phantom{\{}\cdots\phantom{\}}$}
\AXC{$\Phi_\alpha \vdash 1_\alpha$}
\AXC{$Y \vdash 1_\alpha\,> \{\aga\}\{\beta_n\}\, X \mid \alpha\aga\beta_n$}
\UIC{$1_\alpha\, ; Y \vdash \{\aga\}\{\beta_n\}\, X \mid \alpha\aga\beta_n$}
\UIC{$1_\alpha \vdash \{\aga\}\{\beta_n\}\, X \mid \alpha\aga\beta_n < Y$}
\BIC{$\Phi_\alpha \vdash \{\aga\}\{\beta_n\}\, X \mid \alpha\aga\beta_n < Y$}
\UIC{$\Phi_\alpha\,; Y \vdash \{\aga\}\{\beta_n\}\, X \mid \alpha\aga\beta_n$}
\UIC{$Y \vdash \Phi_\alpha > \{\aga\}\{\beta_n\}\, X \mid \alpha\aga\beta_n$}
\LeftLabel{\scriptsize{$reduce'$}}
\UIC{$Y \vdash \{\aga\}\{\beta_n\}\, X \mid \alpha\aga\beta_n$}
\RightLabel{\scriptsize{$swap\textrm{-}out'$}}
\TI$\Bigsemic\Big(Y\mid \alpha\aga\beta\Big) \fCenter  \{\alpha\} \{\aga\} X$
\UI$1_\alpha \fCenter \Bigsemic\Big(Y\mid \alpha\aga\beta\Big) >  \{\alpha\} \{\aga\} X $
\UI$ \Bigsemic\Big(Y\mid \alpha\aga\beta\Big) ; 1_\alpha \fCenter    \{\alpha\} \{\aga\} X$
\UI$\Bigsemic\Big(Y\mid \alpha\aga\beta\Big) ; 1_\alpha \fCenter  \{\alpha\} \{\aga\} X$
\UI$\Bigsemic\Big(Y\mid \alpha\aga\beta\Big)  \fCenter   1_\alpha > \{\alpha\} \{\aga\} X$
\DisplayProof
\end{tabular}
}}
\end{center}


\subsection{Soundness of $comp$ rules in the final coalgebra}\label{Soundness of comp}

We address the reader to \cite{GKPLori} for details on the final coalgebra semantics for dynamic epistemic logic.


To prove the soundness of the rules above in the final coalgebra it suffices to check that for every formula $A$,

$$[\alpha][\alpha^{-1}] \val{A}_{\bbZ}\subseteq \val{Pre(\alpha)\rightarrow A}_\bbZ \ \mbox{ and }\ \val{Pre(\alpha) \ ;\  A}_\bbZ\subseteq \langle\alpha\rangle\langle\alpha^{-1}\rangle \val{A}_{\bbZ}.$$

We will make use of the following  general fact:
\begin{fact}
\label{fact on rels}
Let $R$ be a binary relation on a set $X$ and let $R^{-1}$ be its converse. Then,
$$[Dom(R)\times Dom(R)]\cap \Delta_X\subseteq R;R^{-1},$$
where $Dom(R) = \{x\in X\mid xRy$ for some $y\in X\}$, and $\Delta_X = \{(x, x)\mid x\in X\}$.
\end{fact}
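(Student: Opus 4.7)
The plan is to prove this fact by a straightforward element chase, unpacking the definitions of $Dom(R)$, $R^{-1}$, and relational composition. There is no real obstacle here; the statement is essentially the observation that if $x$ has some $R$-successor, then going to that successor and immediately coming back via $R^{-1}$ returns us to $x$.

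First I would pick an arbitrary element $(x,x') \in [Dom(R)\times Dom(R)]\cap \Delta_X$. Because $(x,x')\in \Delta_X$, we have $x=x'$, so it suffices to show $(x,x)\in R;R^{-1}$. Since $x\in Dom(R)$, by definition of the domain there exists some $y\in X$ with $xRy$. By definition of the converse, this very same pair witnesses $yR^{-1}x$.

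Next I would apply the definition of relational composition: $(x,x)\in R;R^{-1}$ precisely when there exists $z\in X$ with $xRz$ and $zR^{-1}x$. Taking $z:=y$ from the previous step, both conjuncts hold, so $(x,x)\in R;R^{-1}$, as required.

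The only point worth flagging is a minor notational one: the paper uses $R;R^{-1}$ for relational composition in the diagrammatic order (first $R$, then $R^{-1}$), which matches the ``there exists $y$ with $xRy$ and $yR^{-1}x$'' reading used above; no subtlety arises from the convention. Note also that the hypothesis $x'\in Dom(R)$ is not actually needed for the inclusion, since $x=x'$ already lies in $Dom(R)$; the statement as phrased is therefore slightly redundant but correct, and this redundancy does not affect the proof.
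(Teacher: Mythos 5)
Your element-chase is correct and is exactly the ``straightforward'' argument the paper has in mind (its own proof of this fact consists of the single word \emph{Straightforward}). Your reading of $R;R^{-1}$ in diagrammatic order is consistent with the paper's subsequent use of $(\alpha;\alpha^{-1})[\val{A}]=\alpha^{-1}[\alpha[\val{A}]]$, and your remark that the second factor of $Dom(R)\times Dom(R)$ is redundant is accurate and harmless.
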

\begin{proof}
Straightforward.
\end{proof}

\begin{fact}
The following $comp$ rules:
\[
\AxiomC{$Y \vdash \{\alpha\} \RESalphaProxy X$}
\UnaryInfC{$ Y \vdash \underline{Pre} (\alpha) >X$}
\DisplayProof
\qquad
\AxiomC{$\{\alpha\} \RESalphaProxy X \vdash Y$}
\UnaryInfC{$ \underline{Pre} (\alpha)\,; X \vdash Y$}
\DisplayProof
\]

 are sound in the final coalgebra.
\end{fact}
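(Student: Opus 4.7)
The paragraph preceding the statement has already reduced the soundness of the two $comp$ rules to the verification of the two inclusions
\[
[\alpha][\alpha^{-1}]\val{A}_\bbZ \subseteq \val{Pre(\alpha) \rightarrow A}_\bbZ, \qquad \val{Pre(\alpha) \wedge A}_\bbZ \subseteq \langle\alpha\rangle\langle\alpha^{-1}\rangle \val{A}_\bbZ,
\]
where I write $\langle\alpha^{-1}\rangle$ and $[\alpha^{-1}]$ for the interpretations of $\RESalphaDia$ and $\RESalphaBox$ in $\bbZ$, i.e., for the operators $\pDiam{\alpha_\bbZ}$ and $\pBox{\alpha_\bbZ}$ from Proposition~\ref{prop:finalsemantics}. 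The plan is to derive both inclusions from Fact~\ref{fact on rels} applied to $R=\alpha_\bbZ$, after recording the semantic identity $\val{Pre(\alpha)}_\bbZ = Dom(\alpha_\bbZ)$, which follows from the description of $\alpha_\bbZ$ given in section~\ref{sec:final coalgebra sem}: a point $z$ of the final coalgebra satisfies $Pre(\alpha)$ precisely when the action $\alpha$ is executable at $z$, i.e., when $z$ has an $\alpha_\bbZ$-successor.

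For the first inclusion I would pick $z \in [\alpha][\alpha^{-1}]\val{A}_\bbZ$ with $z \in \val{Pre(\alpha)}_\bbZ$ (the complementary case being trivial) and show $z \in \val{A}_\bbZ$. Since $z \in Dom(\alpha_\bbZ)$, Fact~\ref{fact on rels} yields $(z,z) \in \alpha_\bbZ;\alpha_\bbZ^{-1}$, i.e.\ some $y$ with $z\,\alpha_\bbZ\,y$ and $y\,\alpha_\bbZ^{-1}\,z$; unfolding the outer box at $y$ and then the inner box at $z$ along this path forces $z \in \val{A}_\bbZ$. For the second inclusion I would take $z \in \val{Pre(\alpha)\wedge A}_\bbZ$, use Fact~\ref{fact on rels} again to extract a $y$ with $z\,\alpha_\bbZ\,y\,\alpha_\bbZ^{-1}\,z$, and observe that $z$ itself witnesses $y \in \langle\alpha^{-1}\rangle\val{A}_\bbZ$ (since $z \in \val{A}_\bbZ$ by hypothesis and $y\,\alpha_\bbZ^{-1}\,z$), whence $y$ witnesses $z \in \langle\alpha\rangle\langle\alpha^{-1}\rangle\val{A}_\bbZ$.

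The argument is essentially a two-line consequence of Fact~\ref{fact on rels}; the main conceptual point—and the one that motivates the introduction of the structural proxy $\Phi_\alpha$ and its operational counterpart $1_\alpha$ elsewhere in the paper—is the identification of $Pre(\alpha)$ with the domain of the interpreting relation $\alpha_\bbZ$ on the final coalgebra. The only place where bookkeeping is needed is the check that the semantic translation of the antecedent of the $comp$ rules, where $\{\alpha\}\RESalphaProxy$ is contextually interpreted according to Table~\ref{pic:1} as $\langle\alpha\rangle\langle\alpha^{-1}\rangle$ in precedent position and as $[\alpha][\alpha^{-1}]$ in succedent position, lines up with the two inclusions displayed above; but this is exactly the reduction already carried out in the paragraph preceding the Fact, so no obstacle is expected.
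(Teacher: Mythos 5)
Your proposal is correct and follows essentially the same route as the paper: both arguments reduce soundness to the two displayed inclusions, invoke Fact~\ref{fact on rels} for $R=\alpha_\bbZ$, and rest on the identification $\val{Pre(\alpha)}_\bbZ = Dom(\alpha_\bbZ)$. The only difference is presentational — the paper carries out the same verification as a relation-algebraic chain of equalities and one inclusion (taking complements for the box case), where you argue element-wise — so there is nothing substantive to add.
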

\begin{proof} $\quad$
\begin{center}
\begin{tabular}{r c l l}
$\langle\alpha\rangle\langle\alpha^{-1}\rangle \val{A}_{\bbZ} $ & $=$   &$\alpha^{-1}[\alpha [\val{A}_{\bbZ}] ]$ &\\
& $=$  &$(\alpha;\alpha^{-1}) [\val{A}_{\bbZ}]$ &\\
& $\supseteq$  &$S [\val{A}_{\bbZ}]$ & Fact \ref{fact on rels}\\
& $=$  &$Dom (\alpha)\cap \val{A}_{\bbZ}$ & \\
& $=$  &$\val{Pre (\alpha)\ ;\  A}_{\bbZ},$ & \\

\end{tabular}
\end{center}

\begin{center}
\begin{tabular}{r c l l}
$[\alpha][\alpha^{-1}] \val{A}_{\bbZ}$ & $=$  &$(\alpha^{-1}[  ( [\alpha^{-1}] \val{A}_{\bbZ})^c])^c$ &\\
& $=$  &$(\alpha^{-1}[\alpha [\val{A}_{\bbZ}^c]])^c$ &\\
& $=$  &$((\alpha;\alpha^{-1}) [\val{A}_{\bbZ}^c])^c$ &\\
& $\subseteq$  &$(S [\val{A}_{\bbZ}^c])^c$ & Fact \ref{fact on rels}\\
& $=$  &$(Dom (\alpha)\cap \val{A}_{\bbZ}^c])^c$ & \\
& $=$  &$Dom (\alpha)^c\cup \val{A}_{\bbZ}$ & \\
& $=$  &$\val{Pre (\alpha)\rightarrow A}_{\bbZ},$ & \\

\end{tabular}
\end{center}

where $S = [Dom(R)\times Dom(R)]\cap \Delta_X$.
\end{proof}

\section{Cut elimination for D'.EAK}\label{cut in D'.EAK}

In the present section, we report on the remaining cases for the verification of condition $C_8$ for D'.EAK; these cases are needed already for the cut elimination \'a la Gentzen for D.EAK, but do not appear in \cite{GKPLori}.

First we consider the $atom$ rule (see page~\pageref{pageref:atom}).

\begin{center}
{\footnotesize{
\bottomAlignProof
\begin{tabular}{lcr}
\AX$\Gamma p\fCenter p$
\AX$p \fCenter \Delta p$
\BI$\Gamma p \fCenter \Delta p$
\DisplayProof
 & $\rightsquigarrow$ &
\bottomAlignProof
\AX$\Gamma p \fCenter \Delta p$
\DisplayProof
 \\
\end{tabular}
}}
\end{center}

\noindent Now we treat the introductions of the connectives of the propositional base (we also treat here the cases relative to the two additional arrows $\leftarrow$ and $\pdra$ added to our presentation of D.EAK):

\begin{center}
{\scriptsize{
\bottomAlignProof
\begin{tabular}{lcr}
\AX$\textrm{I}\ \fCenter\ \top$
\AXC{\ \ $\vdots$ \raisebox{1mm}{$\pi$}}
\noLine
\UI$\textrm{I}\ \fCenter\ X$
\UI$\top\ \fCenter\ X$
\BI$\textrm{I}\ \fCenter\ X$
\DisplayProof
 & $\rightsquigarrow$ &
\bottomAlignProof
\AXC{\ \ $\vdots$ \raisebox{1mm}{$\pi$}}
\noLine
\UI$\textrm{I}\ \fCenter\ X$
\DisplayProof
 \\
\end{tabular}
}}
\end{center}

\begin{center}
{\scriptsize{
\bottomAlignProof
\begin{tabular}{lcr}
\AXC{\ \ $\vdots$ \raisebox{1mm}{$\pi$}}
\noLine
\UI$ X \ \fCenter\ \textrm{I}$
\UI$ X \fCenter\ \bot$
\AX$\bot \fCenter\ \textrm{I}$
\BI$ X\ \fCenter\ \textrm{I}$
\DisplayProof
 & $\rightsquigarrow$ &
\bottomAlignProof
\AXC{\ \ $\vdots$ \raisebox{1mm}{$\pi$}}
\noLine
\UI$X \fCenter\ \textrm{I}$
\DisplayProof
 \\
\end{tabular}
}}
\end{center}

\begin{center}
{\scriptsize{
\bottomAlignProof
\begin{tabular}{lcr}
\AXC{\ \ \ $\vdots$ \raisebox{1mm}{$\pi_1$}}
\noLine
\UI$X\ \fCenter\ A$
\AXC{\ \ \ $\vdots$ \raisebox{1mm}{$\pi_2$}}
\noLine
\UI$Y\ \fCenter\ B$
\BI$X\,; Y\ \fCenter\ A\wedge B$
\AXC{\ \ \ $\vdots$ \raisebox{1mm}{$\pi_3$}}
\noLine
\UI$A\,; B\ \fCenter\ Z$
\UI$A\wedge B\ \fCenter\ Z$
\BI$X\,; Y\ \fCenter\ Z$
\DisplayProof
 & $\rightsquigarrow$ &
\bottomAlignProof
\AXC{\ \ \ $\vdots$ \raisebox{1mm}{$\pi_2$}}
\noLine
\UI$Y\ \fCenter\ B$
\AXC{\ \ \ $\vdots$ \raisebox{1mm}{$\pi_1$}}
\noLine
\UI$X\ \fCenter\ A$
\AXC{\ \ \ $\vdots$ \raisebox{1mm}{$\pi_3$}}
\noLine
\UI$A\,; B\ \fCenter\ Z$
\UI$B\,; A\ \fCenter\ Z$
\UI$A\ \fCenter\ B > Z$
\BI$X\ \fCenter\ B > Z$
\UI$B\,; X\ \fCenter\ Z$
\UI$X\,; B\ \fCenter\ Z$
\UI$B\ \fCenter\ X > Z$
\BI$Y\ \fCenter\ X > Z$
\UI$X\,; Y\ \fCenter\ Z$
\DisplayProof
 \\
\end{tabular}
}}
\end{center}


\begin{center}
{\scriptsize{
\bottomAlignProof
\begin{tabular}{@{}ccc@{}}
\AXC{\ \ \ $\vdots$ \raisebox{1mm}{$\pi_3$}}
\noLine
\UI$Z\ \fCenter\ B\,; A$
\UI$Z\ \fCenter\ B\vee A $
\AXC{\ \ \ $\vdots$ \raisebox{1mm}{$\pi_1$}}
\noLine
\UI$B\ \fCenter\ Y$
\AXC{\ \ \ $\vdots$ \raisebox{1mm}{$\pi_2$}}
\noLine
\UI$A\ \fCenter\ X$
\BI$B\vee A\ \fCenter\ Y\,; X$
\BI$Z\ \fCenter\ Y\,; X$
\DisplayProof
& $\rightsquigarrow$ &
\bottomAlignProof
\AXC{\ \ \ $\vdots$ \raisebox{1mm}{$\pi_3$}}
\noLine
\UI$Z\ \fCenter\ B\,; A$
\UI$Z\ \fCenter\ A\,; B$
\UI$A > Z\ \fCenter\ B$
\AXC{\ \ \ $\vdots$ \raisebox{1mm}{$\pi_1$}}
\noLine
\UI$B\ \fCenter\ Y$
\BI$A > Z\ \fCenter\ Y$
\UI$Z\ \fCenter\ A\,; Y$
\UI$Z\ \fCenter\ Y\,; A$
\UI$Y > Z\ \fCenter\ A$
\AXC{\ \ \ $\vdots$ \raisebox{1mm}{$\pi_2$}}
\noLine
\UI$A\ \fCenter\ X$
\BI$Y > Z\ \fCenter\ X$
\UI$Z\ \fCenter\ Y\,; X$
\DisplayProof
 \\
\end{tabular}
}}
\end{center}

\begin{center}
{\scriptsize{
\bottomAlignProof
\begin{tabular}{@{}ccc@{}}
\AXC{\ \ \ $\vdots$ \raisebox{1mm}{$\pi_1$}}
\noLine
\UI$Y\ \fCenter\ A > B$
\UI$Y\ \fCenter\ A\rightarrow B$
\AXC{\ \ \ $\vdots$ \raisebox{1mm}{$\pi_2$}}
\noLine
\UI$X\ \fCenter\ A$
\AXC{\ \ \ $\vdots$ \raisebox{1mm}{$\pi_3$}}
\noLine
\UI$B\ \fCenter\ Z$
\BI$A\rightarrow B\ \fCenter\ X > Z$
\BI$Y\ \fCenter\ X > Z$
\DisplayProof
& $\rightsquigarrow$&
\bottomAlignProof
\AXC{\ \ \ $\vdots$ \raisebox{1mm}{$\pi_2$}}
\noLine
\UI$X\ \fCenter\ A$
\AXC{\ \ \ $\vdots$ \raisebox{1mm}{$\pi_1$}}
\noLine
\UI$Y\ \fCenter\ A > B$
\UI$A\,; Y\ \fCenter\ B$
\AXC{\ \ \ $\vdots$ \raisebox{1mm}{$\pi_3$}}
\noLine
\UI$B\ \fCenter\ Z$
\BI$A\,; Y\ \fCenter\ Z$
\UI$Y\,; A\ \fCenter\ Z$
\UI$A\ \fCenter\ Y > Z$
\BI$X\ \fCenter\ Y > Z$
\UI$Y\,; X\ \fCenter\ Z$
\UI$X\,; Y\ \fCenter\ Z$
\UI$Y\ \fCenter\ X > Z$
\DisplayProof
 \\
\end{tabular}
}}
\end{center}


\begin{center}
{\scriptsize{
\bottomAlignProof
\begin{tabular}{@{}ccc@{}}
\AXC{\ \ \ $\vdots$ \raisebox{1mm}{$\pi_1$}}
\noLine
\UI$Y\ \fCenter\ B < A$
\UI$Y\ \fCenter\ B\leftarrow A$
\AXC{\ \ \ $\vdots$ \raisebox{1mm}{$\pi_2$}}
\noLine
\UI$B\ \fCenter\ Z$
\AXC{\ \ \ $\vdots$ \raisebox{1mm}{$\pi_3$}}
\noLine
\UI$X\ \fCenter\ A$
\BI$B\leftarrow A\ \fCenter\ Z < X$
\BI$Y\ \fCenter\ Z< X$
\DisplayProof
&$\rightsquigarrow$ &
\bottomAlignProof
\AXC{\ \ \ $\vdots$ \raisebox{1mm}{$\pi_2$}}
\noLine
\UI$X\ \fCenter\ A$
\AXC{\ \ \ $\vdots$ \raisebox{1mm}{$\pi_1$}}
\noLine
\UI$Y\ \fCenter\ B< A$
\UI$ Y; A\ \fCenter\ B$
\AXC{\ \ \ $\vdots$ \raisebox{1mm}{$\pi_3$}}
\noLine
\UI$B\ \fCenter\ Z$
\BI$ Y; A\ \fCenter\ Z$
\UI$ A; Y\ \fCenter\ Z$
\UI$A\ \fCenter\ Z < Y$
\BI$X\ \fCenter\  Z <Y$
\UI$ X; Y\ \fCenter\ Z$
\UI$ Y; X\ \fCenter\ Z$
\UI$Y\ \fCenter\  Z < X$
\DisplayProof
 \\
\end{tabular}
}}
\end{center}


\begin{center}
{\scriptsize{
\bottomAlignProof
\begin{tabular}{@{}ccc@{}}
\AXC{\ \ \ $\vdots$ \raisebox{1mm}{$\pi_2$}}
\noLine
\UI$A\ \fCenter\ Y$
\AXC{\ \ \ $\vdots$ \raisebox{1mm}{$\pi_3$}}
\noLine
\UI$Z\ \fCenter\ B$
\BI$Y > Z\ \fCenter\ A \pdra B$
\AXC{\ \ \ $\vdots$ \raisebox{1mm}{$\pi_1$}}
\noLine
\UI$A > B\ \fCenter\ X$
\UI$A \pdra B\ \fCenter\ X$
\BI$Y > Z\ \fCenter\ X$
\DisplayProof
&$\rightsquigarrow$&
\bottomAlignProof
\AXC{\ \ \ $\vdots$ \raisebox{1mm}{$\pi_3$}}
\noLine
\UI$Z\ \fCenter\ B$
\AXC{\ \ \ $\vdots$ \raisebox{1mm}{$\pi_1$}}
\noLine
\UI$A > B\ \fCenter\ X$
\UI$B\ \fCenter\ A\,; X$
\BI$Z\ \fCenter\ A\,; X$
\UI$Z\ \fCenter\ X\,; A$
\UI$X > Z\ \fCenter\ A$
\AXC{\ \ \ $\vdots$ \raisebox{1mm}{$\pi_2$}}
\noLine
\UI$A\ \fCenter\ Y$
\BI$X > Z\ \fCenter\ Y$
\UI$Z\ \fCenter\ X\,; Y$
\UI$Z\ \fCenter\ Y\,; X$
\UI$Y > Z\ \fCenter\ X$
\DisplayProof
 \\
\end{tabular}
}}
\end{center}

\begin{center}
{\scriptsize{
\bottomAlignProof
\begin{tabular}{@{}ccc@{}}
\AXC{\ \ \ $\vdots$ \raisebox{1mm}{$\pi_2$}}
\noLine
\UI$Y\ \fCenter\ B$
\AXC{\ \ \ $\vdots$ \raisebox{1mm}{$\pi_3$}}
\noLine
\UI$A\ \fCenter\ Z$
\BI$Y < Z\ \fCenter\ B \pdla A$
\AXC{\ \ \ $\vdots$ \raisebox{1mm}{$\pi_1$}}
\noLine
\UI$B < A \ \fCenter\ X$
\UI$B \pdla A\ \fCenter\ X$
\BI$Y < Z\ \fCenter\ X$
\DisplayProof
 &$\rightsquigarrow$&
\bottomAlignProof
\AXC{\ \ \ $\vdots$ \raisebox{1mm}{$\pi_3$}}
\noLine
\UI$Y\ \fCenter\ B$
\AXC{\ \ \ $\vdots$ \raisebox{1mm}{$\pi_1$}}
\noLine
\UI$B < A \ \fCenter\ X$
\UI$B\ \fCenter\  X; A$
\BI$Y\ \fCenter\  X; A$
\UI$Y\ \fCenter\ A; X$
\UI$ Y < X\ \fCenter\ A$
\AXC{\ \ \ $\vdots$ \raisebox{1mm}{$\pi_2$}}
\noLine
\UI$A\ \fCenter\ Z$
\BI$Y < X\ \fCenter\ Z$
\UI$Y\ \fCenter\ Z\,; X$
\UI$Y\ \fCenter\ X\,; Z$
\UI$Y < Z\ \fCenter\ Y$
\DisplayProof
 \\
\end{tabular}
}}
\end{center}

\noindent Now we turn to the part of D'.EAK with static modalities. We omit  the proofs for  $\RESagaDia$ and $\RESagaBox$ , because they analogous to the transformations of  $\langle \aga \rangle$ and $[\aga]$.

\begin{center}
{\scriptsize{
\begin{tabular}{@{}rcl@{}}
\bottomAlignProof
\AXC{\ \ \ $\vdots$ \raisebox{1mm}{$\pi_1$}}
\noLine
\UI$X\ \fCenter\ A$
\UI$\{ \aga\} X\ \fCenter\ \langle \aga \rangle A$
\AXC{\ \ \ $\vdots$ \raisebox{1mm}{$\pi_2$}}
\noLine
\UI$\{ \aga\} A\ \fCenter\ Y$
\UI$\langle \aga \rangle A\ \fCenter\ Y$
\BI$\{\aga\} X\ \fCenter\ Y$
\DisplayProof
 & $\rightsquigarrow$ &
\bottomAlignProof
\AXC{\ \ \ $\vdots$ \raisebox{1mm}{$\pi_1$}}
\noLine
\UI$X\ \fCenter\ A$
\AXC{\ \ \,$\vdots$ \raisebox{1mm}{$\pi_2$}}
\noLine
\UI$\{ \aga \} A\ \fCenter\ Y$
\UI$A\ \fCenter\ \RESagaProxy Y$
\BI$X\ \fCenter\ \RESagaProxy Y$
\UI$\{\aga\} X\ \fCenter\ Y$
\DisplayProof
 \\
\end{tabular}
}}
\end{center}

\begin{center}
{\scriptsize{
\begin{tabular}{@{}rcl@{}}
\bottomAlignProof
\AXC{\ \ \ $\vdots$ \raisebox{1mm}{$\pi_1$}}
\noLine
\UI$X\ \fCenter\ \{\aga\} A$
\UI$X\ \fCenter\ [\aga] A$
\AXC{\ \ \ $\vdots$ \raisebox{1mm}{$\pi_2$}}
\noLine
\UI$A\ \fCenter\ Y$
\UI$[\aga] A\ \fCenter\ \{\aga\} Y$
\BI$X \fCenter \{\aga\} Y$
\DisplayProof
 & $\rightsquigarrow$ &
\bottomAlignProof
\AXC{\ \ \ $\vdots$ \raisebox{1mm}{$\pi_1$}}
\noLine
\UI$X\ \fCenter\ \{\aga\} A$
\UI$\RESagaProxy X \fCenter A$
\AXC{\ \ \ $\vdots$ \raisebox{1mm}{$\pi_2$}}
\noLine
\UI$A\ \fCenter\ Y$
\BI$\RESagaProxy X\ \fCenter\ Y$
\UI$X\ \fCenter\ \{\aga\}Y$
\DisplayProof
 \\
\end{tabular}
}}
\end{center}

\noindent The transformations of the dynamic modalities are analogous to the ones of static modalities and, again, we only show them for $\langle \alpha \rangle$ and $[\alpha]$.
\begin{center}
{\scriptsize{
\begin{tabular}{@{}rcl@{}}
\bottomAlignProof
\AXC{\ \ \ $\vdots$ \raisebox{1mm}{$\pi_1$}}
\noLine
\UI$X \fCenter A$
\UI$\{\alpha\} X\ \fCenter\ \lc\alpha\rc A$
\AXC{\ \ \ $\vdots$ \raisebox{1mm}{$\pi_2$}}
\noLine
\UI$\{\alpha\} A\ \fCenter\ Y$
\UI$\lc\alpha\rc A\ \fCenter\ Y$
\BI$\{\alpha\} X\ \fCenter\ Y$
\DisplayProof
 & $\rightsquigarrow$ &
\!\!\!\!\!\!
\bottomAlignProof
\AXC{\ \ \ $\vdots$ \raisebox{1mm}{$\pi_1$}}
\noLine
\UI$X\ \fCenter\ A$
\AXC{\ \ \ $\vdots$ \raisebox{1mm}{$\pi_2$}}
\noLine
\UI$\{\alpha\} A\ \fCenter\ Y$
\UI$A\ \fCenter\ \RESalphaProxy Y$
\BI$X\ \fCenter\ \RESalphaProxy Y$
\UI$\{\alpha\} X\ \fCenter\ Y$
\DisplayProof
 \\
\end{tabular}
}}
\end{center}

\begin{center}
{\scriptsize{
\begin{tabular}{@{}rcl@{}}
\bottomAlignProof
\AXC{\ \ \ $\vdots$ \raisebox{1mm}{$\pi_1$}}
\noLine
\UI$X\ \fCenter\ \{\alpha\} A$
\UI$X\ \fCenter\ [\alpha] A$
\AXC{\ \ \ $\vdots$ \raisebox{1mm}{$\pi_2$}}
\noLine
\UI$A\ \fCenter\ Y$
\UI$[\alpha] A\ \fCenter\ \{\alpha\} Y$
\BI$X \fCenter \{\alpha\} Y$
\DisplayProof
 & $\rightsquigarrow$ &
\bottomAlignProof
\AXC{\ \ \ $\vdots$ \raisebox{1mm}{$\pi_1$}}
\noLine
\UI$X\ \fCenter\ \{\alpha\} A$
\UI$\RESalphaProxy X \fCenter A$
\AXC{\ \ \ $\vdots$ \raisebox{1mm}{$\pi_2$}}
\noLine
\UI$A\ \fCenter\ Y$
\BI$\RESalphaProxy X\ \fCenter\ Y$
\UI$X\ \fCenter\ \{\alpha\}Y$
\DisplayProof
 \\
\end{tabular}
}}
\end{center}

\section{Completeness of D'.EAK}\label{ssec: completeness}

To prove, indirectly, the completeness of D'.EAK it is enough to show that all the axioms and rules of IEAK are theorems and, respectively, derived or admissible rules of D'.EAK. Below we show the derivations of the dynamic axioms and we leave the remaining axioms and rules to the reader.

\noindent
$\rule{126.2mm}{0.5pt}$
$\lc\alpha\rc\, p \dashv\vdash 1_\alpha \pand p$

\begin{center}
{\scriptsize{
\begin{tabular}{@{}lr@{}}
\AX$\Phi_\alpha \fCenter 1_\alpha$
\AX$\{\alpha\}\, p \fCenter p$
\BI$\Phi_\alpha\,; \{\alpha\}\, p\fCenter 1_\alpha \pand p$
\LeftLabel{\scriptsize{$reduce'$}}
\UI$\{\alpha\}\, p \fCenter 1_\alpha \pand p$
\UI$\lc\alpha\rc\, p \fCenter 1_\alpha \pand p$
\DisplayProof

 & 

\AX$\RESalphaProxy\, p \fCenter p$
\UI$\{\alpha\} \RESalphaProxy\, p \fCenter \lc\alpha\rc\, p$
\LeftLabel{\scriptsize{$comp$}}
\UI$\Phi_\alpha\,; p \fCenter \lc\alpha\rc\, p$
\UI$\Phi_\alpha \fCenter \lc\alpha\rc\, p < p$
\UI$1_\alpha \fCenter \lc\alpha\rc\, p < p$
\UI$1_\alpha\,; p \fCenter \lc\alpha\rc\, p$
\UI$1_\alpha \pand p \fCenter \lc\alpha\rc\, p$
\DisplayProof
 \\
\end{tabular}
}}
\end{center}

\noindent
$\rule{126.2mm}{0.5pt}$
$\ls\alpha\rs\, p \dashv\vdash 1_\alpha \pra p$

\begin{center}
{\scriptsize{
\begin{tabular}{@{}lr@{}}
\AX$p \fCenter \RESalphaProxy\, p$
\UI$\ls\alpha\rs\, p \fCenter \{\alpha\} \RESalphaProxy\, p$
\RightLabel{\scriptsize{$comp$}}
\UI$\ls\alpha\rs\, p \fCenter \Phi_{\alpha} > p$
\UI$\Phi_{\alpha}\,; \ls\alpha\rs\, p \fCenter p$
\UI$\Phi_{\alpha} \fCenter p < \ls\alpha\rs\, p$
\UI$1_{\alpha} \fCenter p < \ls\alpha\rs\, p$
\UI$1_{\alpha}\,; \ls\alpha\rs\, p \fCenter p$
\UI$\ls\alpha\rs\, p \fCenter 1_\alpha > p$
\UI$\ls\alpha\rs\, p \fCenter 1_\alpha \pra p$
\DisplayProof

\ \ \ \ \  & \ \ \ \ \ 

\AX$\Phi_\alpha \fCenter 1_\alpha$
\AX$p \fCenter \{\alpha\}\, p$
\BI$1_\alpha \rightarrow p \fCenter \Phi_\alpha > \{\alpha\}\, p$
\RightLabel{\scriptsize{$reduce'$}}
\UI$1_\alpha \rightarrow p \fCenter \{\alpha\}\, p$
\UI$1_\alpha\rightarrow p \fCenter \ls\alpha\rs\, p$
\DisplayProof
 \\
\end{tabular}
}}
\end{center}

\noindent
$\rule{126.2mm}{0.5pt}$
$\lc\alpha\rc \top \dashv\vdash 1_\alpha$

\begin{center}
{\scriptsize{
\begin{tabular}{@{}lcr@{}}
\AX$\Phi_\alpha \fCenter 1_\alpha$
\UI$\textrm{I} \fCenter 1_\alpha < \Phi_\alpha$
\LeftLabel{\scriptsize{$nec$}}
\UI$\{\alpha\}\, \textrm{I} \fCenter 1_\alpha < \Phi_\alpha$

\UI$\textrm{I} \fCenter \RESalphaProxy\, ( 1_\alpha < \Phi_\alpha )$
\UI$\top \fCenter \RESalphaProxy\, ( 1_\alpha < \Phi_\alpha)$
\UI$\{\alpha\} \top \fCenter 1_\alpha < \Phi_\alpha$
\UI$\Phi_\alpha\,; \{\alpha\} \top \fCenter 1_\alpha$
\LeftLabel{\scriptsize{$reduce'$}}
\UI$\{\alpha\} \top \fCenter 1_\alpha$
\UI$\lc\alpha\rc \top \fCenter 1_\alpha$
\DisplayProof

 & &

\AX$\textrm{I} \fCenter \top$
\LeftLabel{\scriptsize{$nec$}}
\UI$\RESalphaProxy\, \textrm{I} \fCenter \top$
\UI$\{\alpha\} \RESalphaProxy\, \textrm{I} \fCenter \lc\alpha\rc \top$
\LeftLabel{\scriptsize{$comp$}}
\UI$\Phi_{\alpha}\,; \textrm{I} \fCenter \lc\alpha\rc \top$
\UI$\Phi_{\alpha} \fCenter \lc\alpha\rc \top$
\UI$1_{\alpha} \fCenter \lc\alpha\rc \top$
\DisplayProof
 \\
\end{tabular}
}}
\end{center}

\newpage

\noindent
$\rule{126.2mm}{0.5pt}$
$\ls\alpha\rs \bot \dashv\vdash \neg 1_\alpha$

\begin{center}
{\scriptsize{
\begin{tabular}{@{}lr@{}}
\AX$\bot \fCenter \textrm{I}$
\RightLabel{\scriptsize{$nec$}}
\UI$\bot \fCenter \RESalphaProxy\, \textrm{I}$
\UI$\ls\alpha\rs \bot \fCenter \{\alpha\} \RESalphaProxy\, \textrm{I}$
\RightLabel{\scriptsize{$comp$}}
\UI$\ls\alpha\rs \bot \fCenter \Phi_{\alpha} > \textrm{I}$
\UI$\Phi_{\alpha}\,; \ls\alpha\rs \bot \fCenter \textrm{I}$
\UI$\Phi_{\alpha}\,; \ls\alpha\rs \bot \fCenter \bot$
\UI$\Phi_{\alpha} \fCenter \bot < \ls\alpha\rs \bot$
\UI$1_{\alpha} \fCenter \bot < \ls\alpha\rs \bot$
\UI$1_{\alpha}\,; \ls\alpha\rs \bot \fCenter \bot$
\UI$\ls\alpha\rs \bot \fCenter 1_\alpha > \bot$
\UI$\ls\alpha\rs \bot \fCenter 1_\alpha \pra \bot$
\UI$\ls\alpha\rs \bot \fCenter \neg 1_\alpha$
\DisplayProof

 & 

\AX$\Phi_\alpha \fCenter 1_\alpha$
\AX$\bot \fCenter \textrm{I}$
\RightLabel{\scriptsize{$nec$}}
\UI$\bot \fCenter \{\alpha\} \textrm{I}$
\UI$\RESalphaProxy \bot \fCenter \textrm{I}$
\UI$\RESalphaProxy \bot \fCenter \bot$
\UI$\bot \fCenter \{\alpha\} \bot$
\BI$1_\alpha\rightarrow \bot \fCenter \Phi_\alpha > \{\alpha\} \bot$
\UI$\neg 1_\alpha \fCenter \Phi_\alpha > \{\alpha\} \bot$
\RightLabel{\scriptsize{$reduce'$}}
\UI$\neg 1_\alpha \fCenter \{\alpha\} \bot$
\UI$\neg 1_\alpha \fCenter \ls\alpha\rs \bot$
\DisplayProof
 \\
\end{tabular}
}}
\end{center}

\noindent
$\rule{126.2mm}{0.5pt}$
$\lc\alpha\rc \bot \dashv\vdash \bot$

\begin{center}
{\scriptsize{
\begin{tabular}{@{}lr@{}}
\AX$\bot \fCenter \textrm{I}$
\RightLabel{\scriptsize{$nec$}}
\UI$\bot \fCenter \RESalphaProxy\, \textrm{I}$
\UI$\{\alpha\} \bot \fCenter \textrm{I}$
\UI$\{\alpha\} \bot \fCenter \bot$
\UI$\lc\alpha\rc \bot \fCenter \bot$
\DisplayProof

 & 

\AX$\bot \fCenter \textrm{I}$
\UI$\bot \fCenter \lc\alpha\rc \bot$
\DisplayProof
 \\
\end{tabular}
}}
\end{center}

\noindent
$\rule{126.2mm}{0.5pt}$
$\ls\alpha\rs \top \dashv\vdash \top$

\begin{center}
{\scriptsize{
\begin{tabular}{@{}lr@{}}
\AX$\textrm{I} \fCenter \top$
\UI$\ls\alpha\rs \top \fCenter \top$
\DisplayProof

 & 

\AX$\textrm{I} \fCenter \top$
\LeftLabel{\scriptsize{$nec$}}
\UI$\RESalphaProxy \textrm{I} \fCenter \top$
\UI$\textrm{I} \fCenter  \{\alpha\}\top$
\UI$\top \fCenter \{\alpha\} \top$
\UI$\top \fCenter \ls\alpha\rs \top$
\DisplayProof
 \\
\end{tabular}
}}
\end{center}

\noindent
$\rule{126.2mm}{0.5pt}$
$[\alpha] (A\pand B) \dashv\vdash [\alpha] A \pand [\alpha] B$

\begin{center}
{\scriptsize{
\begin{tabular}{@{}lr@{}}
\AX$A \fCenter A$
\UI$A\,; B \fCenter A$
\UI$A\pand B \fCenter A$
\UI$[\alpha] (A\pand B) \fCenter \{\alpha\} A$
\UI$[\alpha] (A\pand B) \fCenter [\alpha] A$
\AX$B \fCenter B$
\UI$A\,; B \fCenter B$
\UI$A\pand B \fCenter B$
\UI$[\alpha] (A\pand B) \fCenter \{\alpha\} B$
\UI$[\alpha] (A\pand B) \fCenter [\alpha] B$
\BI$[\alpha] (A\pand B)\,; [\alpha] (A\pand B) \fCenter [\alpha] A \pand [\alpha] B$
\UI$[\alpha] (A\pand B) \fCenter [\alpha] A \pand [\alpha] B$
\DisplayProof

 & 

\AX$A \fCenter A$
\UI$[\alpha] A \fCenter \{\alpha\} A$
\UI$\RESalphaProxy [\alpha] A \fCenter A$
\AX$B \fCenter B$
\UI$[\alpha] B \fCenter \{\alpha\} B$
\UI$\RESalphaProxy [\alpha] B \fCenter B$
\BI$\RESalphaProxy [\alpha] A\,; \RESalphaProxy [\alpha] B \fCenter A\pand B$
\LeftLabel{\scriptsize{$mon$}}
\UI$\RESalphaProxy ([\alpha] A\,; [\alpha] B) \fCenter A \pand B$
\UI$[\alpha] A\,; [\alpha] B \fCenter \{\alpha \} (A \pand B)$
\UI$[\alpha] A\,; [\alpha] B \fCenter \ls\alpha\rs (A \pand B)$
\UI$[\alpha] A \pand [\alpha] B \fCenter \ls\alpha\rs (A\pand B)$
\DisplayProof
 \\
\end{tabular}}}
\end{center}

\newpage

\noindent
$\rule{126.2mm}{0.5pt}$
$\lc\alpha\rc (A \pand B) \dashv\vdash \lc\alpha\rc A \pand \lc\alpha\rc B$

\begin{center}
{\scriptsize{
\begin{tabular}{@{}lr@{}}
\AX$A \fCenter A$
\UI$A\,; B \fCenter A$
\UI$A \pand B \fCenter A$
\UI$\{\alpha\} A \pand B \fCenter \lc\alpha\rc A$
\UI$\lc\alpha\rc (A \pand B) \fCenter \lc\alpha\rc A$
\AX$B \fCenter B$
\UI$A\,; B \fCenter B$
\UI$A \pand B \fCenter B$
\UI$\{\alpha\} A \pand B \fCenter \lc\alpha\rc B$
\UI$\lc\alpha\rc (A \pand B) \fCenter \lc\alpha\rc B$
\BI$\lc\alpha\rc (A \pand B)\,; \lc\alpha\rc (A\wedge B) \fCenter \lc\alpha\rc A \wedge \lc\alpha\rc B$
\UI$\lc\alpha\rc (A \pand B) \fCenter \lc\alpha\rc A \wedge \lc\alpha\rc B$
\DisplayProof

 & 

\AX$A \fCenter A$
\LeftLabel{\scriptsize{\emph{balance}}}
\UI$\{\alpha\} A \fCenter \{\alpha\} A$
\UI$\RESalphaProxy \{\alpha\} A \fCenter A$
\AX$B \fCenter B$
\RightLabel{\scriptsize{\emph{balance}}}
\UI$\{\alpha\} B \fCenter \{\alpha\} B$
\UI$\RESalphaProxy \{\alpha\} B \fCenter B$
\BI$\RESalphaProxy \{\alpha\} A\,; \RESalphaProxy \{\alpha\} B \fCenter A \pand B$
\LeftLabel{\scriptsize{\emph{mon}}}
\UI$\RESalphaProxy (\{\alpha\} A\,; \{\alpha\} B) \fCenter A \pand B$
\UI$\{\alpha\}\RESalphaProxy (\{\alpha\} A\,; \{\alpha\} B) \fCenter \lc \alpha\rc (A \pand B)$
\LeftLabel{\scriptsize{\emph{comp}}}
\UI$\Phi_\alpha\,; (\{\alpha\} A\,; \{\alpha\} B) \fCenter \lc\alpha\rc (A \pand B)$
\UI$(\Phi_\alpha\,; \{\alpha\} A)\,; \{\alpha\} B \fCenter \lc\alpha\rc (A \pand B)$
\UI$\Phi_\alpha\,; \{\alpha\} A \fCenter \lc\alpha\rc (A \pand B) < \{\alpha\} B$
\LeftLabel{\scriptsize{$reduce'$}}
\UI$\{\alpha\} A \fCenter \lc\alpha\rc (A \pand B) < \{\alpha\} B$
\UI$\lc\alpha\rc A \fCenter \lc\alpha\rc (A \pand B) < \{\alpha\} B$
\UI$\lc\alpha\rc A\,; \{\alpha\} B \fCenter \lc\alpha\rc (A \pand B)$
\UI$\{\alpha\} B \fCenter \lc\alpha\rc A > \lc\alpha\rc (A \pand B)$
\UI$\lc\alpha\rc B \fCenter \lc\alpha\rc A > \lc\alpha\rc (A \pand B)$
\UI$\lc\alpha\rc A\,; \lc\alpha\rc B \fCenter \lc\alpha\rc (A \pand B)$
\UI$\lc\alpha\rc A \pand \lc\alpha\rc B \fCenter \lc\alpha\rc (A \pand B)$
\DisplayProof
 \\
\end{tabular}
}}
\end{center}

\noindent
$\rule{126.2mm}{0.5pt}$
$\lc\alpha\rc (A \por B) \dashv\vdash \lc\alpha\rc A \por \lc\alpha\rc B$

\begin{center}
{\scriptsize{
\begin{tabular}{@{}lr@{}}
\AX$A \fCenter A$
\UI$\{\alpha\} A \fCenter \lc\alpha\rc A$
\UI$A \fCenter \RESalphaProxy \lc\alpha\rc A$
\AX$B \fCenter B$
\UI$\{\alpha\} B \fCenter\lc\alpha\rc B$
\UI$B \fCenter \RESalphaProxy \lc\alpha\rc B$
\BI$A \por B \fCenter \RESalphaProxy \lc\alpha\rc A\,; \RESalphaProxy \lc\alpha\rc B$
\UI$A \por B \fCenter \RESalphaProxy (\lc\alpha\rc A\,; \lc\alpha\rc B)$
\UI$\{\alpha\} A \por B \fCenter \lc\alpha\rc A\,; \lc\alpha\rc B$
\UI$\lc\alpha\rc (A \por B) \fCenter \lc\alpha\rc A\,; \lc\alpha\rc B$
\UI$\lc\alpha\rc (A \por B) \fCenter \lc\alpha\rc A \por \lc\alpha\rc B$
\DisplayProof

 & 

\AX$A \fCenter A$
\UI$A \fCenter A\,; B$
\UI$A \fCenter A \por B$
\UI$\{\alpha\} A \fCenter \lc\alpha\rc (A \por B)$
\UI$\lc\alpha\rc A \fCenter \lc\alpha\rc (A \por B)$
\AX$B \fCenter B$
\UI$B \fCenter A\,; B$
\UI$B \fCenter A \por B$
\UI$\{\alpha\} B \fCenter \lc\alpha\rc (A \por B)$
\UI$\lc\alpha\rc B \fCenter \lc\alpha\rc (A \por B)$
\BI$\lc\alpha\rc A \por \lc\alpha\rc B \fCenter \lc\alpha\rc (A \por B)\,; \lc\alpha\rc (A \por B)$
\UI$\lc\alpha\rc A \por \lc\alpha\rc B \fCenter \lc\alpha\rc (A \por B)$
\DisplayProof
 \\
\end{tabular}
}}
\end{center}

\noindent
$\rule{126.2mm}{0.5pt}$
$\ls\alpha\rs (A \por B) \dashv\vdash 1_\alpha \pra (\lc\alpha\rc A \por \lc\alpha\rc B)$

\begin{center}
{\scriptsize{
\begin{tabular}{@{}lr@{}}
\AX$A \fCenter A$
\UI$\{\alpha\} A \fCenter \lc\alpha\rc A$
\UI$A \fCenter \RESalphaProxy \lc\alpha\rc A$
\AX$B \fCenter B$
\UI$\{\alpha\} B \fCenter \lc\alpha\rc B$
\UI$B \fCenter \RESalphaProxy \lc\alpha\rc B$
\BI$A\por B \fCenter \RESalphaProxy \lc\alpha\rc A\,; \RESalphaProxy \lc\alpha\rc B$
\UI$A\por B \fCenter \RESalphaProxy (\lc\alpha\rc A\,; \lc\alpha\rc B)$
\UI$[\alpha] (A\por B) \fCenter \{\alpha\} \RESalphaProxy (\lc\alpha\rc A\por \lc\alpha\rc B)$
\RightLabel{\scriptsize{$comp$}}
\UI$[\alpha] (A\por B) \fCenter \Phi_\alpha > (\lc\alpha\rc A\por \lc\alpha\rc B)$
\UI$\Phi_\alpha\,; \ls\alpha\rs (A\por B) \fCenter \lc\alpha\rc A\por \lc\alpha\rc B$
\UI$\Phi_\alpha \fCenter \lc\alpha\rc A\por \lc\alpha\rc B < \ls\alpha\rs (A\por B)$
\UI$1_\alpha\, \fCenter  \lc\alpha\rc A\por \lc\alpha\rc B < \ls\alpha\rs (A\por B)$
\UI$1_\alpha\,; \ls\alpha\rs (A\por B) \fCenter \lc\alpha\rc A\por \lc\alpha\rc B$
\UI$\ls\alpha\rs (A\por B) \fCenter 1_\alpha > (\lc\alpha\rc A\por \lc\alpha\rc B)$
\UI$\ls\alpha\rs (A\por B) \fCenter 1_\alpha \rightarrow (\lc\alpha\rc A\por \lc\alpha\rc B)$
\DisplayProof

 &
\!\!\!\!
\AX$\Phi_\alpha \fCenter 1_\alpha$
\AX$A \fCenter A$
\UI$\{\alpha\} A \fCenter \{\alpha\} A$
\UI$\lc\alpha\rc A \fCenter \{\alpha\} A$
\AX$B \fCenter B$
\UI$\{\alpha\} B \fCenter \{\alpha\} B$
\UI$\lc\alpha\rc B \fCenter \{\alpha\} B$
\BI$\lc\alpha\rc A \por \lc\alpha\rc B \fCenter \{\alpha\} A\,; \{\alpha\} B$
\UI$\lc\alpha\rc A \por \lc\alpha\rc B \fCenter \{\alpha\} (A\,; B)$
\UI$\RESalphaProxy (\lc\alpha\rc A \por \lc\alpha\rc B) \fCenter A\,; B$
\UI$\RESalphaProxy (\lc\alpha\rc A \por \lc\alpha\rc B) \fCenter A \por B$
\UI$\lc\alpha\rc A \por \lc\alpha\rc B \fCenter \{\alpha\} (A \por B)$
\BI$1_\alpha \pra (\lc\alpha\rc A \por \lc\alpha\rc B) \fCenter \Phi_\alpha > \{\alpha\} (A \por B)$
\RightLabel{\scriptsize{$reduce'$}}
\UI$1_\alpha \pra (\lc\alpha\rc A \por \lc\alpha\rc B) \fCenter \{\alpha\} (A \por B)$
\UI$1_\alpha \pra (\lc\alpha\rc A \por \lc\alpha\rc B) \fCenter \ls\alpha\rs (A \por B)$
\DisplayProof
 \\
\end{tabular}
}}
\end{center}

\noindent
$\rule{126.2mm}{0.5pt}$
$\lc\alpha\rc (A \pra B) \dashv\vdash 1_\alpha \pand (\lc\alpha\rc A \pra \lc\alpha\rc B)$

\begin{center}
{\scriptsize{
\begin{tabular}{@{}lr@{}}
\!\!\!\!\!\!\!\!\!\!\!\!\!\!\!
\AX$\Phi_\alpha \fCenter 1_\alpha$
\AX$A \fCenter A$
\UI$\{\alpha\} A \fCenter \{\alpha\} A$
\UI$\lc\alpha\rc A \fCenter \{\alpha\} A$
\UI$\RESalphaProxy \lc\alpha\rc A \fCenter A$
\AX$B \fCenter B$
\UI$\{\alpha\} B \fCenter \lc\alpha\rc B$
\UI$B \fCenter \RESalphaProxy \lc\alpha\rc B$
\BI$A \pra B \fCenter \RESalphaProxy \lc\alpha\rc A > \RESalphaProxy \lc\alpha\rc B$
\UI$A \pra B \fCenter \RESalphaProxy (\lc\alpha\rc A > \lc\alpha\rc B)$
\UI$\{\alpha\} (A \pra B) \fCenter \lc\alpha\rc A > \lc\alpha\rc B$
\UI$\{\alpha\} (A \pra B) \fCenter \lc\alpha\rc A \pra \lc\alpha\rc B$
\BI$\Phi_\alpha\,; \{\alpha\} (A \pra B) \fCenter 1_\alpha \pand (\lc\alpha\rc A \pra \lc\alpha\rc B)$
\LeftLabel{\scriptsize{$reduce'$}}
\UI$\{\alpha\} (A \pra B )\fCenter 1_\alpha \pand (\lc\alpha\rc A \pra \lc\alpha\rc B)$
\UI$\lc\alpha\rc (A \pra B) \fCenter 1_\alpha \pand (\lc\alpha\rc A \pra \lc\alpha\rc B)$
\DisplayProof

 &

\!\!\!\!\!\!\!\!\!\!\!\!\!\!\!\!\!\!\!\!\!\!\!
\AX$A \fCenter A$
\UI$\{\alpha\} A \fCenter \lc\alpha\rc A$
\AX$B \fCenter B$
\UI$\{\alpha\} B \fCenter \{\alpha\} B$
\UI$\lc\alpha\rc B \fCenter \{\alpha\} B$
\BI$\lc\alpha\rc A \pra \lc\alpha\rc B \fCenter \{\alpha\} A > \{\alpha\} B$
\UI$\lc\alpha\rc A \pra \lc\alpha\rc B \fCenter \{\alpha\} (A > B)$
\UI$\RESalphaProxy ( \lc\alpha\rc A \pra \lc\alpha\rc B )\fCenter A > B$
\UI$\RESalphaProxy (\lc\alpha\rc A \pra \lc\alpha\rc B )\fCenter A \pra B$
\UI$\{\alpha\} \RESalphaProxy (\lc\alpha\rc A \pra \lc\alpha\rc B) \fCenter \lc \alpha \rc (A \pra B)$
\LeftLabel{\scriptsize{$comp_L^\alpha$}}
\UI$\Phi_\alpha\,;\ ( \lc\alpha\rc A \pra \lc\alpha\rc B ) \fCenter \lc \alpha \rc(A \pra B)$
\UI$ (\lc\alpha\rc A \pra \lc\alpha\rc B )\ ;\ \Phi_\alpha \fCenter \lc \alpha \rc(A \pra B)$
\UIC{$ \Phi_\alpha \fCenter ( \lc\alpha\rc A \pra \lc\alpha\rc B ) > \lc \alpha \rc(A \pra B)$}
\UIC{$ 1_\alpha \fCenter ( \lc\alpha\rc A \pra \lc\alpha\rc B)  > \lc \alpha \rc (A \pra B)$}
\UI$( \lc\alpha\rc A \pra \lc\alpha\rc B )  \ ;\  1_\alpha \fCenter \lc \alpha \rc (A \pra B)$
\UI$1_\alpha\,; (\lc\alpha\rc A\rightarrow \lc\alpha\rc B )\fCenter \lc\alpha\rc (A \pra B)$
\UI$1_\alpha \pand (\lc\alpha\rc A \pra \lc\alpha\rc B) \fCenter \lc\alpha\rc (A \pra B)$
\DisplayProof
 \\
\end{tabular}
}}
\end{center}

\noindent
$\rule{126.2mm}{0.5pt}$
$\ls\alpha\rs (A \pra B) \dashv\vdash \lc\alpha\rc A \pra \lc\alpha\rc B$

\begin{center}
{\scriptsize{
\begin{tabular}{@{}lr@{}}
\AX$A \fCenter A$
\UI$\{\alpha\} A \fCenter \{\alpha\} A$
\UI$\RESalphaProxy \{\alpha\} A \fCenter A$
\AX$B \fCenter B$
\UI$\{\alpha\} B \fCenter \lc\alpha\rc B$
\UI$B \fCenter \RESalphaProxy \lc\alpha\rc B$
\BI$A \pra B \fCenter \RESalphaProxy \{\alpha\} A > \RESalphaProxy \lc\alpha\rc B$
\UI$A \pra B \fCenter \RESalphaProxy (\{\alpha\} A > \lc\alpha\rc B)$
\UI$[\alpha] (A \pra B) \fCenter \{\alpha\} \RESalphaProxy (\{\alpha\} A > \lc\alpha\rc B)$
\RightLabel{$comp$}
\UI$[\alpha] (A \pra B) \fCenter \Phi_{\alpha} >( \{\alpha\} A > \lc\alpha\rc B)$
\UI$\Phi_{\alpha}\,; [\alpha] (A \pra B) \fCenter \{\alpha\} A > \lc\alpha\rc B$
\UI$\{\alpha\} A\,; (\Phi_{\alpha}\,; [\alpha] (A \pra B) ) \fCenter \lc\alpha\rc B$
\UI$(\{\alpha\} A\,; \Phi_{\alpha})\,; [\alpha] (A \pra B) \fCenter \lc\alpha\rc B$
\UI$[\alpha] (A \pra B)\,; (\{\alpha\} A\,; \Phi_{\alpha}) \fCenter \lc\alpha\rc B$
\UI$\{\alpha\} A\,; \Phi_\alpha\, \fCenter \ls\alpha\rs (A \pra B) > \lc\alpha\rc B$
\UI$\Phi_\alpha\,; \{\alpha\} A \fCenter \ls\alpha\rs (A \pra B) > \lc\alpha\rc B$
\RightLabel{\scriptsize{\emph{reduce$'$}}}
\UI$\{\alpha\} A \fCenter \ls\alpha\rs (A \pra B) > \lc\alpha\rc B$
\UI$\lc\alpha\rc A \fCenter \ls\alpha\rs (A \pra B) > \lc\alpha\rc B$
\UI$\ls\alpha\rs (A \pra B)\,; \lc\alpha\rc A \fCenter \lc\alpha\rc B$
\UI$\lc\alpha\rc A\,; \ls\alpha\rs (A \pra B) \fCenter \lc\alpha\rc B$
\UI$\ls\alpha\rs (A \pra B) \fCenter \lc\alpha\rc A > \lc\alpha\rc B$
\UI$\ls\alpha\rs (A \pra B) \fCenter \lc\alpha\rc A \pra \lc\alpha\rc B$
\DisplayProof

 &
\!\!\!\!\!\!\!\!\!\!
\AX$A \fCenter A$
\UI$\{\alpha\} A \fCenter \lc\alpha\rc A$
\AX$B \fCenter B$
\UI$\{\alpha\} B \fCenter \{\alpha\} B$
\UI$\lc\alpha\rc B \fCenter \{\alpha\} B$
\BI$\lc\alpha\rc A\rightarrow \lc\alpha\rc B \fCenter \{\alpha\} A > \{\alpha\} B$
\UI$\lc\alpha\rc A\rightarrow \lc\alpha\rc B \fCenter \{\alpha\} (A > B)$
\UI$\RESalphaProxy (\lc\alpha\rc A\rightarrow \lc\alpha\rc B) \fCenter A > B$
\UI$\RESalphaProxy (\lc\alpha\rc A\rightarrow \lc\alpha\rc B) \fCenter A \rightarrow B$
\UI$\lc\alpha\rc A\rightarrow \lc\alpha\rc B \fCenter \{\alpha\} (A \rightarrow B)$
\UI$\lc\alpha\rc A\rightarrow \lc\alpha\rc B \fCenter \ls\alpha\rs (A \rightarrow B)$
\DisplayProof
 \\
\end{tabular}
}}
\end{center}

\newpage
For ease of notation, in the following derivations we assume the actions $\beta$, such that $\alpha\aga\beta$ form the set $\{\beta_i\,|\,1\leq i \leq n\}$.

\noindent
$\rule{126.2mm}{0.5pt}$
$\lc\alpha\rc \lc \aga \rc A \vdash 1_\alpha \pand \bigvee \{\lc\aga\rc \lc\beta\rc A\,|\,\alpha\aga\beta\}$

\begin{center}
{\scriptsize{
\AX$\Phi_\alpha \fCenter 1_\alpha$

\AX$A \fCenter A$
\UI$\{\beta_1\} A \fCenter \lc\beta_1\rc A$

\UI$ \{\aga\} \{\beta_1\} A \fCenter \lc \aga\rc \lc\beta_1\rc A$


\def\fCenter{\mbox{$\cdots$}}
\AX$\phantom{\{_k} \fCenter \phantom{\{_k}$
\noLine
\UI$\phantom{\{_k} \fCenter \phantom{\{_k}$
\noLine
\UI$\phantom{\{_k} \fCenter \phantom{\{_k}$


\def\fCenter{\mbox{$\ \vdash\ $}}
\AX$A \fCenter A$
\UI$\{\beta_n\} A \fCenter \lc\beta_n\rc A$

\UI$ \{\aga\} \{\beta_n\} A \fCenter \lc \aga\rc \lc\beta_n\rc A$


\LeftLabel{\scriptsize{\emph{swap-out}$'$}}
\TI$\, {\{\alpha\}\{\aga\} A} \fCenter \Bigsemic\Big(\lc\aga\rc \lc\beta_i\rc A\Big)$
\dashedLine
\UI$ {\{\alpha\}\{\aga\} A} \fCenter \bigvee\Big(\lc\aga\rc \lc\beta_i\rc A\Big)$
\BI$\Phi_\alpha\,; {\{\alpha\}\{\aga\} A} \fCenter 1_\alpha \pand \bigvee\Big(\lc\aga\rc \lc\beta_i\rc A\Big)$
\LeftLabel{\scriptsize{\emph{reduce$'$}}}
\UI${\{\alpha\} \{\aga\} A} \fCenter 1_\alpha\pand \bigvee\Big(\lc\aga\rc \lc\beta_i\rc A\Big)$
\UI$\{ \aga \} A \fCenter \RESalphaProxy 1_\alpha\pand \bigvee\Big(\lc\aga\rc \lc\beta_i\rc A\Big)$
\UI$\lc \aga \rc A \fCenter \RESalphaProxy 1_\alpha\pand \bigvee\Big(\lc\aga\rc \lc\beta_i\rc A\Big)$
\UI$\{\alpha\} \lc \aga \rc A \fCenter 1_\alpha\pand \bigvee\Big(\lc \aga \rc \lc\beta_i\rc A\Big)$
\UI$\lc\alpha\rc \lc \aga \rc A \fCenter 1_\alpha\pand \bigvee\Big(\lc \aga \rc \lc\beta_i\rc A\Big)$
\DisplayProof
}}
\end{center}

\noindent
$\rule{126.2mm}{0.5pt}$
$1_\alpha \pand \bigvee \{\lc\aga\rc \lc\beta\rc A\,|\,\alpha\aga\beta\} \vdash \lc\alpha\rc \lc \aga \rc A$

\begin{center}
{\scriptsize
\AXC{$A \fCenter A$}
\UIC{$\{ \aga \} A \fCenter \lc\aga\rc A$}
\UIC{${\{\alpha\}\{ \aga \} A} \fCenter \lc\alpha\rc \lc \aga\rc A$}
\LeftLabel{\scriptsize{\emph{swap-in}$'$}}
\UIC{$\Phi_\alpha\,; \{\aga\}\{\beta_1\} A \fCenter \lc\alpha\rc \lc \aga\rc A$}
\UIC{$\{\aga\}\{\beta_1\} A\,; \Phi_\alpha\, \fCenter \lc\alpha\rc \lc \aga\rc A$}
\UIC{$\Phi_\alpha\,\fCenter \{\aga\}\{\beta_1\} A > \lc\alpha\rc \lc \aga\rc A$}
\UIC{$1_\alpha\,\fCenter \{\aga\}\{\beta_1\} A > \lc\alpha\rc \lc \aga\rc A$}
\UIC{$\{ \aga \}\{\beta_1\} A\,;1_\alpha \fCenter \lc\alpha\rc \lc\aga\rc A$}
\UIC{$ 1_\alpha\,;\{\aga\}\{\beta_1\} A \fCenter \lc\alpha\rc \lc\aga\rc A$}
\UIC{$\{\aga\}\{\beta_1\} A \fCenter 1_\alpha > \lc\alpha\rc \lc \aga\rc A$}
\UIC{$\{\beta_1\} A \fCenter \RESagaProxy (1_\alpha > \lc\alpha\rc \lc \aga \rc A)$}
\UIC{$\lc\beta_1\rc A \fCenter \RESagaProxy (1_\alpha > \lc\alpha\rc \lc \aga \rc A)$}
\UIC{$\{\aga\}\lc\beta_1\rc A \fCenter 1_\alpha > \lc\alpha\rc \lc \aga \rc A$}
\UIC{$\lc\aga\rc\lc\beta_1\rc A \fCenter 1_\alpha > \lc\alpha\rc \lc \aga \rc A$}


\def\fCenter{\mbox{$\cdots$}}
\AX$\phantom{\{_k} \fCenter \phantom{\{_k}$
\noLine
\UI$\phantom{\{_k} \fCenter \phantom{\{_k}$
\noLine
\UI$\phantom{\{_k} \fCenter \phantom{\{_k}$
\noLine
\UI$\phantom{\{_k} \fCenter \phantom{\{_k}$
\noLine
\UI$\phantom{\{_k} \fCenter \phantom{\{_k}$
\noLine
\UI$\phantom{\{_k} \fCenter \phantom{\{_k}$
\noLine
\UI$\phantom{\{_k} \fCenter \phantom{\{_k}$
\noLine
\UI$\phantom{\{_k} \fCenter \phantom{\{_k}$
\noLine
\UI$\phantom{\{_k} \fCenter \phantom{\{_k}$
\noLine
\UI$\phantom{\{_k} \fCenter \phantom{\{_k}$
\noLine
\UI$\phantom{\{_k} \fCenter \phantom{\{_k}$
\noLine
\UI$\phantom{\{_k} \fCenter \phantom{\{_k}$
\noLine
\UI$\phantom{\{_k} \fCenter \phantom{\{_k}$
\noLine
\UI$\phantom{\{_k} \fCenter \phantom{\{_k}$

\def\fCenter{\mbox{$\ \vdash\ $}}
\AXC{$A \fCenter A$}
\UIC{$\{ \aga \} A \fCenter \lc \aga \rc A$}
\UIC{${\{\alpha\}\{\aga\} A} \fCenter \lc\alpha\rc \lc\aga\rc A$}
\LeftLabel{\scriptsize{\emph{swap-in}$'$}}
\UIC{$\Phi_\alpha\,; \{\aga\}\{\beta_n\} A \fCenter \lc\alpha\rc \lc\aga\rc A$}
\UIC{$\{\aga\}\{\beta_n\} A\,; \Phi_\alpha \fCenter \lc\alpha\rc \lc\aga\rc A$}
\UIC{$\Phi_\alpha \fCenter \{\aga\}\{\beta_n\} A > \lc\alpha\rc \lc\aga\rc A$}
\UIC{$1_\alpha \fCenter \{\aga\}\{\beta_n\} A > \lc\alpha\rc \lc\aga\rc A$}
\UIC{$\{ \aga \}\{\beta_n\} A\,; 1_\alpha \fCenter \lc\alpha\rc \lc\aga\rc A$}
\UIC{$1_\alpha\,; \{\aga\}\{\beta_n\} A \fCenter \lc\alpha\rc \lc \aga \rc A$}
\UIC{$\{\aga\}\{\beta_n\} A \fCenter 1_\alpha > \lc\alpha\rc \lc \aga \rc A$}
\UIC{$\{\beta_n\} A \fCenter \RESagaProxy (1_\alpha > \lc\alpha\rc \lc\aga\rc A)$}
\UIC{$\lc\beta_n\rc A \fCenter \RESagaProxy (1_\alpha > \lc\alpha\rc \lc \aga \rc A)$}
\UIC{$\{\aga\} \lc\beta_n\rc A \fCenter 1_\alpha > \lc\alpha\rc \lc \aga \rc A$}
\UIC{$\lc\aga\rc \lc\beta_n\rc A \fCenter 1_\alpha > \lc\alpha\rc \lc \aga \rc A$}


\dashedLine
\TI$\bigvee\Big(\lc\aga\rc \lc\beta_i\rc A\Big) \fCenter \Bigsemic\Big(1_\alpha > \lc\alpha\rc \lc \aga \rc A\Big)$
\dashedLine
\UI$\bigvee\Big(\lc\aga\rc \lc\beta_i\rc A\Big) \fCenter 1_\alpha > \lc\alpha\rc \lc \aga \rc A)$
\UI$1_\alpha\,; \bigvee\Big(\lc\aga\rc \lc\beta_i\rc A\Big) \fCenter \lc\alpha\rc \lc \aga \rc A$
\UI$1_\alpha\pand \bigvee\Big(\lc\aga\rc \lc\beta_i\rc A\Big) \fCenter \lc\alpha\rc \lc \aga \rc A$
\DisplayProof
}
\end{center}


\newpage

\noindent
$\rule{126.2mm}{0.5pt}$
$\ls\alpha\rs\lc \aga \rc A \vdash Pre(\alpha)\rightarrow \bigvee \{\lc\aga\rc \lc\beta\rc A \,|\, \alpha\aga\beta\}$ \\

\begin{center}
{\scriptsize{
\begin{tabular}{c}

\AX$A \fCenter A$
\UI$\{\beta_1\} A \fCenter \lc\beta_1\rc A$

\UI$ \{\aga\} \{\beta_1\} A \fCenter \lc\aga\rc \lc\beta_1\rc A$


\def\fCenter{\mbox{$\cdots$}}
\AX$\phantom{\{_k} \fCenter \phantom{\{_k}$

\noLine
\UI$\phantom{\{_k} \fCenter \phantom{\{_k}$
\noLine
\UI$\phantom{\{_k} \fCenter \phantom{\{_k}$


\def\fCenter{\mbox{$\ \vdash\ $}}
\AX$A \fCenter A$
\UI$\{\beta_n\} A \fCenter \lc\beta_n\rc A$

\UI$\{\aga\} \{\beta_n\} A \fCenter \lc\aga\rc \lc\beta_n\rc A$


\LeftLabel{\scriptsize{\emph{swap-out}$'$}}
\TI$ {\{\alpha\}\{ \aga \} A} \fCenter \Bigsemic \Big(\lc\aga\rc \lc\beta_i\rc A\Big)$
\dashedLine
\UI$ {\{\alpha\}\{ \aga \} A} \fCenter \bigvee \Big(\lc\aga\rc \lc\beta_i\rc A\Big)$
\UI$\{\aga\} A \fCenter \RESalphaProxy \bigvee \Big(\lc\aga\rc \lc\beta_i\rc A\Big)$
\UI$\lc\aga\rc A \fCenter \RESalphaProxy \bigvee \Big(\lc \aga \rc \lc\beta_i\rc A\Big)$
\UI$[\alpha] \lc\aga\rc A \fCenter \{\alpha\} \RESalphaProxy \bigvee \Big(\lc\aga\rc \lc\beta_i\rc A\Big)$
\RightLabel{\scriptsize{\emph{comp}}}
\UI$[\alpha] \lc\aga\rc A \fCenter \Phi_\alpha> \bigvee \Big (\lc\aga\rc \lc\beta_i\rc A\Big)$
\UI$\Phi_\alpha\,; [\alpha] \lc \aga \rc A \fCenter \bigvee \Big (\lc\aga\rc \lc\beta_i\rc A\Big)$
\UI$\Phi_\alpha \fCenter \bigvee \Big (\lc\aga\rc \lc\beta_i\rc A\Big) < [\alpha] \lc \aga \rc A$
\UI$1_\alpha \fCenter \bigvee \Big (\lc\aga\rc \lc\beta_i\rc A\Big) < [\alpha] \lc \aga \rc A$
\UI$1_\alpha\,; [\alpha] \lc \aga \rc A\fCenter \bigvee \Big (\lc\aga\rc \lc\beta_i\rc A\Big)$
\UI$\ls\alpha\rs \lc\aga\rc A \fCenter 1_\alpha > \bigvee \Big (\lc\aga\rc \lc\beta_i\rc A\Big)$
\UI$\ls\alpha\rs \lc\aga\rc A \fCenter 1_\alpha \rightarrow \bigvee \Big (\lc\aga\rc \lc\beta_i\rc A\Big)$
\DisplayProof
\end{tabular}
}}
\end{center}

\noindent
$\rule{126.2mm}{0.5pt}$
$Pre(\alpha)\rightarrow \bigvee \{\lc\aga\rc \lc\beta_i\rc A\,|\,\alpha\aga\beta\} \vdash \ls\alpha\rs \lc\aga\rc A$

\begin{center}
{\scriptsize{
\begin{tabular}{c}
\AX$\Phi_\alpha \fCenter 1_\alpha$

\AXC{$A \fCenter A$}
\UIC{$\{ \aga \} A \fCenter \lc\aga\rc A$}
\UIC{${\{\alpha\}\{ \aga \} A} \fCenter \{\alpha\} \lc\aga\rc A$}
\LeftLabel{\scriptsize{\emph{swap-in$'$}}}
\UIC{$\Phi_\alpha\,; \{\aga\} \{\beta_1\} A \fCenter \{\alpha\} \lc\aga\rc A$}
\UIC{$ \{ \aga \} \{\beta_1\} A \fCenter \Phi_\alpha\, > \{\alpha\} \lc\aga\rc A$}
\LeftLabel{\scriptsize{\emph{reduce$'$}}}
\UIC{$\{\aga\} \{\beta_1\} A \fCenter \{\alpha\} \lc\aga\rc A$}
\UIC{$\{\beta_1\} A \fCenter \RESagaProxy \{\alpha\} \lc\aga\rc A$}
\UIC{$\lc\beta_1\rc A \fCenter \RESagaProxy \{\alpha\} \lc\aga\rc A$}
\UIC{$\{\aga\} \lc\beta_1\rc A \fCenter \{\alpha\} \lc\aga\rc A$}
\UIC{$\lc\aga\rc \lc\beta_1\rc A \fCenter \{\alpha\} \lc\aga\rc A$}


\def\fCenter{\mbox{$\cdots$}}
\AXC{$\phantom{\{_k} \fCenter \phantom{\{_k}$}
\noLine
\UIC{$\phantom{\{_k} \fCenter \phantom{\{_k}$}
\noLine
\UIC{$\phantom{\{_k} \fCenter \phantom{\{_k}$}
\noLine
\UIC{$\phantom{\{_k} \fCenter \phantom{\{_k}$}
\noLine
\UIC{$\phantom{\{_k} \fCenter \phantom{\{_k}$}
\noLine
\UIC{$\phantom{\{_k} \fCenter \phantom{\{_k}$}
\noLine
\UIC{$\phantom{\{_k} \fCenter \phantom{\{_k}$}
\noLine
\UIC{$\phantom{\{_k} \fCenter \phantom{\{_k}$}
\noLine
\UIC{$\phantom{\{_k} \fCenter \phantom{\{_k}$}
\noLine
\UIC{$\phantom{\{_k} \fCenter \phantom{\{_k}$}


\def\fCenter{\mbox{$\ \vdash\ $}}
\AXC{$A \fCenter A$}
\UIC{$\{ \aga \} A \fCenter \lc \aga \rc A$}

\UIC{${\{\alpha\}\{\aga\} A} \fCenter \{\alpha\} \lc\aga\rc A$}
\LeftLabel{\scriptsize{\emph{swap-in$'$}}}
\UIC{$\Phi_\alpha \,; \{\aga\} \{\beta_n\} A \fCenter \{\alpha\} \lc\aga\rc A$}
\UIC{$\{\aga\} \{\beta_n\} A \fCenter \Phi_\alpha \, > \{\alpha\} \lc\aga\rc A$}
\LeftLabel{\scriptsize{\emph{reduce$'$}}}
\UIC{$\{\aga\} \{\beta_n\} A \fCenter \{\alpha\} \lc\aga\rc A$}
\UIC{$\{\beta_n\} A \fCenter \RESagaProxy \{\alpha\} \lc\aga\rc A$}
\UIC{$\lc\beta_n\rc A \fCenter \RESagaProxy \{\alpha\} \lc\aga\rc A$}
\UIC{$\{\aga\} \lc\beta_n\rc A \fCenter \{\alpha\} \lc\aga\rc A$}
\UIC{$\lc\aga\rc \lc\beta_n\rc A \fCenter \{\alpha\} \lc\aga\rc A$}


\dashedLine
\TI$\bigvee\Big(\lc\aga\rc \lc\beta_i\rc A\Big) \fCenter \Bigsemic\Big(\{\alpha\} \lc \aga \rc A\Big)$
\dashedLine
\UI$\bigvee\Big(\lc \aga \rc \lc\beta_i\rc A\Big) \fCenter  \{\alpha\} \lc \aga \rc A$

\BI$1_\alpha \rightarrow \bigvee \Big(\lc\aga\rc \lc\beta_i\rc A\Big) \fCenter \Phi_\alpha > \{\alpha\} \lc \aga \rc A$

\RightLabel{\scriptsize{\emph{reduce$'$}}}
\UI$1_\alpha \rightarrow \bigvee \Big (\lc\aga\rc \lc\beta_i\rc A\Big) \fCenter \{\alpha\} \lc \aga \rc A$

\UI$1_\alpha \rightarrow \bigvee \Big(\lc\aga\rc \lc\beta_i\rc A\Big) \fCenter \ls\alpha\rs \lc \aga \rc A$
\DisplayProof
 \\

\end{tabular}
}}
\end{center}


\newpage

\noindent
$\rule{126.2mm}{0.5pt}$
$[\alpha][\aga] A \vdash Pre(\alpha)\rightarrow \bigwedge \{[\aga][\beta] A\,|\,\alpha\aga\beta\}$

\begin{center}
{\scriptsize{
\AXC{$A \fCenter A$}
\UIC{$[\aga] A \fCenter \{\aga\} A$}
\UIC{$[\alpha][\aga] A \fCenter {\{\alpha\} \{\aga\} A}$}
\RightLabel{\scriptsize{$swap\textrm{-}in'$}}
\UIC{$[\alpha][\aga] A \fCenter \Phi_\alpha > \{ \aga \}\{\beta_1\} A$}
\UIC{$\Phi_\alpha\,; [\alpha][\aga] A \fCenter \{ \aga \}\{\beta_1\} A$}
\UIC{$\RESagaProxy \Phi_\alpha\,; [\alpha][\aga] A) \fCenter \{\beta_1\} A$}
\UIC{$\RESagaProxy \Phi_\alpha\,; [\alpha][\aga] A) \fCenter [\beta_1] A$}
\UIC{$\Phi_\alpha\,; [\alpha][\aga] A \fCenter \{\aga\}[\beta_1] A$}
\UIC{$\Phi_\alpha\,; [\alpha][\aga] A \fCenter [\aga][\beta_1] A$}


\def\fCenter{\mbox{$\cdots$}}
\AX$\phantom{\{_k} \fCenter \phantom{\{_k}$
\noLine
\UI$\phantom{\{_k} \fCenter \phantom{\{_k}$
\noLine
\UI$\phantom{\{_k} \fCenter \phantom{\{_k}$
\noLine
\UI$\phantom{\{_k} \fCenter \phantom{\{_k}$
\noLine
\UI$\phantom{\{_k} \fCenter \phantom{\{_k}$
\noLine
\UI$\phantom{\{_k} \fCenter \phantom{\{_k}$
\noLine
\UI$\phantom{\{_k} \fCenter \phantom{\{_k}$
\noLine
\UI$\phantom{\{_k} \fCenter \phantom{\{_k}$
\noLine
\UI$\phantom{\{_k} \fCenter \phantom{\{_k}$


\def\fCenter{\mbox{$\ \vdash\ $}}
\AX$A \fCenter A$
\UI$[\aga] A \fCenter \{ \aga \} A$
\UI$[\alpha][\aga] A \fCenter {\{\alpha\}\{ \aga \} A}$

\RightLabel{\scriptsize{$swap\textrm{-}in'$}}
\UI$[\alpha][\aga] A \fCenter \Phi_\alpha > \{\aga\}\{\beta_n\} A$
\UI$\Phi_\alpha\,; [\alpha][\aga] A \fCenter \{\aga\}\{\beta_n\} A$
\UI$\RESagaProxy (\Phi_\alpha\,; [\alpha][\aga] A) \fCenter \{\beta_n\} A$
\UI$\RESagaProxy (\Phi_\alpha\,; [\alpha][\aga] A) \fCenter [\beta_n] A$
\UI$\Phi_\alpha\,; [\alpha][\aga] A \fCenter \{\aga\}[\beta_n] A$
\UI$\Phi_\alpha\,; [\alpha][\aga] A \fCenter [\aga][\beta_n] A$


\dashedLine
\TI$\Bigsemic\Big(\Phi_\alpha\,; [\alpha][\aga] A\Big) \fCenter \bigwedge\Big([\aga][\beta_i] A\Big)$
\dashedLine
\UI$\Phi_\alpha\,; [\alpha][\aga] A \fCenter \bigwedge\Big([\aga][\beta_i] A\Big)$
\UI$ [\alpha][\aga] A\,; \Phi_\alpha \fCenter \bigwedge\Big([\aga][\beta_i] A\Big)$
\UI$\Phi_\alpha \fCenter [\alpha][\aga] A>\bigwedge\Big([\aga][\beta_i] A\Big)$
\UI$1_\alpha \fCenter [\alpha][\aga] A>\bigwedge\Big([\aga][\beta_i] A\Big)$
\UI$ [\alpha][\aga] A\,; 1_\alpha \fCenter \bigwedge\Big([\aga][\beta_i] A\Big)$
\UI$ 1_\alpha\,; [\alpha][\aga] A \fCenter \bigwedge\Big([\aga][\beta_i] A\Big)$
\UI$[\alpha][\aga] A \fCenter 1_\alpha\ > \bigwedge\Big([\aga][\beta_i] A\Big)$
\UI$[\alpha][\aga] A \fCenter 1_\alpha \rightarrow \bigwedge\Big([\aga][\beta_i] A\Big)$
\DisplayProof
}}
\end{center}

\noindent
$\rule{126.2mm}{0.5pt}$
$Pre(\alpha)\rightarrow \bigwedge \{[\aga][\beta] A\,|\, \alpha\aga\beta\} \vdash [\alpha][\aga] A$

\begin{center}
{\scriptsize{
\begin{tabular}{c}
\AX$\Phi_\alpha \fCenter 1_\alpha$

\AX$A \fCenter A$
\UI$[\beta_1] A \fCenter \{\beta_1\} A$
\UI$[\aga][\beta_1] A \fCenter \{\aga\}\{\beta_1\} A$


\def\fCenter{\mbox{$\cdots$}}
\AXC{$\phantom{\{_k} \fCenter \phantom{\{_k}$}
\noLine
\UIC{$\phantom{\{_k} \fCenter \phantom{\{_k}$}
\noLine
\UIC{$\phantom{\{_k} \fCenter \phantom{\{_k}$}


\def\fCenter{\mbox{$\ \vdash\ $}}
\AXC{$A \fCenter A$}
\UI$[\beta_n] A \fCenter \{\beta_n\} A$
\UI$[\aga][\beta_n] A \fCenter \{\aga\} \{\beta_n\} A$


\RightLabel{\scriptsize{$swap\textrm{-}out'$}}
\TI$\Bigsemic\Big([\aga][\beta_i] A\Big) \fCenter \{\alpha\} \{\aga\} A$
\dashedLine
\UI$\bigwedge\Big([\aga][\beta_i] A\Big) \fCenter \{\alpha\} \{\aga\} A$

\BI$1_\alpha\rightarrow \bigwedge \Big([\aga][\beta_i] A\Big) \fCenter \Phi_\alpha > {\{\alpha\} \{\aga\} A}$
\RightLabel{\scriptsize{$reduce'$}}
\UI$ 1_\alpha\rightarrow \bigwedge\Big([\aga][\beta_i] A\Big) \fCenter {\{\alpha\} \{\aga\} A}$
\UI$\RESalphaProxy ( 1_\alpha \rightarrow \bigwedge\Big([\aga][\beta_i] A\Big)) \fCenter \{\aga\} A$
\UI$\RESalphaProxy (1_\alpha \rightarrow \bigwedge\Big([\aga][\beta_i] A\Big)) \fCenter [\aga] A$
\UI$ 1_\alpha \rightarrow \bigwedge\Big([\aga][\beta_i] A\Big) \fCenter \{\alpha\}[\aga] A$
\UI$1_\alpha\rightarrow \bigwedge\Big([\aga][\beta_i] A\Big) \fCenter [\alpha][\aga] A$
\DisplayProof
 \\
\end{tabular}
}}
\end{center}


\newpage

\noindent
$\rule{126.2mm}{0.5pt}$
$\lc\alpha\rc [\aga] A \vdash Pre(\alpha) \pand \bigwedge \{[\aga] \ls\beta\rs A\,|\, \alpha\aga\beta\}$

\begin{center}
{\scriptsize{
\begin{tabular}{@{}c@{}}
\!\!\!\!\!\!\!
\AX$\Phi_\alpha \fCenter 1_\alpha$


\AX$A \fCenter A$
\UIC{$[\aga] A \fCenter \{ \aga \} A$}
\LeftLabel{\scriptsize{$balance$}}
\UIC{$\{\alpha\} [\aga] A \fCenter {\{\alpha\}\{\aga\} A}$}
\LeftLabel{\scriptsize{$swap\textrm{-}in'$}}
\UIC{$\{\alpha\} [\aga] A \fCenter \Phi_\alpha> \{\aga\} \{\beta_1\} A$}
\UIC{$\Phi_\alpha\,; \{\alpha\} [\aga] A \fCenter \{\aga\} \{\beta_1\} A$}
\LeftLabel{\scriptsize{$reduce'$}}
\UIC{$\{\alpha\} [\aga] A \fCenter \{\aga\} \{\beta_1\} A$}
\UIC{$\{\alpha\} [\aga] A \fCenter \{\aga\} \{\beta_1\} A$}
\UIC{$\RESagaProxy \{\alpha\} [\aga] A \fCenter \{\beta_1\} A$}
\UIC{$\RESagaProxy \{\alpha\} [\aga] A \fCenter \ls\beta_1\rs A$}
\UIC{$\{\alpha\} [\aga] A \fCenter \{\aga\} \ls\beta_1\rs A$}
\UIC{$\{\alpha\} [\aga] A \fCenter [\aga] \ls\beta_1\rs A$}


\def\fCenter{\mbox{$\cdots$}}
\AX$\phantom{\{_k} \fCenter \phantom{\{_k}$
\noLine
\UI$\phantom{\{_k} \fCenter \phantom{\{_k}$
\noLine
\UI$\phantom{\{_k} \fCenter \phantom{\{_k}$
\noLine
\UI$\phantom{\{_k} \fCenter \phantom{\{_k}$
\noLine
\UI$\phantom{\{_k} \fCenter \phantom{\{_k}$
\noLine
\UI$\phantom{\{_k} \fCenter \phantom{\{_k}$
\noLine
\UI$\phantom{\{_k} \fCenter \phantom{\{_k}$
\noLine
\UI$\phantom{\{_k} \fCenter \phantom{\{_k}$
\noLine
\UI$\phantom{\{_k} \fCenter \phantom{\{_k}$
\noLine
\UI$\phantom{\{_k} \fCenter \phantom{\{_k}$
\noLine
\UI$\phantom{\{_k} \fCenter \phantom{\{_k}$
\noLine


\def\fCenter{\mbox{$\ \vdash\ $}}
\AX$A \fCenter A$
\UIC{$[\aga] A \fCenter \{ \aga \} A$}
\RightLabel{\scriptsize{$balance$}}
\UIC{$\{\alpha\} [\aga] A \fCenter {\{\alpha\}\{\aga\} A}$}
\RightLabel{\scriptsize{$swap\textrm{-}in'$}}
\UIC{$\{\alpha\} [\aga] A \fCenter \Phi_\alpha> \{\aga\} \{\beta_{n}\} A$}
\UIC{$\Phi_\alpha\,; \{\alpha\} [\aga] A \fCenter \{\aga\} \{\beta_{n}\} A$}
\RightLabel{\scriptsize{$reduce'$}}
\UIC{$\{\alpha\} [\aga] A \fCenter \{\aga\} \{\beta_{n}\} A$}
\UIC{$\{\alpha\} [\aga] A \fCenter \{\aga\} \{\beta_{n}\} A$}
\UIC{$\RESagaProxy \{\alpha\} [\aga] A \fCenter \{\beta_n\} A$}
\UIC{$\RESagaProxy \{\alpha\} [\aga] A \fCenter \ls\beta_n\rs A$}
\UIC{$\{\alpha\} [\aga] A \fCenter \{\aga\} \ls\beta_n\rs A$}
\UIC{$\{\alpha\} [\aga] A \fCenter [\aga] \ls\beta_n\rs A$}


\dashedLine
\TI$\Bigsemic\Big(\{\alpha\} [\aga] A\Big) \fCenter \bigwedge\Big([\aga] \ls\beta_i\rs A\Big)$
\dashedLine
\UI$\{\alpha\} [\aga] A \fCenter \bigwedge\Big([\aga] \ls\beta_i\rs A\Big)$
\BI$\Phi_\alpha\,; \{\alpha\} [\aga] A \fCenter 1_\alpha \pand \bigwedge\Big([\aga] \ls\beta_i\rs A\Big)$
\LeftLabel{\scriptsize{$reduce'$}}
\UI$\{\alpha\} [\aga] A \fCenter 1_\alpha \pand \bigwedge\Big([\aga] \ls\beta_i\rs A\Big)$
\UI$\lc\alpha\rc [\aga] A \fCenter 1_\alpha \pand \bigwedge\Big([\aga] \ls\beta_i\rs A\Big)$
\DisplayProof
\end{tabular}
}}
\end{center}

\noindent
$\rule{126.2mm}{0.5pt}$
$Pre(\alpha) \pand \bigwedge \{[\aga] \ls\beta\rs A\,|\, \alpha\aga\beta\} \vdash \lc\alpha\rc [\aga] A$

\begin{center}
{\scriptsize{
\begin{tabular}{@{}c@{}}
\AX$A \fCenter A$
\UI$\ls\beta_1\rs A \fCenter \{\beta_1\} A$
\UI$[\aga] \ls\beta_1\rs A \fCenter \{\aga\} \{\beta_1\} A$

\def\fCenter{{\mbox{$\cdots$}}}
\AX${\phantom{A}}\fCenter{\phantom{A}}$
\noLine
\UI${\phantom{\{_{1}}}\fCenter{\phantom{\{_{1}}}$
\noLine
\UI${\phantom{\{_{1}}}\fCenter{\phantom{\{_{1}}}$
\noLine

\def\fCenter{{\mbox{$\ \vdash\ $}}}
\AX$A \fCenter A$
\UI$\ls\beta_{n}\rs A \fCenter \{\beta_{n}\} A$
\UI$[\aga] \ls\beta_{n}\rs A \fCenter \{\aga\} \{\beta_{n}\} A$

\RightLabel{\scriptsize{$swap\textrm{-}out'$}}
\TI$\Bigsemic\Big([\aga] \ls\beta_i\rs A\Big) \fCenter {\{\alpha\}\{ \aga \} A}$
\dashedLine
\UI$\bigwedge\Big([\aga] \ls\beta_i\rs A\Big) \fCenter {\{\alpha\}\{\aga\} A}$
\UI$\RESalphaProxy \bigwedge \Big([\aga] \ls\beta_i\rs A\Big) \fCenter \{\aga\} A$
\UI$\RESalphaProxy \bigwedge \Big([\aga] \ls\beta_i\rs A\Big) \fCenter [\aga] A$
\UI$\{\alpha \}\RESalphaProxy \bigwedge \Big([\aga] \ls\beta_i\rs A \Big) \fCenter \lc \alpha \rc[\aga] A$
\UI$\Phi_\alpha\,;\bigwedge \Big([\aga] \ls\beta_i\rs A\Big) \fCenter \lc\alpha\rc [\aga] A$
\UI$\bigwedge \Big([\aga] \ls\beta_i\rs A\Big)\,; \Phi_\alpha\,\fCenter \lc\alpha\rc [\aga] A$
\UI$\Phi_\alpha\,\fCenter \bigwedge \Big([\aga] \ls\beta_i\rs A\Big) > \lc\alpha\rc [\aga] A$
\UI$1_\alpha\,\fCenter \bigwedge \Big([\aga] \ls\beta_i\rs A\Big) > \lc\alpha\rc [\aga] A$
\UI$\bigwedge \Big([\aga] \ls\beta_i\rs A\Big)\,; 1_\alpha\,\fCenter \lc\alpha\rc [\aga] A$
\UI$1_\alpha\,; \bigwedge \Big([\aga] \ls\beta_i\rs A\Big) \fCenter \lc\alpha\rc [\aga] A$
\UI$1_\alpha\pand \bigwedge \Big([\aga] \ls\beta_i\rs A\Big) \fCenter \lc\alpha\rc [\aga] A$
\DisplayProof
\end{tabular}
}}
\end{center}

\bibliography{BIB,final-coalgebra}

\begin{thebibliography}{10}

\bibitem{Abramsky}
Samson Abramsky and Steven Vickers.
\newblock Quantales, observational logic and process semantics.
\newblock {\em Mathematical Structures in Computer Science}, 3:161--227, 6
  1993.

\bibitem{aczel:nwfs}
Peter Aczel.
\newblock {\em Non-Well-Founded Sets}.
\newblock CSLI, Stanford, 1988.

\bibitem{AczelM}
Peter Aczel and Nax~Paul Mendler.
\newblock A final coalgebra theorem.
\newblock In {\em Category Theory and Computer Science}, pages 357--365, 1989.

\bibitem{AdamekMV:classes}
Jir\'{\i} Ad{\'a}mek, Stefan Milius, and Jiri Velebil.
\newblock On coalgebra based on classes.
\newblock {\em Theoretical Computer ScienceComput. Sci.}, 316(1):3--23, 2004.

\bibitem{AdamekMV:general}
Jir\'{\i} Ad{\'a}mek, Stefan Milius, and Jiri Velebil.
\newblock A general final coalgebra theorem.
\newblock {\em Mathematical Structures in Computer Science}, 15(3):409--432,
  2005.

\bibitem{Aucher1}
Guillaume Aucher.
\newblock Characterizing updates in dynamic epistemic logic.
\newblock In {\em Knowledge Representation and Reasoning}, 2010.

\bibitem{Aucher2}
Guillaume Aucher.
\newblock {DEL}-sequents for progression.
\newblock {\em Journal of Applied Non-Classical Logics}, 21(3-4):289--321,
  2011.

\bibitem{Aucher}
Guillaume Aucher, Bastien Maubert, and Fran\c{c}ois Schwarzentruber.
\newblock Generalized del-sequents.
\newblock In {\em Logics in Artificial Intelligence}, volume 7519 of {\em
  Lecture Notes in Computer Science}, pages 54--66. Springer, 2012.

\bibitem{Balbiani}
Philippe Balbiani, Hans van Ditmarsch, Andreas Herzig, and Tiago de~Lima.
\newblock Tableaux for public announcement logic.
\newblock {\em Journal of Logic and Computation}, 20(1):55--76, 2010.

\bibitem{Ba03}
Alexandru Baltag.
\newblock A coalgebraic semantics for epistemic programs.
\newblock {\em Electr. Notes Theor. Comput. Sci.}, 82(1):17--38, 2003.

\bibitem{Alexandru}
Alexandru Baltag, Bob Coecke, and Mehrnoosh Sadrzadeh.
\newblock Epistemic actions as resources.
\newblock {\em J. Log. Comput.}, 17(3):555--585, 2007.

\bibitem{BMS}
Alexandru Baltag, Lawrence~S. Moss, and Slawomir Solecki.
\newblock The logic of public announcements, common knowledge and private
  suspicious.
\newblock Technical Report SEN-R9922, CWI, Amsterdam, 1999.

\bibitem{barr:terminal}
Michael Barr.
\newblock Terminal coalgebras in well-founded set theory.
\newblock {\em Theoretical Computer Science}, 114(2):299--315, 1993.
\newblock See also Theoretical Computer Science 124(1):189-192, 1994.

\bibitem{BatFagSam}
Giulia Battilotti, Claudia Faggian, and Giovanni Sambin.
\newblock Basic logic: Reflection, symmetry, visibility.
\newblock {\em Journal of Symbolic Logic}, 65, 2000.

\bibitem{Belnap}
Nuel Belnap.
\newblock Display logic.
\newblock {\em Journal of Philosophical Logic}, 11:375--417, 1982.

\bibitem{Be2}
Nuel Belnap.
\newblock Linear logic displayed.
\newblock {\em Notre Dame Journal of Formal Logic}, 31(1):14--25, 1990.

\bibitem{CS07}
Corina C\^{\i}rstea and Mehrnoosh Sadrzadeh.
\newblock Coalgebraic epistemic update without change of model.
\newblock In {\em CALCO'07}, pages 158--172, 2007.

\bibitem{Curry}
Haskell~B.\ Curry.
\newblock {\em Foundations of Mathematical Logic}.
\newblock McGraw-Hill, New York, 1963.

\bibitem{Dyckhoff}
Roy Dyckhoff, Mehrnoosh Sadrzadeh, and Julien Truffaut.
\newblock Algebra, proof theory and applications for an intuitionistic logic of
  propositions, actions and adjoint modal operators.
\newblock {\em ACM Transactions on Computational Logic}, 14(4), 2013.

\bibitem{SabGre}
Sabine Frittella and Giuseppe Greco.
\newblock Display-type sequent calculus for monotone modal logic.
\newblock {\em Advances in Modal Logic (2014), Groningen. Short presentation},
  2014.

\bibitem{GL}
Sabine Frittella, Giuseppe Greco, Alexander Kurz, and Alessandra Palmigiano.
\newblock A multi-type display calculus for game logic.
\newblock {\em In preparation}.

\bibitem{PDL}
Sabine Frittella, Giuseppe Greco, Alexander Kurz, and Alessandra Palmigiano.
\newblock Multi-type display calculus for propositional dynamic logic.
\newblock {\em Journal of Logic and Computation - Special Issue on
  Substructural Logic and Information Dynamics. Forthcoming.}, 2014.

\bibitem{Multitype}
Sabine Frittella, Giuseppe Greco, Alexander Kurz, Alessandra Palmigiano, and
  Vlasta Sikimi\'{c}.
\newblock A multi-type display calculus for dynamic epistemic logic.
\newblock {\em Journal of Logic and Computation - Special Issue on
  Substructural Logic and Information Dynamics. Forthcoming.}, 2014.

\bibitem{TrendsXIII}
Sabine Frittella, Giuseppe Greco, Alexander Kurz, Alessandra Palmigiano, and
  Vlasta Sikimi\'{c}.
\newblock Multi-type sequent calculi.
\newblock In Michal~Zawidzki Andrzej~Indrzejczak, Janusz~Kaczmarek, editor,
  {\em Trends in Logic XIII}, pages 81--93. Lod\'{z} University Press, 2014.

\bibitem{Gentzen}
Gerhard Gentzen.
\newblock {\em The collected papers of Gerhard Gentzen / edited by M. E.
  Szabo}.
\newblock North-Holland Pub. Co Amsterdam, 1969.

\bibitem{GerbrandyPhD}
Jelle Gerbrandy.
\newblock {\em Bisimulations on Planet Kripke}.
\newblock PhD thesis, ILLC, University of Amsterdam, 1999.

\bibitem{GerbrandyG97}
Jelle Gerbrandy and Willem Groeneveld.
\newblock Reasoning about information change.
\newblock {\em Journal of Logic, Language and Information}, 6(2):147--169,
  1997.

\bibitem{Gore1}
Rajeev Gor\'e.
\newblock Substructural logics on display.
\newblock {\em Logic Journal of IGPL}, 6(3):451--504, 1998.

\bibitem{Gore}
Rajeev Gor\'e.
\newblock Dual intuitionistic logic revisited.
\newblock In Roy Dyckhoff, editor, {\em Automated Reasoning with Analytic
  Tableaux and Related Methods}, volume 1847 of {\em Lecture Notes in Computer
  Science}, pages 252--267. Springer, 2000.

\bibitem{GKPLori}
Giuseppe Greco, Alexander Kurz, and Alessandra Palmigiano.
\newblock Dynamic epistemic logic displayed.
\newblock In Huaxin Huang, Davide Grossi, and Olivier Roy, editors, {\em
  Proceedings of the 4th International Workshop on Logic, Rationality and
  Interaction (LORI-4)}, volume 8196 of {\em LNCS}, 2013.

\bibitem{Kozen}
David Harel, Dexter Kozen, and Jerzy Tiuryn.
\newblock {\em Dynamic Logic}.
\newblock MIT Press, Cambridge, MA, 2000.

\bibitem{Kracht}
Marcus Kracht.
\newblock Power and weakness of the modal display calculus.
\newblock In {\em Proof theory of modal logic}, pages 93--121. Kluwer, 1996.

\bibitem{Kurz01}
Alexander Kurz.
\newblock Modal rules are co-implications.
\newblock In {\em CMCS}, volume~44 of {\em Electronic Notes in Theoretical
  Computer Science}, pages 241--253, 2001.

\bibitem{Kurz02}
Alexander Kurz.
\newblock Logics admitting final semantics.
\newblock In {\em FoSSaCS}, volume 2303 of {\em Lecture Notes in Computer
  Science}, pages 238--249, 2002.

\bibitem{KP}
Alexander Kurz and Alessandra Palmigiano.
\newblock Epistemic updates on algebras.
\newblock {\em Logical Methods in ComputerScience}, 2013.
\newblock arXiv:1307.0417.

\bibitem{KP13}
Alexander Kurz and Alessandra Palmigiano.
\newblock Epistemic updates on algebras.
\newblock {\em Logical Methods in Computer Science}, 2013.
\newblock abs/1307.0417.

\bibitem{KurzR05}
Alexander Kurz and Jir\'{\i} Rosick{\'y}.
\newblock Operations and equations for coalgebras.
\newblock {\em Mathematical Structures in Computer Science}, 15(1):149--166,
  2005.

\bibitem{Luz}
Carsten Lutz.
\newblock Complexity and succinctness of public announcement logic.
\newblock {\em AAMAS '06 Proceedings or the fifth international joint
  conference on Autonomous agents and multiagent system}, pages 137--143, 2006.

\bibitem{AMM}
Minghui Ma, Alessandra Palmigiano, and Mehrnoosh Sadrzadeh.
\newblock Algebraic semantics and model completeness for intuitionistic public
  announcement logic.
\newblock {\em Annals of Pure and Applied Logic}, 2013.
\newblock to appear.

\bibitem{Moortgat95}
Michael Moortgat.
\newblock Multimodal linguistic inference.
\newblock {\em Logic Journal of the IGPL}, 3(2-3):371--401, 1995.

\bibitem{NegriM2}
Sara Negri and Paolo Maffezioli.
\newblock A proof theoretical perspective on public announcement logic.
\newblock {\em Logic and Philosophy of Science}, 9:49--59, 2011.

\bibitem{NegriM1}
Sara Negri and Paolo Maffezzoli.
\newblock A {G}entzen-style analysis of public announcement logic.
\newblock pages 293--313. University of the Basque Country Press, 2010.

\bibitem{Parikh}
Rohit Parikh.
\newblock The logic of games and its applications.
\newblock In {\em Annals of Discrete Mathematics}, pages 111--140. Elsevier,
  1985.

\bibitem{Plaza}
Jan Plaza.
\newblock Logics of public communications.
\newblock {\em Synthese}, 158(2):165--179, 2007.

\bibitem{Restall}
Greg Restall.
\newblock {\em An Introduction to Substructural Logics}.
\newblock Routledge, London, 2000.

\bibitem{Sch06}
Peter Schroeder-Heister.
\newblock Validity concepts in proof-theoretic semantics.
\newblock In R.~Kale and Peter Schroeder-Heister, editors, {\em Proof-Theoretic
  Semantics}, volume 148, pages 525--571. 2006.

\bibitem{Peter}
Peter Schroeder-Heister.
\newblock Proof-theoretic semantics.
\newblock In Edward~N. Zalta, editor, {\em The Stanford Encyclopedia of
  Philosophy}. Spring 2013 edition, 2013.

\bibitem{Ditmarsch1}
Hans~P. van Ditmarsch, Wiebe van~der Hoek, and Barteld Kooi.
\newblock {\em Dynamic Epistemic Logic}.
\newblock Springer, 2007.

\bibitem{Wansing}
Heinrich Wansing.
\newblock {\em Displaying Modal Logic}.
\newblock Kluwer, Dordrecht, 1998.

\bibitem{Wa00}
Heinrich Wansing.
\newblock The idea of a proof-theoretic semantics and the meaning of the
  logical operations.
\newblock {\em Studia Logica}, 64(1):3--20, 2000.

\bibitem{Wan02}
Heinrich Wansing.
\newblock Sequent systems for modal logics.
\newblock {\em Handbook of Philosophical Logic}, 8:61--45, 2002.

\end{thebibliography}
\bibliographystyle{plain}

\end{document}